\newcommand{\algmargin}{\the\ALG@thistlm}
\newlength{\whilewidth}
\algnewcommand{\parState}[1]{\State%
        \parbox[t]{\dimexpr\linewidth-\algmargin}{\strut #1\strut}}
\def\alglongline{\noindent\rule{\textwidth}{0.6pt}}
\numberwithin{equation}{section}
\newtheorem{thm}{Theorem}[section]
\newtheorem{lemmaa}[thm]{\textbf{Lemma}}
\newtheorem{rulee}[thm]{\textbf{Rule}}
\newtheorem{rem}{Remark}
\newtheorem{algorithm_}{Algorithm}[section]
\def\p{\partial}
\def\DD{\displaystyle}
\def\R{\mathbb{R}}
\def\ii{\mathbf{i}}
\def\l{\,l}
\def\bx{{\bf x}}
\def\by{{\bf y}}
\def\bz{{\bf z}}
\def\f{\mbox{field}}
\def\a{\alpha}
\def\B{B}
\def\d{d}                       
\def\P{\mathcal{P}}
\def\L{{\mathcal{L}}}
\def\Nbx{N_1}
\def\Nby{N_2}
\def\Nbz{N_3}
\def\MaxN2{\max\{\Nbx, \Nby\}}
\def\MaxN3{\max\{\Nbx, \Nby, \Nbz\}}
\def\f32{\frac{3}{2}}
\def\s1{{s-1}}
\def\s2{{s-2}}
\def\d{\boldsymbol{d}}
\def\n{\mathbf{n}}
\def\betaH{\mu}
\def\dirThreeD{\Box, \vartriangle, \ocircle}
\def\rangeTwoDRow{i = 1,\ldots,\Nbx \\ j = 1,\ldots,\Nby}
\def\rangeThreeDRow{i = 1,\ldots,\Nbx \\ j = 1,\ldots,\Nby \\ k = 1,\ldots,\Nbz}
\DeclareMathOperator\supp{supp}
\DeclareMathOperator*{\argmin}{arg\,min}
\def\plusd{}
\def\minusd{}
\def\Potential{\Psi}
\def\PotentialVec{\Phi}
\def\eps{{\varepsilon}}
  \def\kdown{\kappa_{\downarrow}}
  \def\kup{\kappa_{\uparrow}}
\def\uprightarrow{
   \uparrow\hspace{-2pt}\raisebox{-1pt}{$_\rightarrow$}   
}
\def\abbrLeft{\text{Lf}}
\def\abbrPred{\text{Dr}}
\def\abbrRefl{\text{Rf}}
\def\quadOne{\Omega^{+;+}}
\def\quadTwo{\Omega^{-;+}}
\def\quadThree{\Omega^{-;-}}
\def\quadFour{\Omega^{+;-}}
\def\OmegaXminus{\Omega_{x_1}^-}
\def\OmegaXplus{\Omega_{x_1}^+}
\def\OmegaYminus{\Omega_{x_2}^-}
\def\OmegaYplus{\Omega_{x_2}^+}
\newcommand{\mcol}[1]{\multicolumn{1}{|c|}{#1}}
\newcommand{\mrow}[1]{\multirow{2}{*}{#1}}
\newcommand{\mrowcol}[1]{\multicolumn{1}{|c|}{\multirow{2}{*}{#1}}}   
\def\Tsolve{T_{\text{slv}}}
\journal{}
\begin{document}

\begin{frontmatter}

\title{Trace Transfer-based Diagonal Sweeping Domain Decomposition Method for \\the Helmholtz Equation: Algorithms and Convergence Analysis}

\author[lsec]{Wei Leng\fnref{hjgfootnote}}
\ead{wleng@lsec.cc.ac.cn}
\address[lsec]{State Key Laboratory of Scientific and Engineering Computing, Chinese Academy
  of Sciences, Beijing 100190, China}
  \fntext[hjgfootnote]{W. Leng's research is partially supported by 
National Key R\&D Program of China under grant number 2020YFA0711904,
National Natural Science Foundation of China under grant number 11771440,
and National Center for Mathematics and Interdisciplinary Sciences of Chinese Academy of Sciences (NCMIS).}
\cortext[mycorrespondingauthor]{Corresponding author}
\author[usc]{Lili Ju\corref{mycorrespondingauthor}\fnref{jllfootnote}}
\address[usc]{Department of Mathematics, University of  South Carolina, Columbia, SC 29208, USA}
\fntext[jllfootnote]{L. Ju's research is partially supported by US National Science Foundation under grant number DMS-1818438.}
\ead{ju@math.sc.edu}

\begin{abstract}
By utilizing the perfectly matched layer (PML) and source transfer techniques,
the diagonal sweeping domain decomposition method (DDM)  was recently developed for solving the high-frequency Helmholtz equation in $\R^n$,  which uses $2^n$ sweeps along respective diagonal directions with  checkerboard domain decomposition. Although this diagonal sweeping DDM is essentially multiplicative,  it is highly suitable for parallel computing of the Helmholtz problem with multiple right-hand sides when combined with the pipeline processing since the number of sequential steps in each sweep is much smaller than the number of subdomains. In this paper, we propose and analyze a trace transfer-based diagonal sweeping DDM. A major advantage of changing from source transfer to trace transfer for information passing between neighbor subdomains is that the resulting diagonal sweeps become  easier to analyze and implement and more efficient, since the transferred traces have only $2n$ cardinal directions between neighbor subdomains while the transferred sources come from a total of $3^n-1$ cardinal and corner directions. We rigorously prove that the proposed diagonal sweeping DDM not only gives the exact solution of the global PML problem  in the constant medium case but also does it with  at most one extra round of diagonal sweeps in the two-layered media case, which lays down the theoretical foundation of the method. Performance and parallel scalability of the proposed DDM as direct solver or preconditioner are also numerically demonstrated through extensive experiments in two and three dimensions.

\end{abstract}


\begin{keyword}
Domain decomposition method, diagonal sweeping, Helmholtz equation, perfectly matched layer, trace transfer, parallel computing
\end{keyword}

\end{frontmatter}


\newif\ifdraftFig

\draftFigtrue

\section{Introduction}

In this paper, we consider the  Helmholtz equation in $\R^n$ ($n=2,3$) with the Sommerfeld radiation condition: \\
\begin{align} 
  \Delta u + \kappa^2 u &= f ,\qquad  \mbox{in} \;\;\; \R^n \label{eq:helm}\\
  r^{\frac{n-1}{2}}\Big(\frac{\p u}{\p r} - \mathbf{i} \kappa u\Big) &\rightarrow 0, \qquad \mbox{as} \;\;\; r = |\bx| \rightarrow \infty, \label{eq:radiation}
\end{align}
where $f$ is the source and $\kappa(\bx)$ is the wave number  defined by $\kappa(\bx) := \frac{\omega}{c(\bx)} $ with $\omega$ denoting the angular frequency and $c(\bx)$ the wave speed.
The Helmholtz equation has  practical applications in diverse areas, such as acoustics, elasticity, electromagnetism and geophysics, in which  the computational costs mainly  come from  solving the  Helmholtz equation numerically. It is a challenging  task to design   efficient and robust solvers for the Helmholtz problem with large wave number $\kappa(\bx)$, since in this situation  the resulting discrete system is  highly indefinite and the Green's function of the Helmholtz operator is very oscillatory \cite{Engquist2016}. Many numerical methods have been proposed to solve  the Helmholtz problem   \eqref{eq:helm}- \eqref{eq:radiation}, including the direct methods \cite{Frontal1, George1973} with the sparsity of the coefficient matrix being exploited \cite{Xia2010}, 
the multigrid methods with the shifted Laplace used as the preconditioner \cite{Erlangga2006, Erlangga2006b,Erlangga2008,Umetani2009,Sheikh2013,Aruliah2002},  and the domain decomposition methods (DDM) with different types of transmission conditions \cite{Despres1990, Collino2000, SchwarzBdry1, Douglas1998,Boubendir2012, Schadle2007, Toselli1999}.  Our work in this paper is devoted to development and analysis of a new effective  and efficient DDM  for solving the high-frequency Helmholtz equation.

The sweeping type DDM was first proposed by Engquist and Ying in \cite{Engquist2011a, Engquist2011b}, and then further developed in \cite{Stolk2013, Vion2014, Zepeda2014, Chen2013a, GanderReview}.
The sweeping type DDMs are quite effective for solving the Helmholtz equation due to two ingredients:  one is the employment of  the perfectly matched layer (PML)  boundary condition on  subdomains, the other is the  transmission condition between neighbor subdomains. The latter one is also the main difference among existing sweeping type DDM  approaches. In most of them, the domain is only partitioned into layers along a single direction, and the layered subdomain problems are then solved one after another through the forward and backward sweeps.  The subdomain problems are preferred to be solved with some direct methods, which can greatly reduce the computational cost in sweeps. However, the factorization processes  for subdomain problems in the direct methods are often computationally expensive, particularly for 3D problems.
To overcome such difficulty by the one-directional domain partition in the sweeping type DDMs, the structured domain decomposition along all spatial directions (i.e., checkerboard domain decomposition) naturally comes to consideration. One obvious way to build the sweeping DDM for the checkerboard partition is to use recursion as done in \cite{Liu2015b, Wu2015}, however, the number of sequential steps in  each of such recursive sweeps are proportional to the number of subdomains, thus still not practical for real large-scale applications.

Inspired by the source trace method \cite{Chen2013a}, the corner transfer for the checkerboard domain decomposition was introduced for the first time to design the additive overlapping DDM in \cite{Leng2019}.
Another DDM based on the corner transfer is the ``L-sweeps'' method recently proposed in \cite{Zepeda2020},
in which the sweeps of $3^n-1$ directions (cardinal and corner) with trace transfer are performed to construct the total solution, and the number of sequential steps in each sweep is reduced to be proportional to the $n$-th root of the number of subdomains. Thus this  method becomes very attractive for solving in parallel  high-frequency Helmholtz problems with multiple right-hand sides using the pipelining processing.
One of the major differences between the additive overlapping DDM \cite{Leng2019} and the L-sweeps method \cite{Zepeda2020} is the subdomain solving order. 
More recently, the diagonal sweeping DDM with source transfer was developed in \cite{Leng2020} based on  further re-arranging and improving the subdomain solving order,  in which  only $2^n$  sweeps of diagonal directions are needed and  the reflections in the media is also handled in a more proper way.

In this paper, we design and analyze a new diagonal sweeping DDM by changing the information passing  between neighbor subdomains for diagonal sweeping from source transfer to trace transfer.  Consequently, the proposed DDM is easier to analyze and implement and more efficient since the transferred traces have only $2n$ cardinal directions, while the transferred sources are from $3^n-1$ cardinal and corner directions. 
Moreover,  the overlapping of subdomains in domain decomposition becomes only optional while it is essential for the source transfer approach.   We rigorously prove that the DDM solution is indeed the solution of the Helmholtz problem in the case of constant medium, and such result  also holds with at most one extra round of diagonal sweeps in the  two-layered media case. 
The rest of the paper is organized as follows.  In Section 2, the perfectly matched layer  and trace transfer techniques are first introduced with some useful analysis results,  and  the trace transfer-based diagonal sweeping DDM with is then proposed for solving the Helmholtz equation in $\R^2$. Convergence analysis for the proposed method in the constant medium and two-layered media cases is then presented  in Section  3.
The extension of the method to the Helmholtz equation in $\R^3$ is given and discussed in Section 4.  Through extensive numerical experiments in two and three dimensions, we verify the convergence of the proposed method and demonstrate its efficiency and parallel scalability in Section 5. Finally some concluding remarks are drawn in Section 6.

\section{Trace transfer-based diagonal sweeping DDM in $\R^2$}
In this section, we first recall  the perfectly matched layer formulation for the Helmholtz problem \eqref{eq:helm}- \eqref{eq:radiation} in $\R^2$, then introduce the trace transfer technique with related lemmas for algorithm design and analysis,  and finally present the trace transfer-based diagonal sweeping DDM in $\R^2$.

\subsection{Perfectly matched layer}

Suppose that the source $f$ is compactly supported in a rectangular box in $\R^2$, defined by $\B = \{\bx=(x_1,x_2) \;|\; a_j \le x_j \le b_j,\, j = 1,2\}$, then the Sommerfeld radiation condition \eqref{eq:radiation} in $\R^2$ can be replaced by the PML boundary outside the box $B$ and the solution to the Helmholtz equation \eqref{eq:helm}   inside the box $\B$ correspondingly can  be obtained by solving the resulting PML problem.
We adopt the uniaxial PML \cite{Berenger, Chew1994, Kim2010, Bramble2013, Chen2013a} and use the PML medium profile defined in the following.
Let $\{\sigma_j\}_{j=1}^2$ be piecewise smooth functions such that
\begin{equation} \label{eq:sigma}
\sigma_j(\bx) = \left\{  
\begin{array}{ll}
\widehat{\sigma}(x_j - b_j), & \text{if} \,\,\, b_j \le x_j,\\
0,                         & \text{if} \,\,\, a_j < x_j < b_j,\\
\widehat{\sigma}(a_j - x_j), & \text{if} \,\,\, x_j \le a_j,\\
\end{array}
\right. 
\end{equation}
where $\widehat{\sigma}(t)$ is a smooth medium profile function such as \cite{Chen2010,Chen2013a,Chen2017,Zheng2020},
then the complex coordinate stretching $\widetilde{\bx}(\bx)$  is defined as
\begin{equation}
\tilde{x}_j(x_j) = c_j+\int_{c_j}^{x_j} \alpha_j(t)\, dt = x_j
+ \ii \int_{c_j}^{x_j} \sigma_j(t) \,dt,\qquad j = 1,2,
\end{equation}
where $c_j = \frac{a_j+b_j}{2}$, for $j = 1,2$, and $\alpha_1(x_1) = 1 + \ii \sigma_1(x_1) $,
$\alpha_2(x_2) = 1 + \ii \sigma_2(x_2) $.

The PML equation  derived from \eqref{eq:helm}  by using the chain rule then can be written as follows:
\begin{equation} \label{eq:PML}
J_{\B}^{-1} \nabla \cdot (A_{\B} \nabla\tilde{u}) + \kappa^2 \tilde{u} = f, \qquad \text{in} \,\, \R^2,
\end{equation}
where $\tilde{u}$ is called the PML solution and
$$\DD A_{\B}(\bx) = \mbox{diag}\left(\frac{\a_2(x_2)}{\a_1(x_1)}, \frac{\a_1(x_1)}{\a_2(x_2)}\right),\quad  J_{\B}(\bx) = \a_1(x_1) \a_2(x_2).$$
It has been shown in \cite{Chen2010,Chen2013a} that the PML equation \eqref{eq:PML} is well-posed and its solution decays exponentially outside the box for both the constant medium and the two-layered media cases.
From now on,   the wave number $\kappa(\bx)$ is assumed to be either constant or two-layered  except stated otherwise. For simplicity,  we also assume that the media interface does not coincide with the subdomain boundaries in the two-layered media case, .

Let us denote the PML problem \eqref{eq:PML} associated with the box  $\B$ and the wave number  $\kappa$ as  $\P_{B,\kappa}$ and the corresponding  linear operator as $$\L_{\B,\kappa}=J_{\B}^{-1} \nabla \cdot (A_{\B} \nabla \, \boldsymbol{\cdot}) + \kappa^2.$$  Denote the fundamental solution of the problem $\P_{\B,\kappa}$ as $G_{B,\kappa}(\bx, \by)$, which satisfies $\L_{B,\kappa} G_{B,\kappa}(\bx, \by) = -\delta_{\by}(\bx)$ for any $\bx\in\R^2$ (since the Helmholtz operator is usually defined as $-\Delta-\kappa^2$). It has been proved in \cite{Lassas2001} that 
\begin{align}
  \label{eq:fund_sol_sym}
\DD G_{B,\kappa}(\by,\bx) := \frac{J_B(\bx)}{J_B(\by)} G_{B,\kappa}(\bx,\by)
\end{align}
is the fundamental solution to the adjoint PML equation.
Following Lemma 2.1 and 4.5 of \cite{Chen2017}, it is easy to obtain the following result with  similar derivations. 
 
\begin{lemmaa} \label{lemma:singularity}
  For continuous $G_{B,\kappa}(\bx,\by)$, $\p_{y_i} G_{B,\kappa}(\bx, \by)$ and $\p^2_{y_i} G_{B,\kappa}(\bx, \by)$, $i=1,2$,
  there exists a constant $C$ depending only on the rectangular box $B$ and the wave number $\kappa$ such that for any $\bx$ and $\by$, it holds that
  \begin{align}
    G_{B,\kappa}(\bx, \by) &\leq C \left(\frac{1}{|\bx - \by|^{1/2}} + \frac{1}{|\bx - \by'|^{1/2}} + 1 \right), \\
    \p_{y_i} G_{B,\kappa}(\bx, \by) &\leq C \left(\frac{1}{|\bx - \by|^{3/2}} + \frac{1}{|\bx - \by'|^{3/2}}  + 1 \right),\qquad i=1,2, \\
    \p^2_{y_i} G_{B,\kappa}(\bx, \by) &\leq C \left(\frac{1}{|\bx - \by|^{5/2}} + \frac{1}{|\bx - \by'|^{5/2}}  + 1 \right),\qquad i=1,2,
  \end{align}
  where $\by'$ is  identical to $\by$ in the constant medium case or the image point of $\by$ with respect to the media interface  in the two-layered media case.
\end{lemmaa}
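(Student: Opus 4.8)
The plan is to reduce both sets of inequalities to classical small- and large-argument asymptotics of the two-dimensional free-space fundamental solution, transported through the complex coordinate stretching $\widetilde{\bx}(\cdot)$, and then to handle the additional ``reflected'' contribution of the two-layered case as a separate term whose singularity is located at the image point $\by'$. This follows the template of Lemma~2.1 and Lemma~4.5 of \cite{Chen2017}, so the work is mainly to adapt those arguments to the present setting.

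First I would treat the constant-medium case. Because the change of variables $\bx\mapsto\widetilde{\bx}(\bx)$ turns $\L_{\B,\kappa}$ into the ordinary Helmholtz operator in the stretched variables, the fundamental solution has the closed form
\[
G_{\B,\kappa}(\bx,\by)=\frac{\ii}{4}\,H_0^{(1)}\!\big(\kappa\,\rho(\bx,\by)\big),\qquad
\rho(\bx,\by)^2=\sum_{j=1}^{2}\big(\tilde x_j(x_j)-\tilde y_j(y_j)\big)^2 ,
\]
where $\rho$ is the analytic branch of the square root that is positive on the diagonal $\bx=\by$ (this is also consistent with \eqref{eq:fund_sol_sym}). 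The next step is a short geometric lemma on $\rho$: since each $\sigma_j\ge 0$, the function $t\mapsto\int_{c_j}^{t}\sigma_j$ is nondecreasing, so $\tilde x_j-\tilde y_j=(x_j-y_j)(1+\ii s_j)$ with $s_j\ge 0$; a brief case analysis then shows that $\rho^2$ stays in the closed upper half-plane, that $|\rho|\gtrsim|\bx-\by|$ uniformly, and that $\operatorname{Im}\rho$ grows as soon as $\bx$ or $\by$ lies in a PML layer. Feeding this into the standard estimates $H_0^{(1)}(z)=O(|\log|z||)$ and $H_1^{(1)}(z)=O(|z|^{-1})$ as $z\to0$ together with $|H_m^{(1)}(z)|\lesssim e^{-\operatorname{Im}z}\,(1+|z|^{-1/2})$ for $|z|$ bounded away from $0$, and differentiating via $\tfrac{d}{dz}H_0^{(1)}=-H_1^{(1)}$, $\tfrac{d}{dz}H_1^{(1)}=H_0^{(1)}-z^{-1}H_1^{(1)}$ and the chain rule $\partial_{y_i}\rho=-(\tilde y_i-\tilde x_i)\,\alpha_i(y_i)/\rho$, one obtains the singular orders $|\bx-\by|^{-1/2}$, $|\bx-\by|^{-3/2}$, $|\bx-\by|^{-5/2}$ for $G_{\B,\kappa}$, $\partial_{y_i}G_{\B,\kappa}$, $\partial_{y_i}^2G_{\B,\kappa}$. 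Since $\B$ is bounded, the crude bounds $|\log r|\le C(r^{-1/2}+1)$, $r^{-1}\le C(r^{-3/2}+1)$, $r^{-2}\le C(r^{-5/2}+1)$ absorb all remaining lower-order and constant terms into the ``$+1$'', while wherever $|\bx-\by|$ is large the factor $e^{-\operatorname{Im}\rho}$ makes the whole expression uniformly bounded; here $\by'=\by$.

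For the two-layered case I would start from the decomposition $G_{\B,\kappa}=G^{\mathrm{dir}}+G^{\mathrm{refl}}$ obtained in the construction of the layered Green's function in \cite{Chen2010,Chen2017} (partial Fourier transform in the variable tangential to the interface, followed by the complex stretching). The direct part $G^{\mathrm{dir}}$ coincides, up to smooth factors, with the constant-medium fundamental solution built from the wave number at the source point, and is estimated exactly as above, with its singularity at $\by$. The reflected part $G^{\mathrm{refl}}$ is smooth except near the image point $\by'$ of $\by$ across the interface; after checking that reflection across the coordinate hyperplane carrying the interface preserves the monotonicity sign of the stretching, so that the above quadrant and lower-bound properties of $\rho$ remain valid with $\by$ replaced by $\by'$, the same Hankel estimates deliver the $|\bx-\by'|^{-1/2}$, $|\bx-\by'|^{-3/2}$, $|\bx-\by'|^{-5/2}$ contributions, and differentiation in $y_i$ is handled as before.

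The main obstacle is the two-layered reflected term: producing a workable closed expression for $G^{\mathrm{refl}}$ in the PML-stretched coordinates, verifying that its singular behaviour is controlled by $|\bx-\by'|$ uniformly (including the case where $\by'$ falls inside the computational box $\B$), and keeping the constant $C$ independent of $(\bx,\by)$ simultaneously in the near-diagonal regime and in the PML-decay regime. All of this is carried out along the lines of Lemmas~2.1 and~4.5 of \cite{Chen2017}, to which we refer for the detailed computations.
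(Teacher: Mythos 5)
Your proposal is correct and follows exactly the route the paper intends: the paper gives no proof of this lemma beyond the remark that it follows ``with similar derivations'' from Lemmas~2.1 and~4.5 of \cite{Chen2017}, and your argument (Hankel-function representation of the PML fundamental solution in the stretched coordinates, lower bounds and upper half-plane location of the complexified distance $\rho$, small- and large-argument asymptotics of $H_0^{(1)}$, $H_1^{(1)}$ with the chain rule for $y$-derivatives, and the direct-plus-reflected decomposition placing the second singularity at the image point $\by'$) is precisely that derivation spelled out. The only thing worth noting is that the stated exponents $1/2,\,3/2,\,5/2$ are deliberately generous, which your ``crude bounds'' step correctly exploits to absorb the logarithmic and $r^{-1}$, $r^{-2}$ near-diagonal singularities.
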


\begin{figure}[htbp]    
        \centering
        \begin{minipage}[t]{0.34\linewidth}
                \vspace{0pt}
                \centering
                \includegraphics[width=1\textwidth]{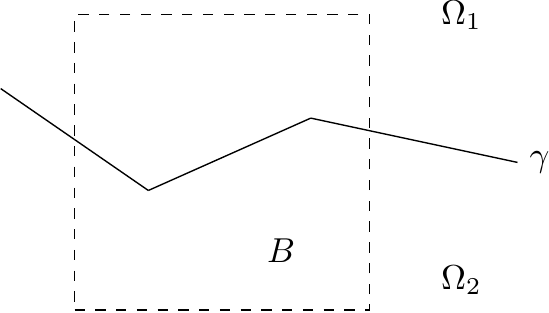}
        \end{minipage}
        \hspace{0.3cm}
        \begin{minipage}[t]{0.28\linewidth}
                \vspace{0pt}
                \centering
                \includegraphics[width=1\textwidth]{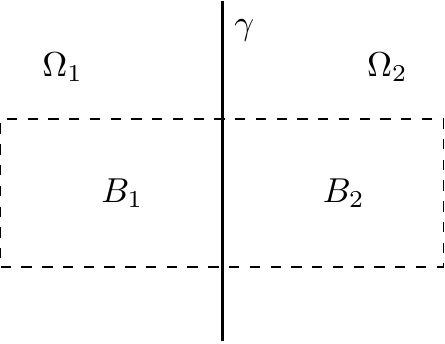}
        \end{minipage}
        \hspace{0.5cm}
        \begin{minipage}[t]{0.26\linewidth}
                \centering
                \vspace{0pt}
                \includegraphics[width=1\textwidth]{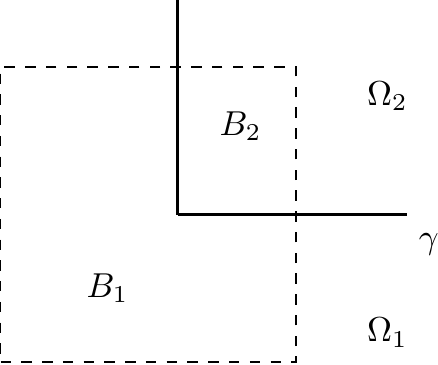}
        \end{minipage}\\
        
        \caption{\it Divide $\R^2$ and the box $B$ with  $\gamma$.
          \label{fig:twopart}}
\end{figure}

\subsection{Trace transfer}

The trace transfer technique \cite{Zepeda2014} used in the polarized trace method is  first stated below. Suppose that a piecewise smooth curve $\gamma$ divides $\R^2$ into two parts $\Omega_1$ and $\Omega_2$, and also divides the rectangular box $\B$ into two parts, as shown in Figure \ref{fig:twopart}-(left).
Define the discontinuous cutoff function $\betaH_{\Omega'}$ associated with a given region $\Omega'$  as 
\begin{align*}
        \betaH_{\Omega'}(\bx) = \left\{  
        \begin{array}{ll}
                1,                          & \text{if} \,\,\, \bx \in \Omega' \setminus \p \Omega',\\
                1/2, & \text{if} \,\,\, \bx \in \p\Omega',\\
                0, & \text{if} \,\,\, \bx \notin \Omega' \cup \p\Omega'.
        \end{array}
        \right. 
\end{align*}
which is the counterpart of the smooth cutoff function in the source transfer technique \cite{Chen2013a,Leng2020}. 
Base on the exponential decay of the solution \cite{Chen2010,Chen2013a} and the fundamental solution to the PML problem,
one can easily get the following result on trace transfer by using integration by parts.

\begin{lemmaa} \label{lemma:trace} 
  Suppose $\supp(f) \subset \Omega_1 \cap \B $.
  Let $u$ be the solution to the problem $\P_{B,\kappa}$ with  the source $f$ (i.e, $\L_{B,\kappa} u = f$ in $\R^2$).
  Let $u_2$ be the  potential using the transferred trace of $u$ on ${\gamma}$, defined by
  \begin{align} \label{eq:trace_potential}
    u_2 (\bx) := \DD \int_{{\gamma}} 
    J_B^{-1} G_{B,\kappa}(\bx,\by) \big(A_B \nabla_{\by} u(\by) \cdot \n \big) 
    - u(\by) \big(A_B\nabla_{\by}(J_B^{-1} G_{B,\kappa} (\bx, \by)) \cdot \n \big)\, d\by,
  \end{align}
  where $\n$ is the unit normal of ${\gamma}$ pointing to $\Omega_2$.  Then we have that
  \begin{align} \label{eq:trace_potential_total}
    u \betaH_{{\Omega}_1} + u_2\mu_{\Omega_2}= u, \quad \text{in} \,\, \R^2.
  \end{align} 
   Moreover, $u_2 = 0$ in ${\Omega}_1\setminus{\gamma} $.
\end{lemmaa}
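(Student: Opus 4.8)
The plan is to derive \eqref{eq:trace_potential_total} from Green's representation formula applied piecewise on the two subdomains $\Omega_1$ and $\Omega_2$, exploiting that the fundamental solution $G_{B,\kappa}$ and the PML solution $u$ both decay exponentially away from the box $B$ (so all boundary terms at infinity vanish). First I would fix $\bx \in \R^2$ and consider two cases, $\bx \in \Omega_2 \setminus \gamma$ and $\bx \in \Omega_1 \setminus \gamma$ (the case $\bx \in \gamma$ follows by the continuity of $G_{B,\kappa}$ established in Lemma \ref{lemma:singularity} together with a limiting argument, or by noting both sides are the $\frac12$-average of the one-sided limits). For $\bx \in \Omega_2\setminus\gamma$, apply Green's second identity (in the form adapted to the PML operator $\L_{B,\kappa} = J_B^{-1}\nabla\cdot(A_B\nabla\,\cdot) + \kappa^2$) to the pair $u(\by)$ and $J_B^{-1}G_{B,\kappa}(\bx,\by)$ over $\Omega_2$: since $\supp(f)\subset\Omega_1\cap B$, we have $\L_{B,\kappa}u = 0$ on $\Omega_2$, while $\L_{B,\kappa,\by}\big(J_B(\by)^{-1}G_{B,\kappa}(\bx,\by)\big) = -\delta_{\bx}(\by)$ because $J_B^{-1}G_{B,\kappa}(\bx,\cdot)$ is exactly the fundamental solution of the adjoint PML equation by \eqref{eq:fund_sol_sym}. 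The interior delta reproduces $u(\bx)$, the contribution from $\p B$ or infinity vanishes by exponential decay, and what remains is precisely the integral over $\gamma$ defining $u_2(\bx)$ in \eqref{eq:trace_potential}, with $\n$ the outward normal of $\Omega_2$ on $\gamma$ — i.e. pointing into $\Omega_2$ as stated (care with orientation is needed here since $\gamma = \p\Omega_2 \cap \p\Omega_1$ and the sign convention in \eqref{eq:trace_potential} must match). Thus $u_2(\bx) = u(\bx)$ for $\bx\in\Omega_2\setminus\gamma$, which gives \eqref{eq:trace_potential_total} there since $\betaH_{\Omega_1}(\bx)=0$ and $\betaH_{\Omega_2}(\bx)=1$.

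For $\bx\in\Omega_1\setminus\gamma$, I would run the same Green's identity on $\Omega_2$, but now $\bx\notin\overline{\Omega_2}$, so there is no interior delta contribution and the identity yields directly $u_2(\bx) = 0$. This simultaneously establishes the last assertion of the lemma ($u_2 = 0$ in $\Omega_1\setminus\gamma$) and, combined with $\betaH_{\Omega_1}(\bx)=1$, $\betaH_{\Omega_2}(\bx)=0$, gives $u\,\betaH_{\Omega_1}(\bx) + u_2\,\betaH_{\Omega_2}(\bx) = u(\bx)$. Finally, on $\gamma$ itself one checks that both summands pick up the factor $\frac12$ and that $u_2$ restricted to $\gamma$ equals the average of its two one-sided limits, $0$ from the $\Omega_1$ side and $u$ from the $\Omega_2$ side, hence $\frac12 u$; adding $\frac12 u$ from the $u\betaH_{\Omega_1}$ term recovers $u$ on $\gamma$. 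This uses the continuity/boundedness of $G_{B,\kappa}$ and its first derivatives from Lemma \ref{lemma:singularity} to justify that the single- and double-layer potentials over $\gamma$ have the expected jump behavior.

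The main obstacle I anticipate is the careful justification of applying Green's identity on the unbounded domain $\Omega_2$: one must truncate at a large box or ball, estimate the boundary integrals using the exponential decay of both $u$ and $G_{B,\kappa}(\bx,\cdot)$ (valid by the cited results of \cite{Chen2010,Chen2013a} in both the constant and two-layered cases), and pass to the limit — and also handle the mild singularity of $G_{B,\kappa}$ and $\p_{y_i}G_{B,\kappa}$ when $\bx$ lies on or near $\gamma$, for which Lemma \ref{lemma:singularity} provides the integrability ($|\bx-\by|^{-1/2}$ and $|\bx-\by|^{-3/2}$ against the one-dimensional measure $d\by$ on $\gamma$, the latter being understood in a principal-value / jump-relation sense). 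A secondary but purely bookkeeping point is getting all the signs right: the normal $\n$ points toward $\Omega_2$, the conormal derivative associated with $\L_{B,\kappa}$ is $A_B\nabla_{\by}(\cdot)\cdot\n$, and the adjoint fundamental solution is $J_B^{-1}G_{B,\kappa}$ rather than $G_{B,\kappa}$ itself, so the double-layer term in \eqref{eq:trace_potential} correctly involves $A_B\nabla_{\by}(J_B^{-1}G_{B,\kappa})\cdot\n$. Once these are pinned down the result follows from the standard layer-potential representation, so I would keep the write-up to a short lemma-style argument citing \cite{Chen2017,Lassas2001} for the analytic prerequisites.
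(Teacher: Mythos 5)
Your proposal is correct and follows essentially the same route as the paper, which does not write out a proof of this lemma at all but simply notes that it follows ``by using integration by parts'' together with the exponential decay of $u$ and of $G_{B,\kappa}$; your Green's-identity argument on $\Omega_2$ (respectively $\Omega_1$) with the adjoint fundamental solution $J_B^{-1}G_{B,\kappa}(\bx,\cdot)$, truncation at infinity, and the $\tfrac12$-average convention on $\gamma$ is exactly the intended derivation. The only point to watch is the sign bookkeeping you already flagged, since the paper's convention $\L_{B,\kappa}G_{B,\kappa}(\cdot,\by)=-\delta_{\by}$ must be reconciled with the orientation of $\n$ toward $\Omega_2$.
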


We note that the following convention is used for the potential  \eqref{eq:trace_potential}  throughout this paper: the function $u$ and its derivatives in the integrand over $\gamma$ are defined as their corresponding limits from the positive $\n$ side,
  and the value of the integral over $\gamma$ is also defined as its limits from the positive $\n$ side. A similar result as  Lemma \ref{lemma:trace} also holds for  the corresponding PML problem of the Helmholtz equation in $\R^3$.

The trace transfer will be applied in the diagonal sweeping DDM \cite{Leng2020} for  horizontal or vertical interfaces or quadrant region interfaces, as shown in Figure \ref{fig:twopart}-(middle and right).
Lemma \ref{lemma:trace}  implies a procedure of two subdomains solving: suppose we only know the solution of $\P_{B,\kappa}$ in $\Omega_1$ and smoothly extend it to $\Omega_2$ so that its limit on $\gamma$ from the $\Omega_2$ side is well defined,  then the solution in $\Omega_2$ could be obtained using the potential \eqref{eq:trace_potential}, and together they form the total solution in $\Omega$ with \eqref{eq:trace_potential_total}.
 Building a domain decomposition method further requires changing the global problem solving in the procedure to subdomain problems solving, that is, substitute the fundamental solution $G_{B,\kappa}(\bx,\by)$  in \eqref{eq:trace_potential} to the ones of subdomains. 
 
  Assume that $\Omega_2 \cap B$ is also a rectangular (denoted by $B_2$),
 then a subdomain PML problem associated with  $B_2$ can be built and  the wave number of this subdomain problem, denoted by $\kappa_2$, 
 is an extension of $\kappa|_{\Omega_2}$ to $\R^2$ but could be different from  $\kappa$ outside of $\Omega_2$.
 We then can prove the following lemma regarding such  subdomain PML problem $\P_{B_2,\kappa_2}$.
\begin{lemmaa} \label{lemma:change_domain}
  Suppose that $f$, $u$ and $u_2$ satisfy the conditions given in Lemma \ref{lemma:trace}, and additionally, $f$ is bounded and smooth.
  Assume that the wave number $\kappa_2$ satisfies the following two conditions:
  (i) $\kappa_2 = \kappa$ in $\Omega_2$ and 
  (ii) $\kappa_2$ is either constant or two-layered in $\R^2$.  
  Let $\widetilde{u}_2$ be the potential associated with the subdomain PML problem $\P_{B_2,\kappa_2}$, defined as
  \begin{align} \label{eq:trace_potential2}
    \widetilde{u}_2 (\bx):= \DD \int_{{\gamma}} 
    J_{B_2}^{-1} G_{B_2,\kappa_2}(\bx,\by) \big(A_{B_2} \nabla_{\by} u(\by) \cdot \n\big) 
    - u(\by) \Big(A_{B_2} \nabla_{\by}(J_{B_2}^{-1} G_{B_2,\kappa_2} (\bx, \by)) \cdot \n \Big) d\by,
  \end{align}
  then $\widetilde{u}_2 = u_2$ in $\R^2$.
\end{lemmaa}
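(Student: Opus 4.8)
The plan is to show that the two potentials $u_2$ and $\widetilde u_2$ — built from the global fundamental solution $G_{B,\kappa}$ and the subdomain fundamental solution $G_{B_2,\kappa_2}$ respectively — solve the same PDE problem on all of $\R^2$, and then conclude they coincide by uniqueness of the PML problem. The key observation is that although $G_{B,\kappa}$ and $G_{B_2,\kappa_2}$ differ, the trace data $(u|_\gamma,\, A\nabla u\cdot\n|_\gamma)$ used to build both potentials are the \emph{same} Cauchy data of the single function $u$, and the curve $\gamma$ lies in $\Omega_2\cap B = B_2$ where, by hypothesis (i), $\kappa_2=\kappa$, so the two PML operators agree in a neighborhood of $\gamma$.

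First I would record, via Lemma~\ref{lemma:trace} applied verbatim with $(B,\kappa)$ replaced by $(B_2,\kappa_2)$, that $\widetilde u_2$ satisfies $\L_{B_2,\kappa_2}\widetilde u_2 = 0$ in $\Omega_2\setminus\gamma$, that $\widetilde u_2 = 0$ in $\Omega_1\setminus\gamma$, and that $\widetilde u_2$ has a jump across $\gamma$ dictated by the Cauchy data of $u$ — precisely, the single- and double-layer structure of \eqref{eq:trace_potential2} forces $[\widetilde u_2]_\gamma$ and $[A_{B_2}\nabla\widetilde u_2\cdot\n]_\gamma$ to match $u|_\gamma$ and $A_{B_2}\nabla u\cdot\n|_\gamma$ from the $\n$-side (this is the standard jump relations for layer potentials, and is exactly the mechanism behind \eqref{eq:trace_potential_total}). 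The same statements hold for $u_2$ with $(B_2,\kappa_2)$ replaced by $(B,\kappa)$. Since $\gamma\subset B_2$ and $\kappa_2=\kappa$ on $\Omega_2\supset\gamma$, one has $A_{B_2}=A_B$, $J_{B_2}=J_B$ in a neighborhood of $\gamma$ (the complex stretching for $B_2$ and for $B$ agree near $\gamma$ because $\gamma$ is interior to both boxes), so the jump data imposed on $u_2$ and on $\widetilde u_2$ across $\gamma$ are identical.

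Next I would set $w := u_2 - \widetilde u_2$ and show $w\equiv 0$. By the above, $w$ is continuous across $\gamma$ together with its conormal derivative (the jumps cancel), $w=0$ identically in $\Omega_1\setminus\gamma$, and in $\Omega_2$ it satisfies $\L_{B_2,\kappa_2} u_2 = (\L_{B_2,\kappa_2}-\L_{B,\kappa})u_2$ — here I need the smoothness/boundedness of $f$ (hence elliptic-regularity smoothness of $u$ near $\gamma$, and the mild singularity bounds of Lemma~\ref{lemma:singularity}) to justify differentiating under the integral sign and to make sense of $\L_{B,\kappa}u_2$ pointwise. But $u_2$ solves $\L_{B,\kappa}u_2=0$ in $\Omega_2\setminus\gamma$, and on $\Omega_2$ the operators $\L_{B_2,\kappa_2}$ and $\L_{B,\kappa}$ may genuinely differ (different boxes, possibly different far-field $\kappa$); the point is that $w$ is a solution of the \emph{homogeneous} $\L_{B_2,\kappa_2}$ problem on all of $\R^2$: in $\Omega_1$ it is zero, across $\gamma$ it has matching Cauchy data, and in $\Omega_2$ both $u_2$ and $\widetilde u_2$ satisfy $\L_{B_2,\kappa_2}(\cdot)=0$ once we note $\L_{B_2,\kappa_2}u_2 = 0$ in $\Omega_2\setminus\gamma$ as well — this last equality holds because $\L_{B_2,\kappa_2}=\L_{B,\kappa}$ on $\Omega_2$, since $\kappa_2=\kappa$ there \emph{and} $B_2\cap\Omega_2 = B\cap\Omega_2$ implies the two coordinate stretchings coincide on $\Omega_2$. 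Then $w$ is a global solution of the well-posed PML problem $\P_{B_2,\kappa_2}$ with zero source and the radiation-type decay inherited from the layer potentials (each of $u_2,\widetilde u_2$ decays outside its respective box, using exponential decay of $G$ and of $u$ on $\gamma$), so by the uniqueness established in \cite{Chen2010,Chen2013a} for the constant and two-layered cases — which applies by hypothesis (ii) — we get $w\equiv 0$, i.e. $\widetilde u_2 = u_2$ in $\R^2$.

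The main obstacle, and where care is genuinely needed, is the claim that $\L_{B,\kappa}$ and $\L_{B_2,\kappa_2}$ actually \emph{agree on $\Omega_2$}, not merely near $\gamma$: this requires that the uniaxial complex stretchings associated with the two rectangular boxes $B$ and $B_2$ restrict to the same map on $\Omega_2$, which in turn uses that $\Omega_2\cap B$ equals $B_2$ as rectangles sharing the relevant faces, so the PML layers are laid out consistently there — together with $\kappa_2=\kappa$ on $\Omega_2$. Once that structural compatibility is pinned down, the rest is the now-routine combination of layer-potential jump relations (as in Lemma~\ref{lemma:trace}), the singularity estimates of Lemma~\ref{lemma:singularity} to legitimize the manipulations, and the well-posedness/uniqueness of the PML problem.
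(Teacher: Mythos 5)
Your decomposition of the task is right, and the easy half is handled correctly: $\widetilde u_2 = u_2$ in $\Omega_2$ does follow because $A_{B_2}=A_B$, $J_{B_2}=J_B$ on $\gamma$ and $G_{B_2,\kappa_2}(\bx,\cdot)=G_{B,\kappa}(\bx,\cdot)$ there for $\bx\in\Omega_2$, which is exactly how the paper disposes of that part. The gap is in the other half. You claim that $\widetilde u_2=0$ in $\Omega_1\setminus\gamma$ follows ``via Lemma~\ref{lemma:trace} applied verbatim with $(B,\kappa)$ replaced by $(B_2,\kappa_2)$,'' but the hypotheses of Lemma~\ref{lemma:trace} are not met for that pair: the lemma needs the density on $\gamma$ to be the Cauchy data of a solution of the \emph{same} PML problem whose fundamental solution appears in the potential, with source supported in $\Omega_1\cap B_2$. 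Here $u$ solves $\P_{B,\kappa}$, not $\P_{B_2,\kappa_2}$ --- the two operators genuinely differ in $\Omega_1$, where the $B_2$-stretching is active and the $B$-stretching is not --- and $\supp f\subset\Omega_1\cap B$ is disjoint from the interior of $B_2$. The vanishing of $\widetilde u_2$ in $\Omega_1$ is precisely the nontrivial content of Lemma~\ref{lemma:change_domain}, so asserting it this way begs the question. The same unproven fact is what lets you say $w=u_2-\widetilde u_2$ vanishes in $\Omega_1$; without it, $w$ is not a global homogeneous solution of any single operator (in $\Omega_1$, $u_2$ is annihilated by $\L_{B,\kappa}$ and $\widetilde u_2$ by $\L_{B_2,\kappa_2}$, which differ there), and the concluding uniqueness step has nothing to act on.

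What the paper actually does for this half is an $\eps$-regularization: it replaces $f$ by $f\chi_{\Omega_1\setminus\Omega^{\eps}}$, forms $v^{\eps}=u^{\eps}(1-\beta^{\eps})$, which \emph{is} a genuine solution of the extended subdomain problem $\P_{B_2^{\eps},\kappa_2}$ with source supported in the thin strip $\Omega^{\eps}$ on the $\Omega_1$ side of $\gamma$, applies Lemma~\ref{lemma:trace} to that problem to conclude that the potential with data $u^{\eps}|_{\gamma}=v^{\eps}|_{\gamma}$ vanishes in $\Omega_1\setminus\Omega^{\eps}$, and then passes to the limit $\eps\to 0$, using the singularity bounds of Lemma~\ref{lemma:singularity} to obtain the uniform convergence of $G_{B_2^{\eps},\kappa_2}$ to $G_{B_2,\kappa_2}$ and of $u^{\eps}$ to $u$ (together with gradients) on $\gamma$. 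Your underlying intuition --- that $u|_{\Omega_2}$ is a decaying homogeneous solution of $\L_{B_2,\kappa_2}$ in $\Omega_2$, so its Green representation should vanish on the other side of $\gamma$ --- is the correct moral reading of the lemma, but turning it into a proof requires exactly this extension-and-limit construction (or an equivalent justification of Green's identity on the unbounded region $\Omega_2$ with data that may be only one-sidedly smooth on $\gamma$), which your proposal does not supply.
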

\begin{proof}
  We just need consider the case of horizontal trace transfer  (see Figure \ref{fig:twopart}-(middle)) for the horizontally layered media
  since all other cases are similar.
  By the definition of uniaxial  PML, it is clear that $J_{B_2} |_{\gamma}= J_B |_{\gamma}$  and $A_{B_2} |_{\gamma} = A_B |_{\gamma}$, and also  $G_{B_2,\kappa_2}(\bx,\by) = G_{B,\kappa}(\bx,\by)$ for $\bx \in \Omega_2$ and $\by \in \gamma$ , thus $\widetilde{u}_2 = u_2$ in $\Omega_2$. Since $u_2 = 0$ in $\Omega_1$, we will show that $\widetilde{u}_2 = 0$ in $\Omega_1$ by using the following argument.
  
  As in the source transfer approach \cite{Chen2013a}, an extended region of the subdomain is introduced.
  Let us define $\Omega_2^{\eps} := \{\bx \;|\; {\rm dist}(\bx, \Omega_2) \leq \eps \}$, which is an extended region of $\Omega_2$ by a distance of  $\eps>0$,
  and $\Omega^{\eps} = \Omega_2^{\eps} \setminus \Omega_2$.
  A smooth function $\beta^{\eps}$ exists such that $\beta^{\eps} = 1$ in $\Omega_1 \setminus \Omega^{\eps}$ and  $\beta^{\eps} = 0$ in $\Omega_2 $.
  Denote $u^{\eps}$ as the solution to problem $\P_{B,\kappa}$ with the source $f \cdot \chi_{\Omega_1 \setminus \Omega^{\eps} } $, i.e.,
 $    \L_{B,\kappa} u^{\eps} =  f \cdot \chi_{\Omega_1 \setminus \Omega^{\eps}}$ in $\R^2$,
  and  $v^{\eps}$ as the solution to the  problem
  \begin{align}
    \L_{B,\kappa} v^{\eps} =  - \L_{B,\kappa} (u^{\eps} \beta^{\eps} ) \chi_{\Omega^{\eps}} , \qquad \text{in} \,\, \R^2, \label{eq:lemma2_veps}
  \end{align}
  then it is easy to see  that 
  $  \L_{B,\kappa} v^{\eps} = f - \L_{B,\kappa} (u^{\eps} \beta^{\eps} )$ in $\R^2$,
  which implies
  \begin{align}
    v^{\eps} = u^{\eps} (1 - \beta^{\eps}) , \qquad \text{in} \,\, \R^2. \label{eq:lemma2_veps2}
  \end{align}

\begin{figure}[htbp]    
        \centering
        \begin{minipage}[t]{0.34\linewidth}
                        \centering
                \includegraphics[width=1\textwidth]{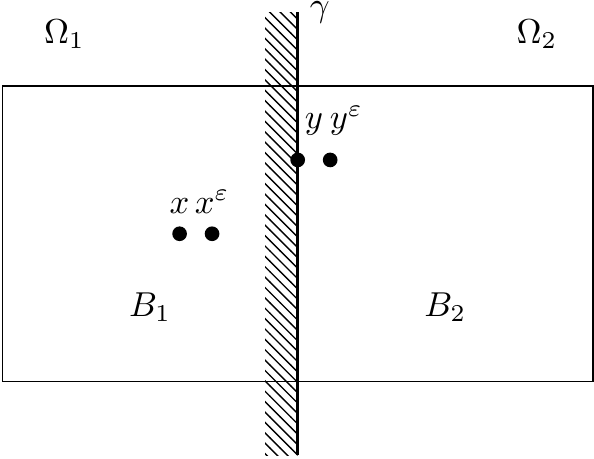}
        \end{minipage}
        \hspace{0.6cm}
        \begin{minipage}[t]{0.42\linewidth}
                \centering
                \includegraphics[width=1\textwidth]{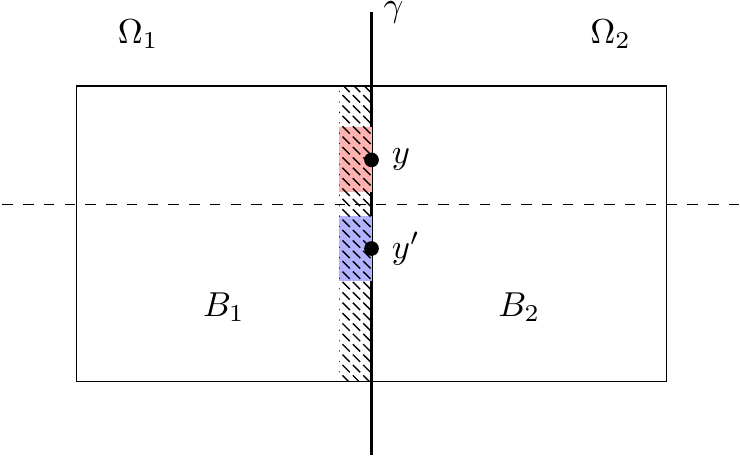}
        \end{minipage}
        \vspace{-0.1cm}
        \caption{\it Trace transfer for two subdomains. Left: the shadowed area denotes $\Omega^{\eps}$;
          right: the shadowed area denotes $\B^{\eps}$,  the dashed line denotes  the media interface,   the red shadowed region denotes  $B_{\by}^{\eps}$, and  the blue shadowed region denotes  $B_{\by'}^{\eps}$.
          \label{fig:lemma}}
\end{figure}

  Let the extended rectangular of $B_2$ be defined as $B_2^{\eps} = \Omega_2^{\eps}  \cap B$, as shown in Figure \ref{fig:lemma},  then by \eqref{eq:lemma2_veps} and \eqref{eq:lemma2_veps2}, we see that
  $v^{\eps}$ is also the solution to the following subdomain  problem associated with $B_2^{\eps}$:
  \begin{align}
    \L_{B_2^{\eps},\kappa_2} v^{\eps} =  - \L_{B,\kappa} (u^{\eps} \beta^{\eps} ) \chi_{\Omega^{\eps}} , \qquad \text{in} \,\, \R^2. \label{eq:lemma2_veps3}
  \end{align}
  Using Lemma \ref{lemma:trace} on \eqref{eq:lemma2_veps3}, we have that
    for any $\bx \in \Omega_1 \setminus \Omega^{\eps} $,
  \begin{align}\label{eqtt}
     \DD \int_{{\gamma^{}}} 
    J_{B_2^{\eps}}^{-1} G_{B_2^{\eps},\kappa_2}(\bx,\by) \big(A_{B_2^{\eps}} \nabla_{\by} u^{\eps}(\by) \cdot \n\big) 
    - u^{\eps}(\by) \Big(A_{B_2^{\eps}} \nabla_{\by}(J_{B_2^{\eps}}^{-1} G_{B_2^{\eps},\kappa_2} (\bx, \by)) \cdot \n \Big) d\by = 0,
  \end{align}
  where the fact of $u^{\eps} |_{\gamma}= v^{\eps}|_{\gamma}$ is used.  Clearly, we also have $J_{B_2^{\eps}} |_{\gamma} = J_{B_2} |_{\gamma} $ and $A_{B_2^{\eps}} |_{\gamma} = A_{B_2} |_{\gamma}$.
  Suppose that for any given $\bx\in \Omega_1$, 
  \begin{align}
    \lim\limits_{\eps \rightarrow 0} G_{B_2 ^{\eps },\kappa_2}(\bx, \by) &= G_{B_2,\kappa_2}(\bx, \by), \label{eq:G_eps} \\
    \lim\limits_{\eps \rightarrow 0} \nabla_{\by} (J_{B^{\eps}_2} G_{B_2 ^{\eps },\kappa_2}(\bx, \by)) &= \nabla_{\by} (J_{B^{}_2} G_{B_2,\kappa_2}(\bx, \by))\label{eq:gradG_eps}
  \end{align}
  uniformly for any $\by \in \gamma$,  and also suppose that
  \begin{align}
    \lim\limits_{\eps \rightarrow 0} u^{\eps }(\by) &= u(\by) \label{eq:u_eps},\\
    \lim\limits_{\eps \rightarrow 0} \nabla_{\by} u^{\eps }(\by) &= \nabla_{\by} u(\by) \label{eq:gradu_eps}
  \end{align}
  uniformly for any $\by \in \gamma$, then we can obtain from \eqref{eqtt} that for any $\bx \in \Omega_1$,
  \begin{align*} 
    \DD \int_{{\gamma}} 
    J_{B_2}^{-1} G_{B_2,\kappa_2}(\bx,\by) \big(A_{B_2} \nabla_{\by} u(\by) \cdot \n\big) 
    - u(\by) \Big(A_{B_2} \nabla_{\by}(J_{B_2}^{-1} G_{B_2,\kappa_2} (\bx, \by)) \cdot \n \Big) d\by = 0,
  \end{align*}
  that is,  $\widetilde{u}_2 = 0$ in $\Omega_1$.
  Hence now we  are left to verify \eqref{eq:G_eps}-\eqref{eq:gradu_eps}.
  
  To prove \eqref{eq:G_eps}-\eqref{eq:gradG_eps}, we first observe that
  \begin{align*}
   |G_{B_2^{\eps},\kappa_2} (\bx, \by) - G_{B_2,\kappa_2} (\bx, \by)| &= |G_{B_2,\kappa_2} (\bx^{\eps}, \by^{\eps}) - G_{B_2,\kappa_2} (\bx, \by)| \nonumber \\
  &\leq  |G_{B_2,\kappa_2} (\bx^{\eps}, \by^{\eps}) - G_{B_2,\kappa_2} (\bx, \by^{\eps})| 
  + |G_{B_2,\kappa_2} (\bx, \by^{\eps}) - G_{B_2,\kappa_2} (\bx, \by)|,
  \end{align*}
  where $\bx^{\eps} = \bx + (\eps, 0)$, $\by^{\eps} = \by + (\eps, 0)$, as shown in Figure \ref{fig:lemma}-(left). For fixed $\bx \in \Omega_1$,  let us denote by $d_{\bx}$ the distance between $\bx$ and $\gamma$ and take $\eps < d_{\bx}/4$, 
  then by Lemma \ref{lemma:singularity} we have that
  \begin{align*}
    |G_{B_2,\kappa_2} (\bx^{\eps}, \by^{\eps}) - G_{B_2,\kappa_2} (\bx, \by^{\eps})|
    &= \eps |\partial_{x_1} G_{B_2,\kappa_2} (\bx + \theta (\eps, 0), \by^{\eps})|
    \leq C  \frac{\eps}{d_{\bx}},\\
    |G_{B_2,\kappa_2} (\bx, \by^{\eps}) - G_{B_2,\kappa_2} (\bx, \by)| &= \eps |\partial_{y_1} G_{B_2,\kappa_2} (\bx, \by + \hat\theta (\eps , 0)| \leq C  \frac{\eps}{d_{\bx}},
  \end{align*}
  for some $\theta, \hat \theta \in [0, 1]$, 
  which deduce that \eqref{eq:G_eps}-\eqref{eq:gradG_eps} hold uniformly for any $\by \in \gamma$ .

  %
  %
  %
  %
  %
  %
  To prove \eqref{eq:u_eps}-\eqref{eq:gradu_eps}, we first notice that  
  for any $\by \in \gamma$,
  \begin{align}
    |u^{\eps}(\by) - u(\by)| = \left|\int_{B^\eps} f(\bz) G_{B,\kappa}(\by, \bz) d\bz \right|  
                         \leq C \int_{B^\eps} |G_{B,\kappa}(\by, \bz)| d\bz. \label{eq:u_eps_1}
  \end{align}
 Without loss of generality, let assume  the box $B$ = $[-l_1, l_1] \times [-l_2, l_2]$  and the interface $\gamma = \{\xi\} \times (-\infty, \infty)$.
  We can split $B^\eps = [\xi - \eps, \xi] \times [-l_2, l_2]$
  into three parts (see Figure \ref{fig:lemma}-(right)):
  a  box region in the neighbor of $\by$, namely $B_{\by}^{\eps} = [\xi - \eps, \xi] \times [y_2 - \eps, y_2 + \eps] $,
  a box region $B_{\by'}^{\eps}$ in the neighbor of $\by'$ just as $B_{\by}^{\eps}$, 
  and the left region $B^\eps \setminus \big( B^\eps_{\by} \cup B^\eps_{\by'} \big)$.
  Then by Lemma \ref{lemma:singularity}, we have
  \begin{align}
    \int_{B^\eps_{\by}} |G_{B,\kappa}(\by, \bz)| d\bz \leq C \int_{B^\eps_{\by}} \left( \frac{1}{|\by-\bz|^{1/2}} + 1 \right) d\bz
    \leq C \int_{[0,\sqrt{2} \eps]} \left( \frac{1}{t^{1/2}} + 1 \right) t dt \leq C \eps^{3/2}, \label{eq:u_eps_2} 
  \end{align}
 and similar estimation also holds for the region $B^\eps_{\by'}$, and 
  \begin{align}
   \int_{B^\eps \setminus \big( B^\eps_{\by} \cup B^\eps_{\by'} \big)} |G_{B,\kappa}(\by, \bz)| d\bz &\leq C \int_{[\xi - \eps, \xi]} dz_1 \int_{[-l_2,l_2]\setminus \big( [y_2-\eps,y_2+\eps] \cup [y'_2-\eps,y'_2+\eps] \big)  }  \left( \frac{1}{|\by-\bz|^{1/2}} + 1\right)  dz_2\nonumber \\
   &\leq C \eps \int_{[\eps, 2l_2]} \left( \frac{1}{t^{1/2}} + 1 \right) dt \leq C \eps. 
    \label{eq:u_eps_3}
  \end{align}
  Therefore, it holds that
  \begin{align}
    \int_{B^\eps} |G_{B,\kappa}(\by, \bz)| d\bz \leq C (\eps^{3/2} + \eps).
    \label{eq:u_eps_4}
  \end{align}
  Similarly, we also have that
  \begin{align}
    \int_{B^\eps} |\nabla_{\bz} G_{B,\kappa}(\by, \bz)| d\bz \leq C \eps^{1/2}.
    \label{eq:u_eps_5}
  \end{align}
Based on   \eqref{eq:u_eps_1}, \eqref{eq:u_eps_4} and \eqref{eq:u_eps_5}, it is easy to find
  that \eqref{eq:u_eps}-\eqref{eq:gradu_eps} hold uniformly for any $\by \in \gamma$.
\end{proof}

%
%
\subsection{Diagonal sweeping DDM}

We use the following domain decomposition for  the rectangular domain
$\Omega = [-l_1,l_1]\times[-l_2,l_2]$ in $\R^2$. 
Denote $\Delta\xi = 2 l_1 / \Nbx$,
$\xi_i = -l_1 + (i-1) \Delta \xi$ for $i = 1, 2, \ldots, \Nbx+1$, and
$\Delta\eta = 2 \l_2 / \Nby$, $\eta_j = -l_2 + (j-1) \Delta\eta$ for
$j = 1, 2, \ldots, \Nby+1$,
then   a set of $\Nbx\times \Nby$ nonoverlapping rectangular subdomains are given by
$$\Omega_{i,j}
: = [\xi_{i}, \xi_{i+1}] \times [\eta_{j}, \eta_{j+1}],\qquad i = 1, 2,\ldots, \Nbx,\; j = 1, 2,\ldots, \Nby.$$
Define the region $\Omega_{i0,i1;j0,j1}$, $1\leq i0\leq i1\leq N_1+1$,
$1\leq j0\leq j1\leq N_2+1$, as the union of the subdomains as
$$\Omega_{i0,i1;j0,j1} := \bigcup\limits_{i0 \leq i \leq i1 \atop j0 \leq j \leq j1} \Omega_{i, j}.$$
The decomposition of the source $f$ is defined as
$$f_{i,j} = f \cdot \chi_{\Omega_{i,j}} , \qquad i = 1, 2\ldots, \Nbx,\; j = 1, 2, \ldots, \Nby.$$
For the PML problem $\P_{\Omega_{i,j},\kappa_{i,j}}$ associated with  the subdomain $\Omega_{i,j}$,
the corresponding wave number $\kappa_{i,j}$ is defined through  extension of the subdomain interior wave number by
\begin{align}
  \label{eq:wave_number}
  \kappa_{i,j}(\bx) = \kappa(\bx'), 
\end{align}
where $\bx' = \argmin_{\by \in \Omega_{i,j}} |\bx - \by|$. We remark that with this definition the  subdomain  PML problem associated with a subdomain completely contained in the interior of one medium region  always has a constant wave number.
The regional PML problem $\P_{\Omega_{i0,i1;j0,j1},\kappa_{i0,i1;j0,j1}}$ associated with the region $\Omega_{i0,i1;j0,j1}$ uses the wave number $\kappa_{i0,i1;j0,j1}$, which is defined in a similar way as \eqref{eq:wave_number}.

%
%

The following notations are introduced. Let $H$ denote the Heaviside function and define the following  one-dimensional cutoff functions: 
\begin{align*}
\mu^{(1)}_{\Box; i}(x_1) &= \left\{
\begin{array}{ll}
  \DD H( x_1 - \xi_i ),  & \, \Box = -1 \,\text{and}\, i \neq 1, \\
  \DD H( \xi_{i+1} - x_1 ),  & \, \Box = 1 \,\text{and}\, i \neq N_1, \\
  1,                         & \, \text{otherwise}, 
\end{array}
\right. &  \mu^{(2)}_{\vartriangle; j}(x_2) &= \left\{
\begin{array}{ll}
\DD H( x_2 - \eta_j),  & \, \vartriangle = -1 \,\text{and}\, j \neq 1,\\
  \DD H(\eta_{j+1}  - x_2),  & \, \vartriangle = 1 \,\text{and}\, j \neq N_2, \\
  1,                         & \, \text{otherwise}, 
\end{array}
\right.
\end{align*}
for $\Box, \vartriangle = \pm 1$, $i = 1, \ldots \Nbx$, $j = 1, \ldots, \Nby$.
Then the two-dimensional cutoff function corresponding to the subdomain $\Omega_{i,j}$ is defined as
$$\betaH_{i, j}(x_1, x_2) = \betaH^{(1)}_{-1; i}(x_1)
\betaH^{(1)}_{+1; i}(x_1) \betaH^{(2)}_{-1; j}(x_2)
\betaH^{(2)}_{+1; j}(x_2),$$
for $i = 1, \ldots \Nbx$, $j = 1, \ldots, \Nby$.
These cutoff functions could be regarded as the discontinuous counterpart of the smooth cutoff functions used in the diagonal sweeping DDM with source transfer \cite{Leng2020}.
%
%
For brief notations, set $G_{i,j} = G_{\Omega_{i,j},\kappa_{i,j}}$ the fundamental solution to the subdomain PML problem $\P_{\Omega_{i,j},\kappa_{i,j}}$,  denote the boundaries of $\Omega_{i,j}$ as
\begin{align*}
        \gamma_{\Box,\vartriangle; i, j} &= \left\{ 
        \begin{array}{ll}
                \{(x_1,x_2)\;|\;x_1 = \xi_i \minusd\},  & \, \text{if} \,\, (\Box, \vartriangle) = (-1,0) \,\,\text{and}\,\, i > 1, \\
                \{(x_1,x_2)\;|\; x_1 = \xi_{i+1} \plusd\},  & \, \text{if} \,\, (\Box, \vartriangle) = (+1,0) \,\,\text{and}\,\, i < \Nbx,\\
                \{(x_1,x_2)\;|\; x_2 = \eta_j \minusd\},  & \, \text{if} \,\,  (\Box, \vartriangle) = (0,-1) \,\,\text{and}\,\, j > 1,\\
                \{(x_1,x_2)\;|\; x_2 = \eta_{j+1} \plusd\},  & \, \text{if} \,\,  (\Box, \vartriangle) = (0,+1)  \,\,\text{and}\,\, j < \Nby, \\
                \varnothing , &    \text{otherwise,}        
        \end{array}
        \right. 
\end{align*}
and the corresponding unit normal vectors associated with them are then $\n_{\Box,\vartriangle} =  -(\Box, \vartriangle)$.
Then we are able to define the potential operators  as follows:
\begin{align}
\Potential_{\Box, \vartriangle;\, i, j} (v)
= \PotentialVec_{\Box, \vartriangle;\, i, j} \left( (v, A_{\Omega_{i,j}}  \nabla v \cdot \n_{\Box, \vartriangle})^T \right),
\end{align}
where
\begin{align*}
        \PotentialVec_{\Box, \vartriangle;\, i, j} ( (v, w)^T ):=& 
        \DD  \int_{\gamma_{\Box, \vartriangle;\, i, j}}
         J_{\Omega_{i,j}}^{-1} G_{{i, j}}(\bx,\by )
                w(\by)
                - v(\by) \Big( A_{\Omega_{i,j}} \nabla_{\by} \big( J_{\Omega_{i,j}}^{-1} G_{{i, j}}(\bx, \by) \big) \cdot \n_{\Box, \vartriangle} \Big)  d\by,
\end{align*}
for $(\Box, \vartriangle) = (\pm 1, 0), (0, \pm 1)$. 

 There are totally $2^2=4$ diagonal directions in $\R^2$: $(+1,+1)$, $(-1,+1)$, $(+1,-1)$, $(-1,-1)$, and the sweep along each of the directions contains a total of $(\Nbx-1)+(\Nby-1)+1 = \Nbx+\Nby-1$ steps.
 The sweeping order is of top-right, top-left, bottom-right, and bottom-left, denoted respectively by
\begin{align} \label{Sorder2D}
\begin{array}{llll}
(+1,+1),& (-1,+1),& (+1,-1),& (-1,-1).
\end{array}
\end{align}
In the $s$-th step of the sweep of direction $(d_1, d_2)$, the group of subdomains $\{ \Omega_{i,j} \}$ satisfying
  \begin{equation}
    \label{eq:step_subd}    
    \widehat{i}(i)+\widehat{j}(j)+1 = s 
  \end{equation}
with
\begin{align} \label{eq:step_subd_ij}    
\widehat{i}(i) &= \left\{ 
\begin{array}{ll}
i-1,  &\,\,\, \text{if} \,\,\,\, d_1 = 1,\\
\Nbx-i, & \,\,\, \text{if} \,\,\,\, d_1 = -1,
\end{array}
\right. &  
\widehat{j}(j) &= \left\{
\begin{array}{ll}
j-1,  &\,\,\, \text{if} \,\,\,\, d_2 = 1,\\
\Nby-j, & \,\,\, \text{if} \,\,\,\, d_2 = -1,
\end{array}
\right. 
\end{align}
are to be solved.
Define that two vectors $\d_1$ and $\d_2$ in $\R^2$ are in the similar direction if $\d_1 \cdot \d_2 > 0$, the following rules on the transferred traces during sweeps in $\R^2$ are introduced, which are the same with those in \cite{Leng2020}.
\begin{rulee}{\rm (Similar directions in $\R^2$)} \label{rule2d_a}
A transferred trace  which is not in the similar direction of one sweep in $\R^2$, should not be used in that sweep.
\end{rulee}
\begin{rulee}{\rm (Opposite directions in $\R^2$)}      \label{rule2d_b}
 The transferred trace generated in one sweep in $\R^2$ should not be used in a later sweep if these two sweeps have opposite directions.
\end{rulee}

Our trace transfer-based diagonal sweeping DDM in $\R^2$ is then stated in the following.

\alglongline
\begin{algorithm_}[Trace transfer-based Diagonal sweeping DDM in $\R^2$]~ 
        
        \label{alg:diag2D_t}
        \begin{algorithmic}[1]
                \parState {Set the sweep order as $(+1,+1)$,  $(-1,+1)$,  $(+1,-1)$, $(-1,-1)$ } 
                \parState {Set the local trace of each subdomain for each sweep as
                        $\mathbf{g}^{l}_{\Box,\vartriangle; i,j} = 0$, $l=1,2,3,4$}
                \For{Sweep $l = 1,\ldots,4$}
                \For{Step $s = 1, \ldots, \Nbx+\Nby-1$ } 
                \For {each of the subdomains $\{\Omega_{i,j}\}$ defined by \eqref{eq:step_subd} in Step $s$ of Sweep $l$} 
                \parState {If $f_{i,j} \neq 0$, then
                        solve the local solution 
                        $\L_{\Omega_{i,j},\kappa_{i,j}} (u_{i, j}^{l}) =  f_{i,j}$,\\
                        else set $u_{i, j}^{l} = 0$ without solving}
                \State Set $f_{i,j} = 0$
                \State Add potentials to the local solution
                \begin{eqnarray} \label{eq:subd_solve}
                        u_{i, j}^{l} \gets u_{i, j}^{l} + \sum\limits_{\substack{(\Box,\vartriangle) = (\pm 1, 0), (0, \pm 1)} }  \PotentialVec_{\Box, \vartriangle;\, i, j} (\mathbf{g}^{l}_{\Box,\vartriangle; i, j})
                \end{eqnarray}
                
                \For {each direction $(\Box,\vartriangle)$ =  $(\pm 1, 0)$, $(0, \pm 1)$ }
                \State  Compute new transferred trace 
                $\left( u_{i, j}^{l}, 
                        A_{\Omega_{i,j}} \nabla u_{i,j}^{l} \cdot  \n_{\Box,\vartriangle}
                        \right)^T \Big|_{\gamma_{\Box,\vartriangle; i, j}} $
                \parState{Find the smallest sweep number $l' \geq l$, such that the transferred trace could be used in Sweep $l'$, according to Rules \ref{rule2d_a} and \ref{rule2d_b}} 
                \parState {Add the transferred trace to the $l'$-th local trace of the corresponding  neighbor subdomain}
                \begin{eqnarray*}
                 \qquad\qquad\qquad \mathbf{g}^{l'}_{-\Box,-\vartriangle; i+\Box, j+\vartriangle} \gets
                  \mathbf{g}^{l'}_{-\Box,-\vartriangle; i+\Box, j+\vartriangle} +
                        \left( u_{i, j}^{l}, 
                        A_{i,j} \nabla u_{i,j}^{l} \cdot  \n_{\Box,\vartriangle}
                        \right)^T \Big|_{\gamma_{\Box,\vartriangle; i, j}} 
                \end{eqnarray*}
                \EndFor
                \EndFor
                \EndFor
                \EndFor
                \State The DDM solution for  $\P_{\Omega,\kappa}$ with the source $f$ is then given by
                $$ u_{\text{DDM}} = \sum\limits_{\substack{l = 1, \ldots, 4 }} 
                \sum\limits_{\substack{\rangeTwoDRow}} u_{i,j}^{l} \mu_{i, j} $$
                
        \end{algorithmic}
\end{algorithm_}
\alglongline   
\vspace{0.2cm}

We note that the subdomain problem \eqref{eq:subd_solve} in Algorithm \ref{alg:diag2D_t} is solved with several potentials, however, in the practical numerical discretization, these potentials are not computed individually then summed up. Instead, the line integrals of the potentials for one subdomain are discretized as solutions to the subdomain problem with particular sources, as is done in the polarized trace method \cite{Zepeda2014}, then the sources are summed up as one RHS for the subdomain and solved, therefore the subdomain problem \eqref{eq:subd_solve} is solved only once in one step.

\section{Convergence analysis}
In this section we carry out  convergence analysis to the 
proposed diagonal sweeping DDM with  trace transfer in $\R^2$ (Algorithm \ref{alg:diag2D_t}) in both  the constant medium and two-layered media cases.

\subsection{The constant medium case}

We will show that the  DDM solution $u_{DDM}$ of Algorithm \ref{alg:diag2D_t}  gives exactly the solution $u$ of  \eqref{eq:PML} (the PML problem  $\P_{\Omega,\kappa}$  with the source $f$) in the constant medium  case. 
Note that in this case $\kappa_{i,j}\equiv \kappa$ and $\kappa_{i_1,i_2;j_1,j_2}\equiv \kappa$ where $\kappa$ is a constant function.
Let us start by assuming that the source lies in only one subdomain, i.e., $\supp(f) \subset \Omega_{i_0,j_0}$ for some $(i_0,j_0)$.  Define the regional solution $u_{i_1,i_2;j_1,j_2}$ as the solution to the PML problem $\P_{\Omega_{i_1,i_2;j_1,j_2},\kappa}$ with the source $f$, where $i_1 \leq i_0 \leq i_2$ and $j_1 \leq j_0 \leq j_2$, obviously, $u_{i_1,i_2;j_1,j_2} = u$ in $\Omega_{i_1,i_2;j_1,j_2}$.

%
%

Let us check the trace transfers for the subdomain solution construction in the \textbf{first sweep}, i.e., the sweep in the direction of $(+1,+1)$. 
 The nonzero subdomain solve starts at step $i_0+j_0-1$, the subdomain problem $\P_{\Omega_{i_0,j_0},\kappa}$ is solved with the source $f$, and in the following steps, the subdomain problem $\P_{\Omega_{i,j},\kappa}$, $i \geq i_0$, $j \geq j_0$, is solved at step $ i + j -1$.
%
For those subdomains to the right of $\Omega_{i_0,j_0}$, i.e., $\Omega_{i;j_0}$, $i=i_0+1,\ldots,\Nbx$, the subdomain solutions $u_{i,j_0}^1$ are obtained sequentially at one per step from left to right.  By applying the horizontal trace transfer repeatedly on the region $\Omega_{i_0,i;j_0,j_0}$, $i=i_0+1,\ldots,\Nbx$, one can easily see that for any $N_1\geq i' > i_0$,
\begin{align}
  \sum\limits_{i = i_0, \ldots, i'} u_{i,j_0}^{1} \mu_{i,j_0}
    =  u_{{i_0, i'; j_0,j_0}}, \quad \text{in} \,\, \Omega_{i_0, i'; j_0,j_0}. \label{eq:horz}  
  \end{align} 
Similarly, for the subdomains to the top of $\Omega_{i_0,j_0}$, we have that for any $N_2\geq j' > j_0$,
\begin{align}
  \sum\limits_{j = j_0, \ldots, j'} u_{i_0,j}^{1} \mu_{i_0, j}
    =  u_{{i_0, i_0; j_0,j'}}, \quad \text{in} \,\, \Omega_{i_0, i_0; j_0,j'}. \label{eq:vert}
\end{align}



The following result holds for the solutions of the subdomains in the upper-right direction of the subdomain $\Omega_{i_0,j_0}$.
\begin{lemmaa} \label{lemma:quadr}
  For the constant medium problem, suppose the source satisfies $\supp(f) \subset \Omega_{i_0,j_0}$, 
        then for any $N_1\geq i'>i_0$ and $N_2\geq j'>j_0$, we have
        \begin{align} \label{eq:quadr}
          \sum\limits_{\substack{i = i_0, \ldots, i'\\ j = j_0, \ldots, j' }}
          u_{i,j}^{1} \mu_{i,j}
          = u_{{i_0, i'; j_0,j'}}
          \quad \text{in} \,\, \Omega_{i_0, i'; j_0,j'}. 
        \end{align}
\end{lemmaa}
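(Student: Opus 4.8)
The plan is to argue by induction on $i'+j'$. The base cases $i'=i_0$ and $j'=j_0$ are exactly \eqref{eq:vert} and \eqref{eq:horz}, so it remains to establish \eqref{eq:quadr} for a pair $(i',j')$ with $i'>i_0$ and $j'>j_0$, assuming it already holds for $(i'-1,j')$ and for $(i',j'-1)$. The point is that the index set in \eqref{eq:quadr} for $(i',j')$ is the union of those for $(i'-1,j')$ and $(i',j'-1)$ together with the single corner index $(i',j')$, so the inductive step amounts to correctly appending the corner subdomain $\Omega_{i',j'}$.

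First I would record the structure of the first sweep. By the sweeping order \eqref{Sorder2D} together with Rules \ref{rule2d_a}--\ref{rule2d_b}, only the traces arriving in the $(-1,0)$ and $(0,-1)$ slots are active in Sweep $1$; since $f_{i,j}=0$ for $(i,j)\neq(i_0,j_0)$, this gives, for all $i\ge i_0$, $j\ge j_0$,
\[
u^{1}_{i,j}=\PotentialVec_{-1,0;\,i,j}\big(\mathbf g^{1}_{-1,0;\,i,j}\big)+\PotentialVec_{0,-1;\,i,j}\big(\mathbf g^{1}_{0,-1;\,i,j}\big),
\]
(with the term $u_{i_0,i_0;j_0,j_0}$ added when $(i,j)=(i_0,j_0)$), where $\mathbf g^{1}_{-1,0;\,i,j}$ is the Cauchy trace $(u^{1}_{i-1,j},\,A_{\Omega_{i-1,j}}\nabla u^{1}_{i-1,j}\cdot\n)$ on $\{x_1=\xi_i\}$ generated by the left neighbor and $\mathbf g^{1}_{0,-1;\,i,j}$ the analogous trace on $\{x_2=\eta_j\}$ generated by the bottom neighbor. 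For the inductive step I would then apply Lemma \ref{lemma:trace} to the regional PML problem $\P_{\Omega_{i_0,i';j_0,j'},\kappa}$, taking the $L$-shaped separating curve $\gamma$ to be the boundary of the open quadrant $Q=(\xi_{i'},\infty)\times(\eta_{j'},\infty)$; since $i_0<i'$ the source satisfies $\supp(f)\subset\Omega_{i_0,j_0}\subset\R^2\setminus\overline Q$, so Lemma \ref{lemma:trace} yields, with $u^{*}:=u_{i_0,i';j_0,j'}$,
\[
u^{*}\,\mu_{\R^2\setminus Q}+W\,\mu_{\overline Q}=u^{*}\quad\text{in }\R^2,\qquad W=0\ \text{ in }\ (\R^2\setminus Q)\setminus\gamma ,
\]
where $W$ is the trace potential of $u^{*}$ over $\gamma$. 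Because $\overline Q\cap\Omega_{i_0,i';j_0,j'}=\Omega_{i',j'}$ is a rectangle and $\kappa$ is constant, Lemma \ref{lemma:change_domain} lets me replace the regional fundamental solution in $W$ by the subdomain one $G_{i',j'}$; splitting the integral over $\gamma$ into its vertical and horizontal legs then gives $W=\PotentialVec_{-1,0;\,i',j'}(\cdot)+\PotentialVec_{0,-1;\,i',j'}(\cdot)$.

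The crux — and the step I expect to be the main obstacle — is to identify the two trace data in $W$ with those used by the algorithm. On the portion of the vertical leg that is the common edge of $\Omega_{i',j'}$ and $\Omega_{i'-1,j'}$, the induction hypothesis for $(i'-1,j')$ gives $u^{1}_{i'-1,j'}=u=u^{*}$ throughout $\Omega^{\circ}_{i'-1,j'}$, hence $u^{1}_{i'-1,j'}$ and $u^{*}$ share the same trace and normal derivative there; likewise on the horizontal leg via the hypothesis for $(i',j'-1)$, while on the remaining (PML) portions of $\gamma$ the contributions are reconciled with the full-line potential operators of \eqref{eq:subd_solve} using the exponential decay of the PML solutions and fundamental solutions (cf. Lemma \ref{lemma:singularity}). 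With the structure formula above this yields $W=u^{1}_{i',j'}$ in $\Omega_{i',j'}$. Finally, the two induction hypotheses identify $u^{*}\mu_{\R^2\setminus Q}$ with $\sum_{(i,j)\neq(i',j')}u^{1}_{i,j}\mu_{i,j}$ on $\Omega_{i_0,i';j_0,j'}$, and $W\mu_{\overline Q}$ with $u^{1}_{i',j'}\mu_{i',j'}$ there, so the displayed identity becomes \eqref{eq:quadr} for $(i',j')$. The genuinely delicate points are the bookkeeping of the discontinuous cutoff functions along subdomain interfaces (half- and quarter-weights), the verification that $\mu_{\R^2\setminus Q}$ restricted to $\Omega_{i_0,i';j_0,j'}$ agrees with $\sum_{(i,j)\neq(i',j')}\mu_{i,j}$, and the ``limit from the positive $\n$ side'' convention for all the trace potentials; handling these carefully is what turns the formal telescoping into a rigorous identity.
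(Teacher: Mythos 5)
Your overall strategy is the same as the paper's: induct, and realize the missing corner transfer by combining the two cardinal full-line potentials received by $\Omega_{i',j'}$ into an L-shaped trace potential over $\partial Q$, which Lemmas \ref{lemma:trace} and \ref{lemma:change_domain} identify with the regional solution on the quadrant. (The paper organizes the inductive step as a $2\times2$ block decomposition of $\Omega_{i_0,i';j_0,j'}$ into $\Omega_{i_0,i'-1;j_0,j'-1}$, $\Omega_{i',i';j_0,j'-1}$, $\Omega_{i_0,i'-1;j',j'}$ and $\Omega_{i',j'}$ rather than via the two overlapping hypotheses $(i'-1,j')$ and $(i',j'-1)$, but that difference is cosmetic.)

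The gap is in the step you yourself flag as the crux, and it is not merely bookkeeping. To get $W=u^{1}_{i',j'}$ you must show that the \emph{full-line} potential $\PotentialVec_{-1,0;\,i',j'}$ applied to the trace of $u^{1}_{i'-1,j'}$ on all of $\{x_1=\xi_{i'}\}$ coincides with the \emph{half-line} potential of $u^{*}$ over $\{\xi_{i'}\}\times[\eta_{j'},\infty)$ (and similarly for the horizontal leg). This needs two exact identities: (i) the relevant trace data vanish identically on $\{\xi_{i'}\}\times(-\infty,\eta_{j'})$, and (ii) they agree exactly with the trace of $u^{*}$ on all of $\{\xi_{i'}\}\times[\eta_{j'},\infty)$, including the portion above $\eta_{j'+1}$ lying inside the PML. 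Neither follows from the induction hypothesis \eqref{eq:quadr}, which is an identity only \emph{in} the region $\Omega_{i_0,i'-1;j_0,j'}$ and says nothing about traces on the extensions of the interface lines beyond it; and your proposed substitute --- reconciling the remaining portions ``using the exponential decay'' --- cannot work, since exponential smallness of the tails would at best give approximate equality of potentials, while the lemma asserts an exact identity. What the paper actually uses are the half-plane versions of the transfer identities (the analogues of \eqref{eq:u2x2lower_t}--\eqref{eq:u2x2left_t}, which hold in entire half-planes and hence on the PML portions of the transfer lines, because the uniaxial stretchings of the subdomain and regional problems agree there), together with the fact that the potential received from below is supported on the single line $\{x_2=\eta_{j'}\}$ and therefore vanishes identically beneath it by the last assertion of Lemma \ref{lemma:trace}. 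To close your induction you must carry these stronger half-plane identities along as part of the inductive hypothesis; the in-region statement \eqref{eq:quadr} alone is too weak to control the traces your potentials integrate over.
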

\begin{proof}
  First we consider the subdomain $\Omega_{i_0+1,j_0+1}$, of which the local problem is solved at step $i_0+j_0+1$ in the first sweep, as shown in Figure \ref{fig:ddm_2x2t}.
  The local problem of its lower left neighbor subdomain $\Omega_{i_0,j_0}$ is solved two steps before, as shown in Figure \ref{fig:sf22_corner_trace}-(a).
  The local problems of its lower neighbor subdomain $\Omega_{i_0+1,j_0}$ and  left neighbor subdomain $\Omega_{i_0,j_0+1}$ are solved one step before, as shown in Figure \ref{fig:sf22_corner_trace}-(b) and (d), respectively.
  Consequently, for the region $\Omega_{i_0,i_0+1;j_0,j_0+1}$ with $2 \times 2$ subdomains, three subdomain problems have been solved and the solution within them has been constructed, while only the  subdomain problem of $\Omega_{i_0+1,j_0+1}$  is left to be solved.
  For convenience, let us define
  $\betaH_{\rightarrow} = \betaH_{+1;i_0}^{(1)}$,
  $\betaH_{\uparrow} = \betaH_{+1;j_0}^{(2)}$,
  $\betaH_{\leftarrow} = \betaH_{-1;i_0+1}^{(1)}$,
  $\betaH_{\downarrow} = \betaH_{-1;j_0+1}^{(2)}$,
    $\widehat{\Omega} = \Omega_{i_0,i_0+1;j_0,j_0+1}$, $\widehat{u} = u_{i_0,i_0+1;j_0,j_0+1}$,
  and omit the superscripts of the subdomain solutions which indicate the first sweep, i.e., $u_{i,j} = u_{i,j}^{1}$.

%
\def\wdff{0.4}
\begin{figure*}[!ht]    
        \centering
        \begin{minipage}[t]{\wdff\linewidth}
                \centering
                \includegraphics[width=0.9\textwidth]{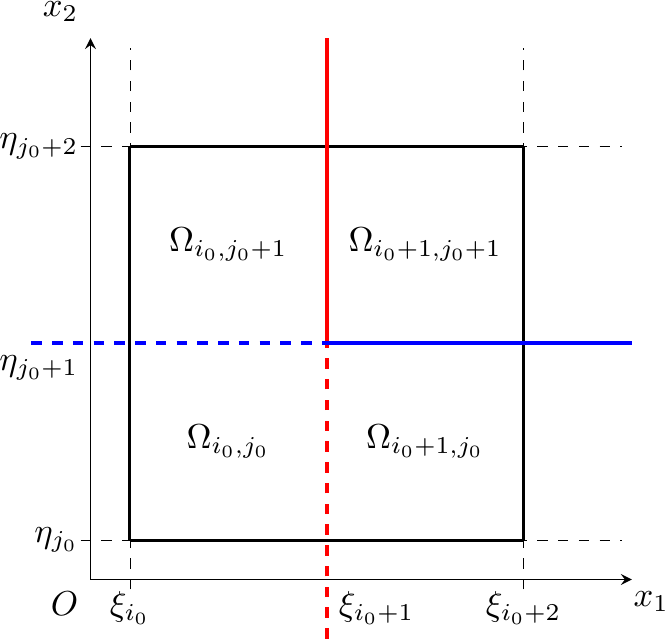}
        \end{minipage}
        \caption{
          The trace transfer for solving the subdomain problem of ${\Omega_{i_0+1,j_0+1}}$.
          The interface represented by the dotted and solid red lines is used for horizontal trace transfer on $\Omega_{i_0,i_0+1;j_0,j_0}$ at step $i_0+j_0$,
          the interface represented by the dotted and solid blue line is used for vertical trace transfer on $\Omega_{i_0,i_0;j_0,j_0+1}$ at step $i_0+j_0$,
          and the interface $\gamma_{\uprightarrow}$ represented by the solid blue and red lines is used for the trace transfer on $\Omega_{i_0,i_0+1;j_0,j_0+1}$ at step $i_0+j_0+1$.
                \label{fig:ddm_2x2t}}\vspace{0.25cm}
\end{figure*}

%
\def\wdff{0.22}
\begin{figure*}[!ht]    
        \centering
        \begin{minipage}[t]{\wdff\linewidth}
                \centering
                \includegraphics[width=0.9\textwidth]{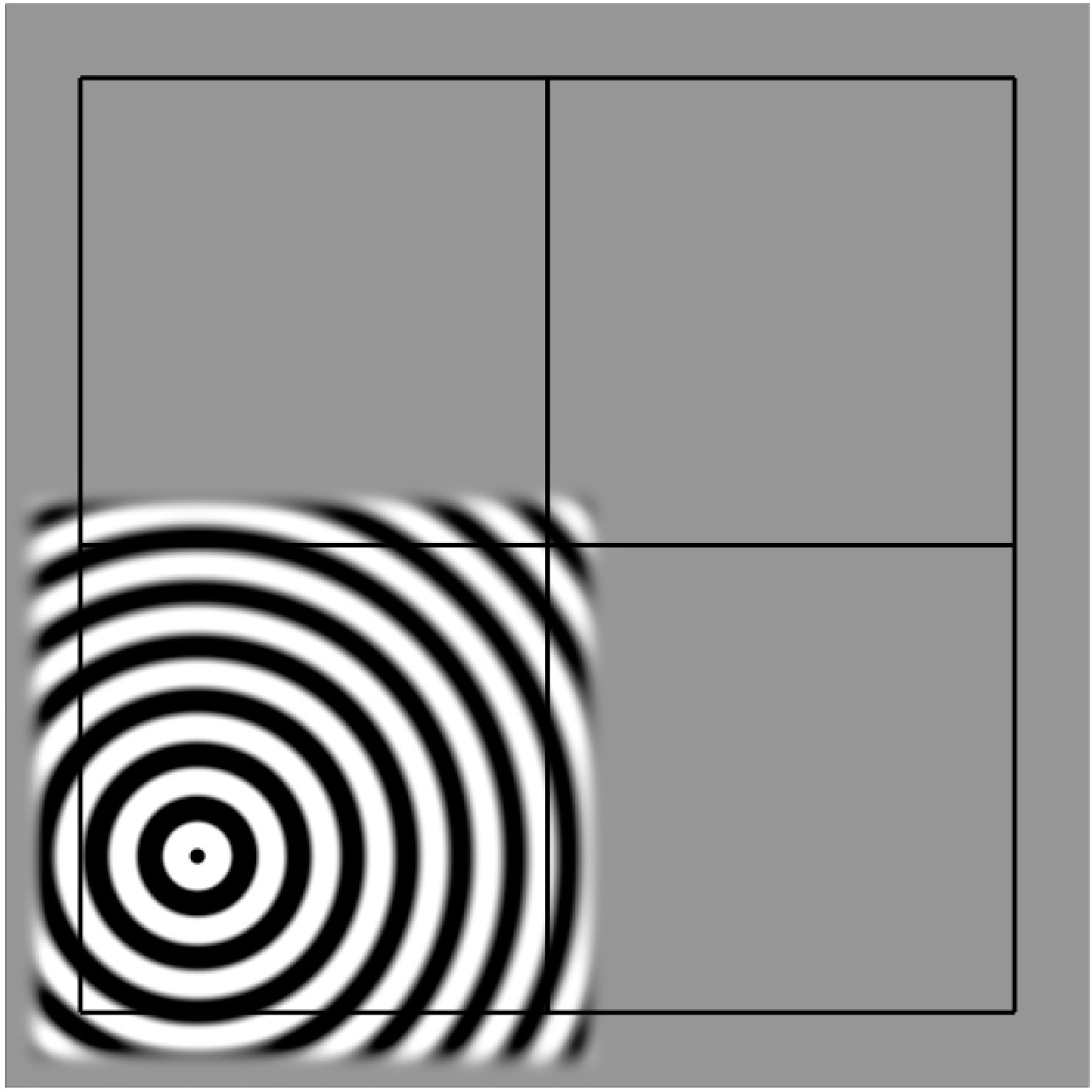}\\
                (a) $u_{i_0,j_0}$
        \end{minipage}
        \begin{minipage}[t]{\wdff\linewidth}
                \centering
                \includegraphics[width=0.9\textwidth]{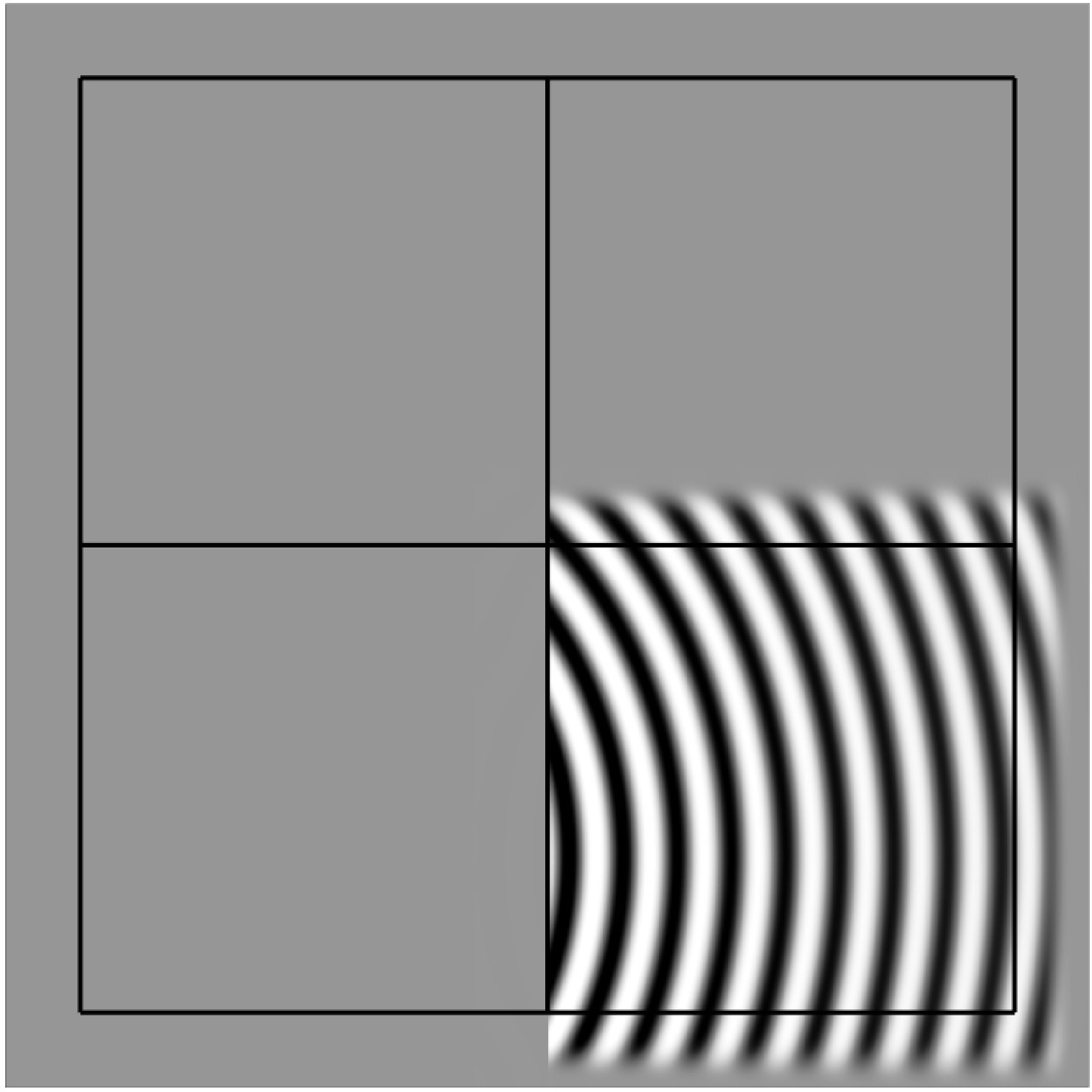}\\
                (b) $u_{i_0+1,j_0}$
        \end{minipage}
        \begin{minipage}[t]{\wdff\linewidth}
                \centering
                \includegraphics[width=0.9\textwidth]{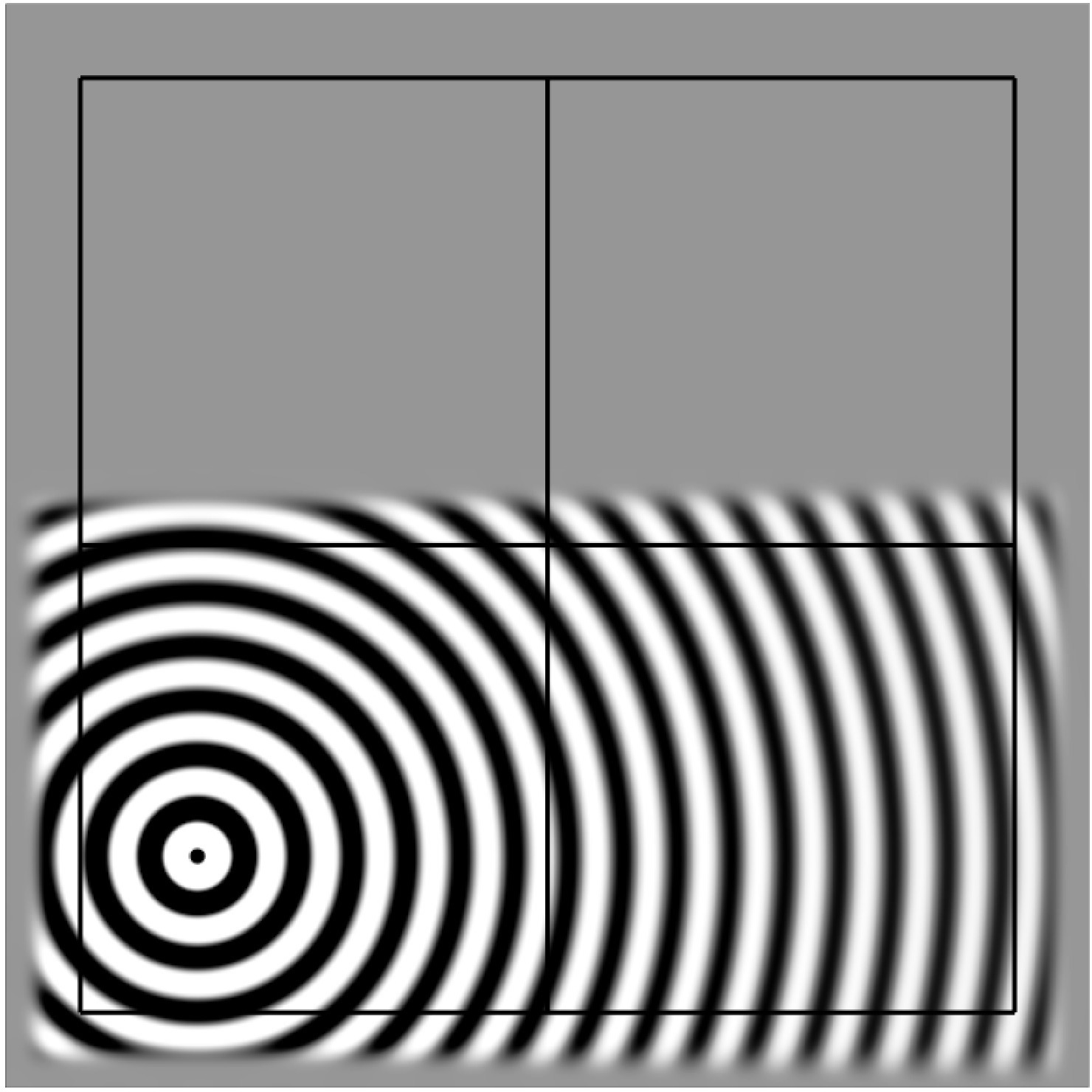}\\
                (c) $u_{i_0,j_0} \mu_{\rightarrow} + u_{i_0+1,j_0}$
        \end{minipage}
        \begin{minipage}[t]{\wdff\linewidth}    
                \centering
                \includegraphics[width=0.9\textwidth]{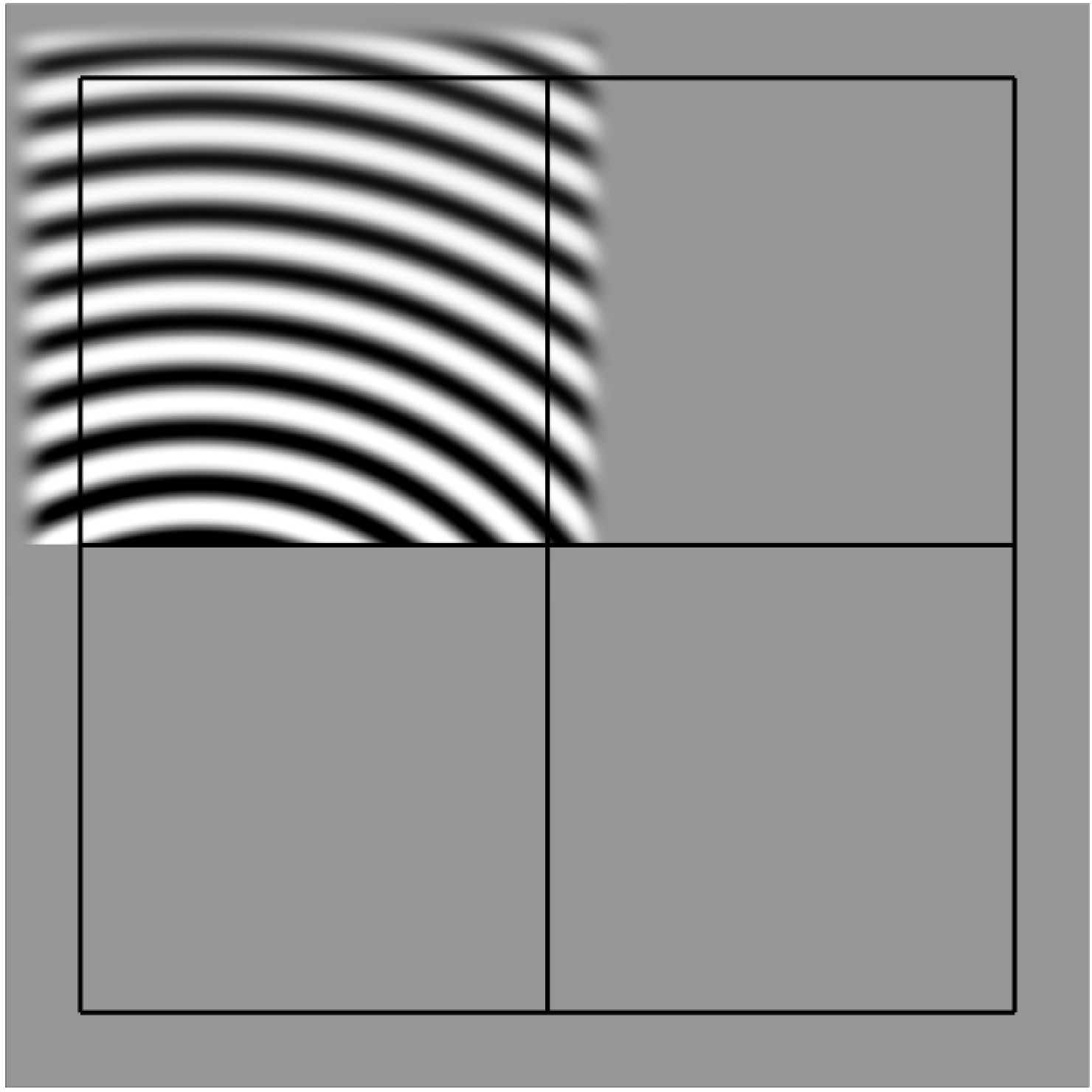}\\
                (d) $u_{i_0,j_0+1}$
        \end{minipage}  \\      
        \vspace{0.2cm}
        
        \begin{minipage}[t]{\wdff\linewidth}
                \centering
                \includegraphics[width=0.9\textwidth]{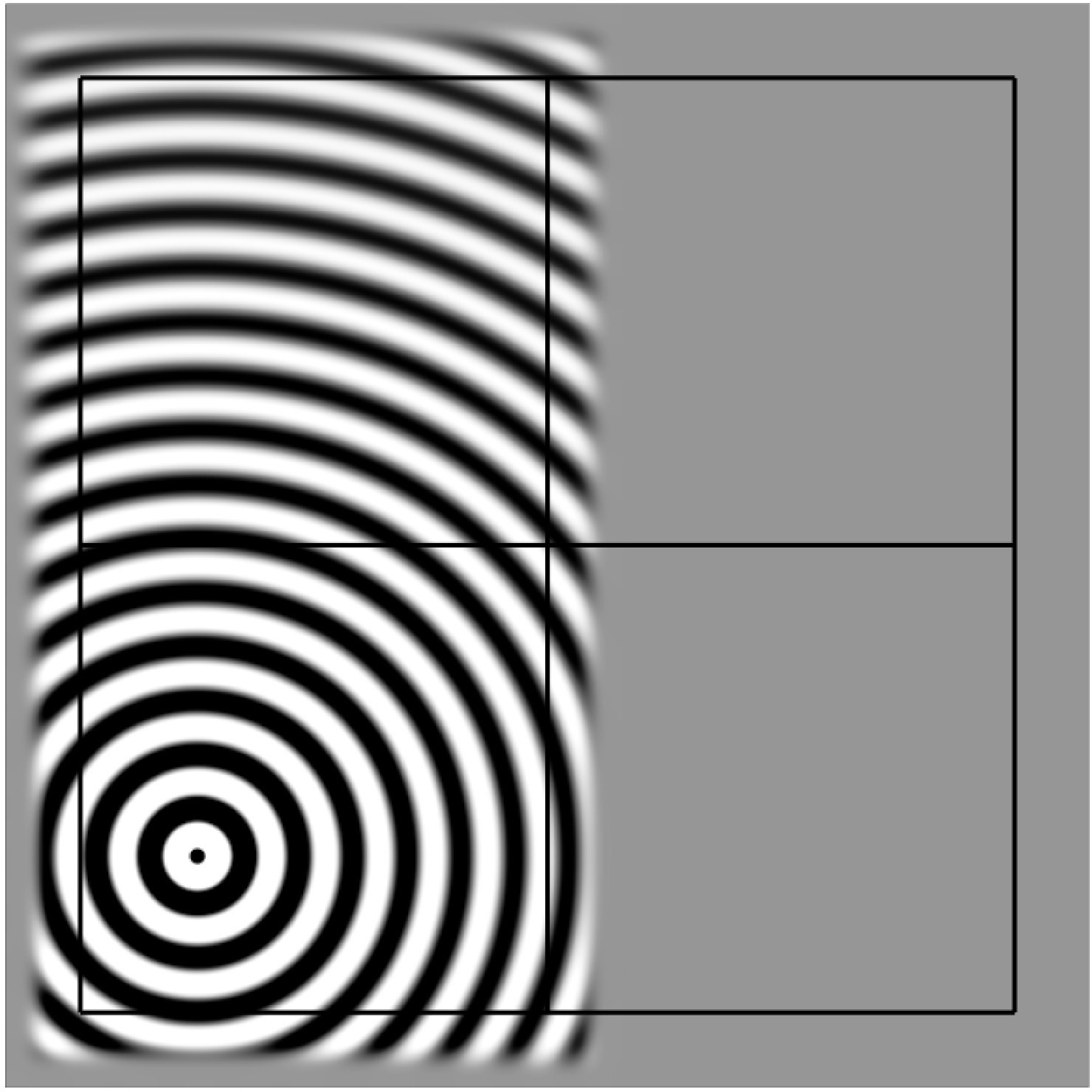}\\
                (e) $u_{i_0,j_0} \mu_{\uparrow} + u_{i_0,j_0+1}$
        \end{minipage}
        \begin{minipage}[t]{\wdff\linewidth}
                \centering
                \includegraphics[width=0.9\textwidth]{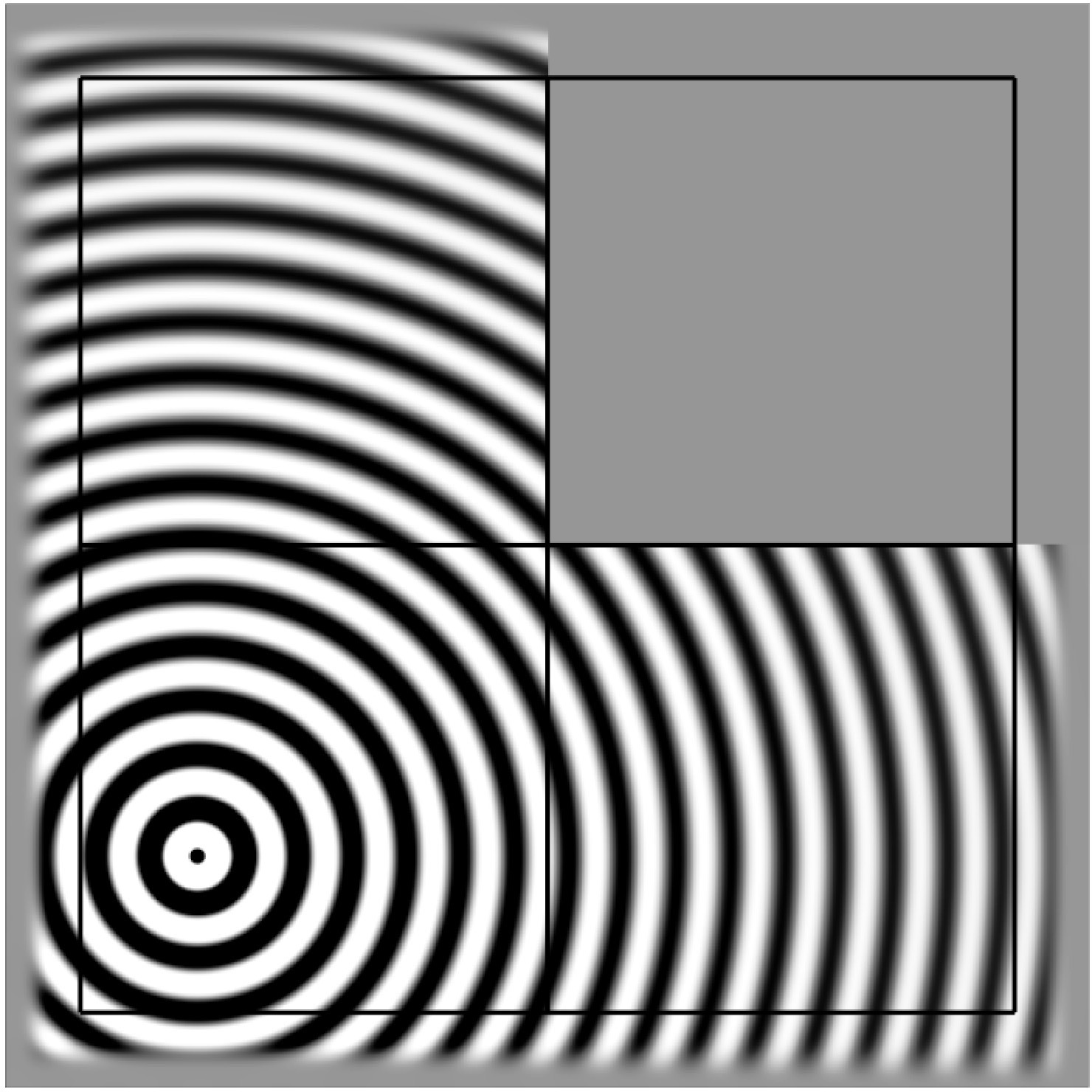}\\
                (f) $\widehat{u} {\mu}_{\uprightarrow} $
        \end{minipage}
        \begin{minipage}[t]{\wdff\linewidth}    
                \centering
                \includegraphics[width=0.9\textwidth]{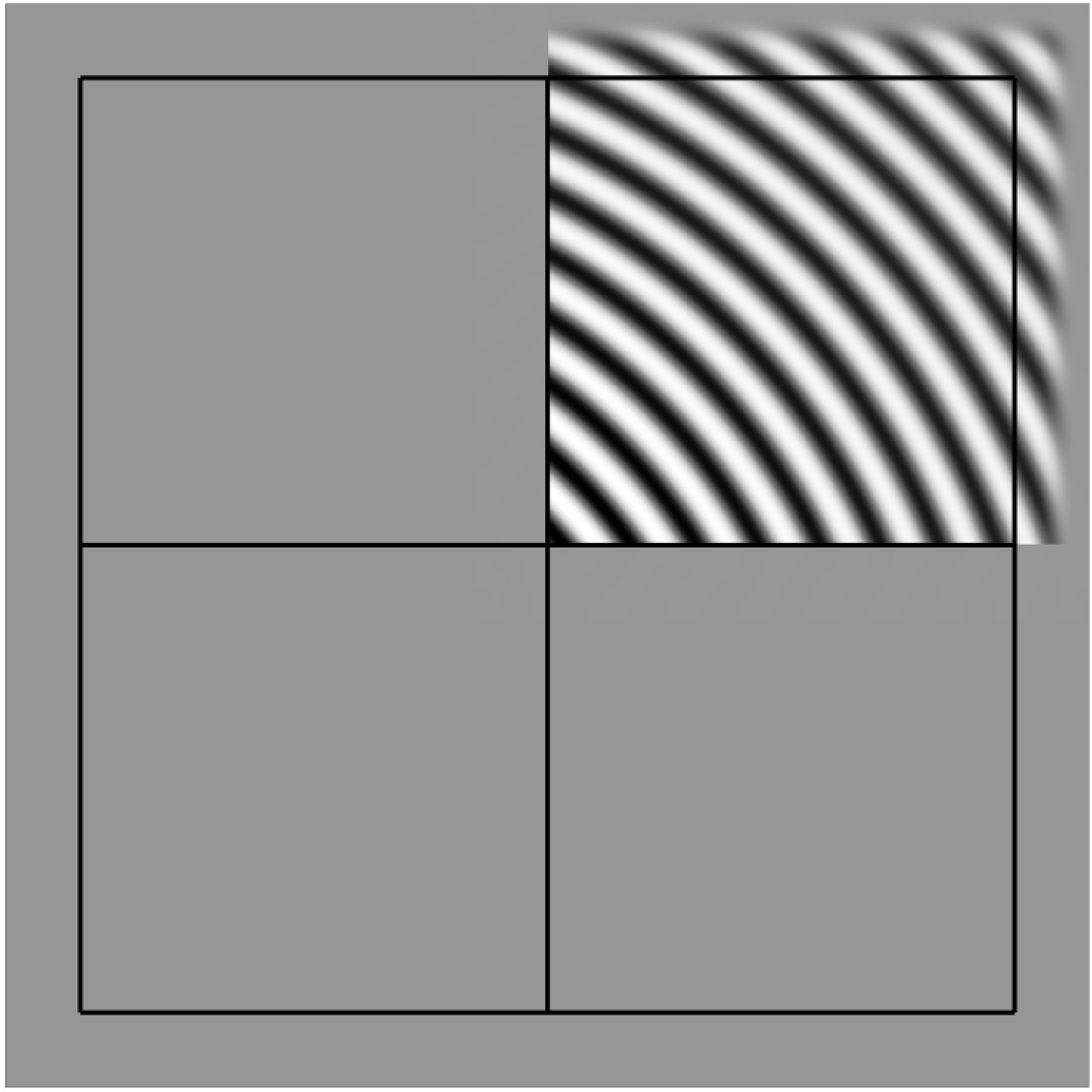}\\
                (g) $ u_{i_0+1,j_0+1}$ 
        \end{minipage}
        \begin{minipage}[t]{\wdff\linewidth}
                \centering
                \includegraphics[width=0.9\textwidth]{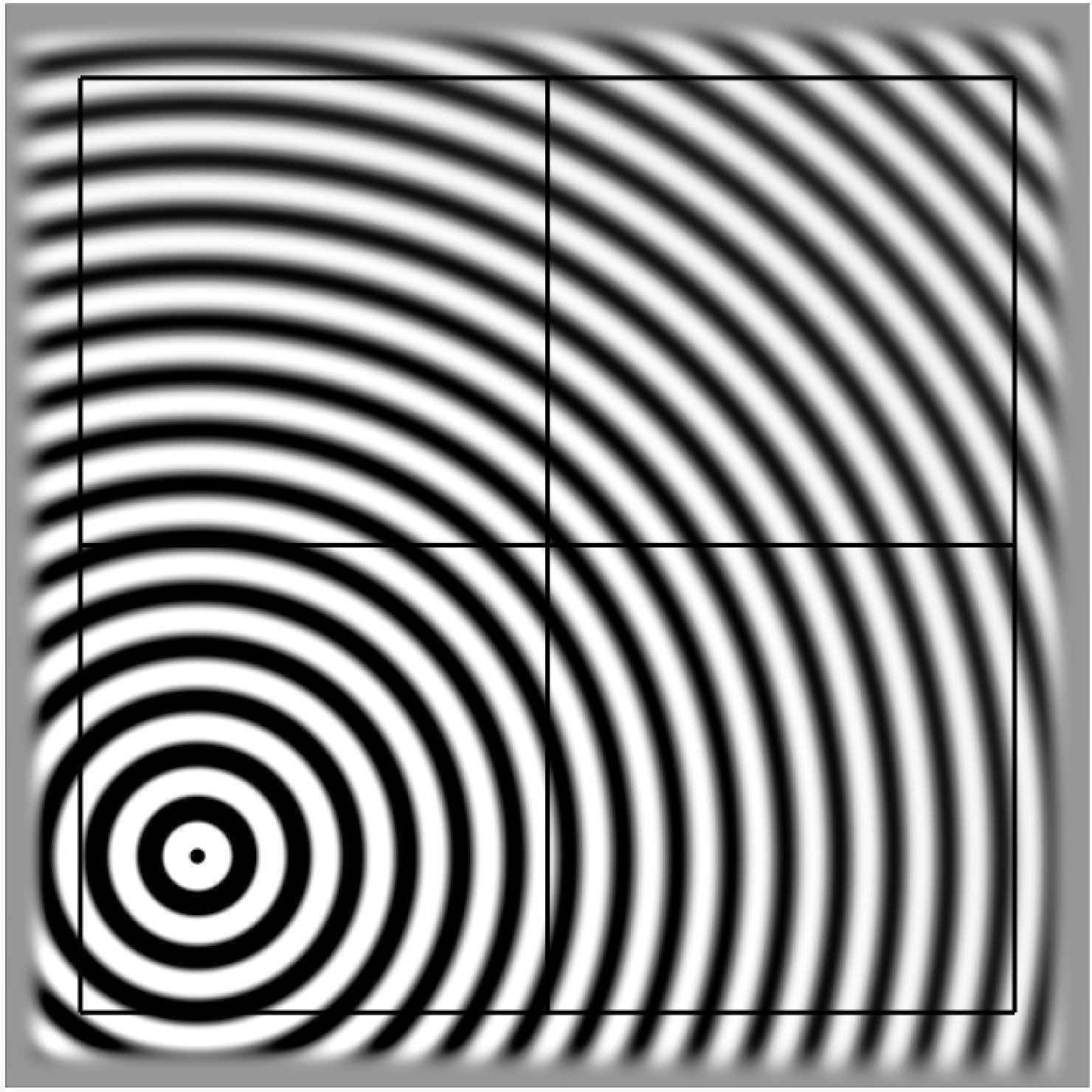}\\
                (h) $\widehat{u}$
        \end{minipage}          
        \vspace{-0.0cm}
        \caption{{Illustration of the process of trace transfer for $2\times 2$ subdomains. } \label{fig:sf22_corner_trace}}
\end{figure*}

By applying the horizontal trace transfer on $\Omega_{i_0,i_0+1;j_0,j_0}$, we know that by Lemma \ref{lemma:change_domain},
\begin{equation} \label{eq:u2x2lower_t}
        u_{i_0,j_0}\betaH_{\rightarrow} + u_{i_0+1,j_0} \betaH_{\leftarrow} = \widehat{u}, \qquad \qquad \text{in} 
        \quad (-\infty,+\infty)\times(-\infty,\eta_{j_0+1}\plusd)
\end{equation}
as shown in Figure \ref{fig:sf22_corner_trace}-(c). 
Similarly, by applying the vertical trace transfer on $\Omega_{i_0,i_0;j_0,j_0+1}$,  we know that by Lemma \ref{lemma:change_domain},
\begin{equation} \label{eq:u2x2left_t}
        u_{i_0,j_0}\betaH_{\uparrow} + u_{i_0,j_0+1}\betaH_{\downarrow} = \widehat{u}, \qquad \qquad \text{in} 
        \qquad (-\infty, \xi_{i_0+1}\plusd)\times(-\infty,+\infty)
\end{equation}
as shown in Figure \ref{fig:sf22_corner_trace}-(e).

Base on \eqref{eq:subd_solve}, the subdomain problem of $\Omega_{i_0+1,j_0+1}$ is solved with the upward transferred trace
from  $\Omega_{i_0+1,j_0}$, and the rightward transferred trace 
from  $\Omega_{i_0,j_0+1}$.
By using \eqref{eq:u2x2lower_t}, \eqref{eq:u2x2left_t} and the fact that $u_{i_0+1,j_0} = 0$ on $(-\infty,\xi_{i_0+1} \plusd) \times \{\eta_{j_0+1} \plusd\}$, and $u_{i_0,j_0+1} = 0$ on $ \{\xi_{i_0+1} \plusd\} \times (-\infty,\eta_{j_0+1} \plusd)$, we see that
\begin{align*}
  u_{i_0+1,j_0+1} = \;& \Potential_{0,-1;i_0+1,j_0+1}(u_{i_0+1,j_0}) + \Potential_{-1,0;i_0+1,j_0+1}(u_{i_0,j_0+1})  \\
  = & \int_{\gamma_{\uprightarrow}}
  J_{\Omega_{i_0+1,j_0+1}}^{-1}G_{i_0+1,j_0+1}(\bx,\by)\big(A_{\Omega_{i_0+1,j_0+1}}\nabla_{\by} \widehat{u}(\by)\cdot \n_{\uprightarrow} \big) \\
   &\qquad-\widehat{u}\Big(A_{\Omega_{i_0+1,j_0+1}}\nabla_{\by}\big(J_{\Omega_{i_0+1,j_0+1}}^{-1}G_{i_0+1,j_0+1}(\bx,\by)\big)\cdot \n_{\uprightarrow}  \Big)d\by,
\end{align*}
where $\gamma_{\uprightarrow} = \{(x_1,x_2) \;|\; x_1=\xi_{i_0+1}\plusd, x_2\geq\eta_{j_0+1}\plusd \,\,\text{or}\,\, x_1\geq\xi_{i_0+1}\plusd, x_2=\eta_{j_0+1}\plusd\}$, and $\n_{\uprightarrow}$ is the unit normal of $\gamma_{\uprightarrow}$ pointing to $\Omega_{i_0+1,j+0+1}$,  as shown in Figure \ref{fig:ddm_2x2t}.

By Lemma \ref{lemma:change_domain} and applying the trace transfer on $\widehat{\Omega}$, we know that
\begin{align} \label{eq:u2x2corner_t}
  \widehat{u} {\betaH}_{\uprightarrow} + u_{i_0+1,j_0+1} \betaH_{\leftarrow} \betaH_{\downarrow}= \widehat{u},
  \end{align}
where ${\betaH}_{\uprightarrow} = 1 -
(1-\betaH_{\rightarrow})(1-\betaH_{\uparrow})$,
as is  shown in Figure \ref{fig:sf22_corner_trace}-(f) to (h).
{\em Note that this also indicates that the corner directional trace transfer is implicitly carried out by combining horizontal and vertical trace transfers, which is 
the essential difference with the source transfer-based method used in \cite{Leng2020}}.
From \eqref{eq:u2x2corner_t} we know that
\begin{equation}
u_{i_0,j_0}\betaH_{\rightarrow}\betaH_{\uparrow} +
u_{i_0+1,j_0}\betaH_{\leftarrow}\betaH_{\uparrow} + u_{i_0,j_0+1} \betaH_{\rightarrow}\betaH_{\downarrow} +
u_{i_0+1,j_0+1}\betaH_{\leftarrow}\betaH_{\downarrow} = \widehat{u}.
\end{equation}
Thus we obtain that \eqref{eq:quadr} holds for the case $i' = i_0+1$
and $j'=j_0+1$.  After solving the subdomain problem $\Omega_{i_0+1,j_0+1}$, the upward and rightward transferred traces
are then generated from the subdomain $\Omega_{i_0+1,j_0+1}$, while
both the leftward and downward transferred traces are zero.

Next we will prove by induction that for any $s > 0$, \eqref{eq:quadr} holds for $i'$ and $j'$ that $i' + j' \leq i_0 + j_0 + s$, $N_1\geq i' \geq i_0$, $N_2\geq j' \geq j_0$.
The cases for $s = 1,2$ has been proved in the above, now assume \eqref{eq:quadr} holds for $s \leq t$ , we will prove it also holds for $s = t+1$.
For the subdomain $\Omega_{i',j'}$ that $i' + j' = i_0 + j_0 + t + 1$, $i' \geq i_0$, $j' \geq j_0$, we can divide the region $\Omega_{i_0,i';j_0,j'}$ into four regions, namely $\Omega_{i_0,i'-1;j_0,j'-1}$, $\Omega_{i_0,i'-1;j',j'}$, $\Omega_{i',i';j_0,j'-1}$ and $\Omega_{i',j'}$, apply a similar argument for the $2\times 2$ subregions as the above for  $\Omega_{i_0,i_0+1;j_0,j_0+1}$, and we can obtain
\begin{align*}
  u_{i_0,i';j_0,j'} =\;& 
  U_{i_0,i'-1;j_0,j'-1}\betaH_{+1;i'-1}^{(1)} \betaH_{+1;j'-1}^{(2)} 
  + U_{i',i';j_0,j'-1}  \betaH_{-1;i'}^{(1)} \betaH_{+1;j'-1}^{(2)} \\
  &+ U_{i_0,i'-1;j',j'} \betaH_{+1;i'-1}^{(1)} \betaH_{-1;j'}^{(2)} 
    + u_{i',j'} \betaH_{-1;j'}^{(1)} \betaH_{-1;j'}^{(2)},   
\end{align*}
where $U_{i_1, i_2; j_1, j_2} = \sum_{\substack{i_1 \leq i \leq i_2 \\ j_1 \leq j \leq j_2  }} u_{i, j} \mu_{i,j} $,
which completes the proof.
\end{proof}

%
%
%
%

The application of cardinal trace transfers have been demonstrated in the construction of the subdomain solutions in the first sweep through Lemma
\ref{lemma:quadr}, and it is similar for all remaining sweeps.
However, we still  need to ensure that each subdomain problem is solved with the right transferred traces passed from its neighbor subdomains, and the following result
can be obtained.

\begin{lemmaa} \label{lemma:single_src}
  Suppose that $\supp(f) \subset \Omega_{i_0,j_0}$, then the DDM solution $u_{\text{DDM}}$ of Algorithm \ref{alg:diag2D_t} is indeed the exact solution to the problem $\P_{\Omega,\kappa}$ with the source $f$ in the constant medium case.
\end{lemmaa}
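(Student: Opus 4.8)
The plan is to deduce the statement from four applications of the Sweep-1 analysis already performed in Lemma~\ref{lemma:quadr} (together with \eqref{eq:horz}--\eqref{eq:vert}), one for each diagonal direction, glued by an accounting of how Rules~\ref{rule2d_a}--\ref{rule2d_b} route the transferred traces. First I would pin down that routing: a trace propagating in a cardinal direction $\d$ can be consumed only in a sweep whose diagonal direction $\d_{l'}$ satisfies $\d\cdot\d_{l'}>0$ and that is not opposite to the sweep in which it was generated. In particular the rightward and upward traces created in Sweep~1 stay inside Sweep~1, while the leftward (resp.\ downward) traces created there are deposited into the local traces used by Sweep~2 (resp.\ Sweep~3). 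Consequently, in Sweep~1 each subdomain $\Omega_{i,j}$ with $i\ge i_0$, $j\ge j_0$ receives in \eqref{eq:subd_solve} exactly the rightward trace from $\Omega_{i-1,j}$ and the upward trace from $\Omega_{i,j-1}$ and nothing else, which is precisely what the proof of Lemma~\ref{lemma:quadr} assumes; combined with \eqref{eq:horz}, \eqref{eq:vert} and Lemma~\ref{lemma:quadr} taken with $i'=\Nbx$, $j'=\Nby$, this gives $\sum_{i,j}u_{i,j}^{1}\mu_{i,j}=u$ on the upper-right quadrant region $\Omega_{i_0,\Nbx;j_0,\Nby}$ while $u_{i,j}^{1}=0$ whenever $i<i_0$ or $j<j_0$.

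Next I would run the same argument on Sweeps~2, 3 and 4. The leftward traces that escaped Sweep~1 are exactly the cardinal traces on the interfaces $x_1=\xi_i$ ($i\ge i_0$) of the solution already reconstructed, and the escaped downward traces are the analogous ones on $x_2=\eta_j$ ($j\ge j_0$); feeding them into the later sweeps makes each of those a copy of the Sweep-1 construction, now rooted on interface data instead of a volume source. Thus Sweep~2 extends the reconstruction leftwards and yields $u$ on $\Omega_{1,i_0;j_0,\Nby}$, Sweep~3 extends it downwards on the right and yields $u$ on $\Omega_{i_0,\Nbx;1,j_0}$, and Sweep~4 --- supplied by the traces escaping Sweeps~2 and~3 along $x_1=\xi_{i_0}$ and $x_2=\eta_{j_0}$ --- yields $u$ on the last quadrant $\Omega_{1,i_0;1,j_0}$; throughout, Lemma~\ref{lemma:change_domain} is the tool (already invoked inside the proof of Lemma~\ref{lemma:quadr}) that lets one pass from the regional fundamental solutions to the subdomain ones that actually occur in \eqref{eq:subd_solve}. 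Since these four closed quadrant regions cover $\Omega$ and, in the constant medium case, all four regional reconstructions agree with the single Helmholtz solution on every overlap, summing against the partition of unity $\{\mu_{i,j}\}$ finally gives $u_{\mathrm{DDM}}=u$.

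The step I expect to be the real obstacle is the overlap accounting. A subdomain lying in a quadrant already reconstructed by an earlier sweep still picks up nonzero potentials from the traces inherited by the later sweeps, so one must show that, restricted to any already-reconstructed subregion, the potentials contributed by all later sweeps add up to zero, leaving only the genuinely new quadrant altered. This is the combinatorial heart of the proof; it is handled in the same way as in the source-transfer version \cite{Leng2020}, by using the trace-transfer identities of Lemmas~\ref{lemma:trace} and~\ref{lemma:change_domain} together with the priorities fixed in Rules~\ref{rule2d_a}--\ref{rule2d_b} (which prescribe exactly which trace is available to which subdomain in which sweep) to recognize each such extra potential as part of a trace-transfer identity for a field already represented, hence as contributing nothing to the total. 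Making this cancellation precise --- sweep by sweep and, within each sweep, step by step, exactly as in the proof of Lemma~\ref{lemma:quadr} --- is the only laborious part of the argument; the rest is bookkeeping.
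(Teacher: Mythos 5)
Your proposal is correct and follows essentially the same route as the paper: the paper's proof is exactly this bookkeeping — the Rules route the rightward/upward traces within Sweep 1, the leftward ones to Sweep 2, the downward ones to Sweep 3, and the leftovers from Sweeps 2 and 3 to Sweep 4, with Lemma \ref{lemma:quadr} (plus \eqref{eq:horz}--\eqref{eq:vert} and Lemma \ref{lemma:change_domain}) supplying the quadrant-by-quadrant reconstruction — except that the paper presents it as a step-by-step verification on a representative $5\times 5$ partition with the source in $\Omega_{3,3}$ rather than in the general form you give. The ``overlap accounting'' you single out is resolved in the paper by the observation, already made inside the proof of Lemma \ref{lemma:quadr}, that subdomains strictly interior to an already-reconstructed quadrant emit zero leftward/downward traces, so the only nonzero traces escaping to later sweeps live on the quadrant's boundary interfaces, which is the same cancellation you describe.
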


\begin{proof}
        

The proof  is similar to the verification for the source transfer method in \cite{Leng2020}, and the main difference is that  the transferred traces only come from neighbor subdomain in cardinal directions at each step in all sweeps, while the transferred sources come  from neighbor subdomains in both cardinal and corner directions.
%
Without loss of generality, we use a $5\times 5$ ($N_1=N_2=5$) domain partition and a source lying in $\Omega_{3,3}$ ($i_0=j_0=3$) for verification, and the movement of transferred traces in each sweep is carefully checked in the following.

\def\wdff{0.3}
\begin{figure*}[!ht]
        \centering
        \begin{minipage}[t]{\wdff\linewidth}
                \centering
                \includegraphics[width=0.9\textwidth]{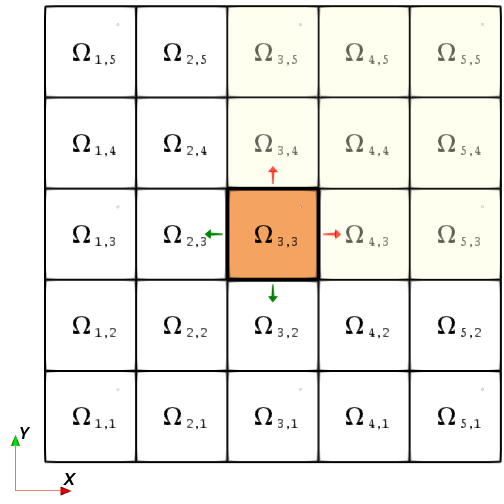}\\
                (a) First sweep: step 5 
        \end{minipage}
        \begin{minipage}[t]{\wdff\linewidth}
                \centering
                \includegraphics[width=0.9\textwidth]{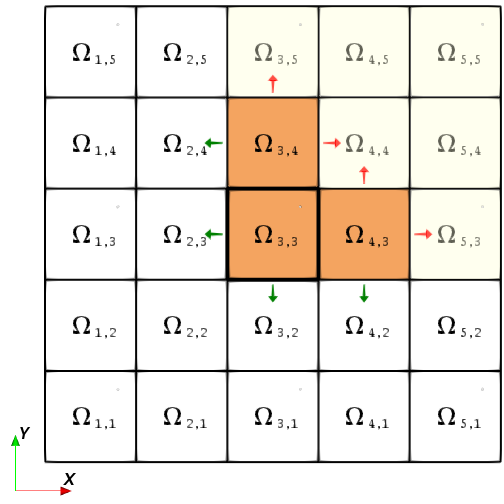}\\
                (b) First sweep: step 6
        \end{minipage}
        \begin{minipage}[t]{\wdff\linewidth}
                \centering
                \includegraphics[width=0.9\textwidth]{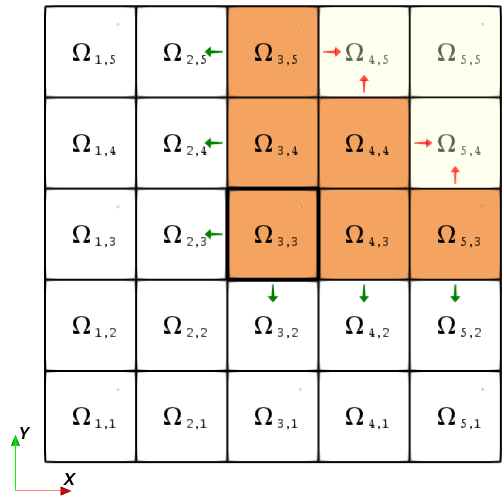}\\
                (c) First sweep: step 7 
        \end{minipage}
        
        \vspace{0.25cm}
        \begin{minipage}[t]{\wdff\linewidth}
                \centering
                \includegraphics[width=0.9\textwidth]{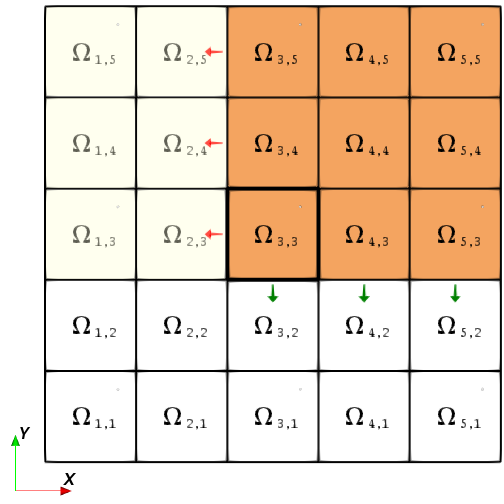}\\
                (d) After first sweep 
        \end{minipage}
        \begin{minipage}[t]{\wdff\linewidth}
                \centering
                \includegraphics[width=0.9\textwidth]{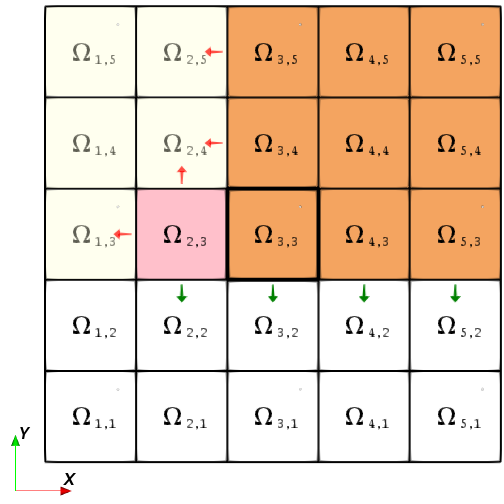}\\
                (e) Second sweep: step 6 
        \end{minipage}
        \begin{minipage}[t]{\wdff\linewidth}    
                \centering
                \includegraphics[width=0.9\textwidth]{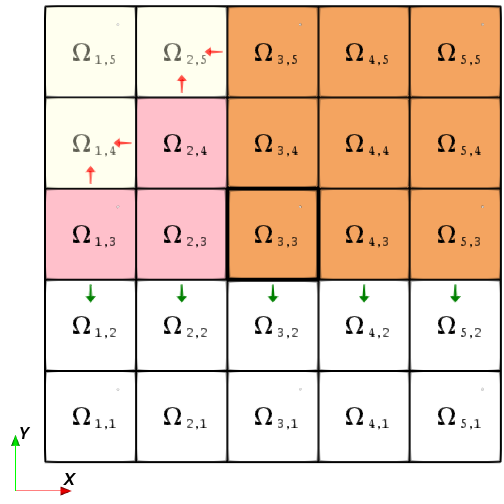}\\
                (f) Second sweep: step 7 
        \end{minipage}
        \vspace{-0.0cm}
        \caption{The first sweep $(+1,+1)$ and the second sweep $(-1,+1)$ of the trace transfer-based diagonal sweeping DDM in $\R^2$, where the source  lies in $\Omega_{3,3}$. 
                The arrows denote the  transferred traces with their directions, the red ones are in the similar direction of the current sweep,
                while the green ones are not, thus only the red ones are used in the current sweep due to   Rule \ref{rule2d_a} (similar directions).  
                 \label{fig:sweep2d_12}}        
\end{figure*}

%
In the {\bf first sweep} of direction $(+1,+1)$, the solution in the upper-right region $\Omega_{i_0,N_1;j_0,N_2}=\Omega_{3,5;3,5}$ is to be constructed.
The sweep contains $N_1+N_2-1 = 5+5-1=9$ steps, and the group of subdomain problems of $\{\Omega_{i,j}\}$ with $(i-1)+(j-1)+1=s$  are solved at the $s$-th step of the sweep.
The subdomain solutions are all zeros in the first $(i_0-1)+(j_0-1)=4$ steps since the local sources are all zeros.
Then at step $(i_0-1)+(j_0-1)+1=5$, the subdomain problem of $\Omega_{i_0,j_0}=\Omega_{3,3}$ is solved with the source $f_{3,3}=f$, and four transferred source in cardinal directions are generated, as shown in Figure  \ref{fig:sweep2d_12}-(a).
At step 6, the subdomain problems in $\Omega_{4,3}$ and $\Omega_{3,4}$ are solved,
each of them takes one transferred trace from $\Omega_{3,3}$, and generates three new transferred traces, as shown in \ref{fig:sweep2d_12}-(b).
At step 7, the subdomain problems of $\Omega_{5,3}$ and $\Omega_{3,5}$ are solved in the similar way to the previous step, while the subdomain problem of $\Omega_{4,4}$ is solved with the transferred traces from $\Omega_{3,4}$ and $\Omega_{4,3}$,
and two new ones are generated, as shown in \ref{fig:sweep2d_12}-(c).
In the following steps, the subdomain problems are solved similarly, and after $\Nbx+\Nby-1=9$ steps the exact solution in the upper-right region is obtained.

%
%

In the {\bf second sweep} of direction $(-1,+1)$, the solution in the upper-left region $\Omega_{1,i_0-1;j_0,N_2}={\Omega_{1,2;\, 3,5}}$ is to be constructed, and the group of subdomain problems of $\{\Omega_{i,j}\}$ with $(N_1-i)+(j-1)+1=5-i+j=s$ are solved at the $s$-th step of the sweep.  Among all the unused transferred traces from the first sweep, the ones in the similar direction with this sweep are needed while the others are not (Rule \ref{rule2d_a}), as shown in Figure \ref{fig:sweep2d_12}-(d).  Note that  Rule \ref{rule2d_b} doesn't apply here since the first and second sweeps are not in the opposite direction.  The steps of this sweep are similar to the steps of the first sweep, hence the details are omitted. Steps 6 and 7 are shown in Figure \ref{fig:sweep2d_12}-(e) and (f), and after $\Nbx+\Nby-1=9$ steps the exact solution in the upper-left region is obtained.

\def\wdff{0.3}
\begin{figure*}[ht!]
  \centering
  \begin{minipage}[t]{\wdff\linewidth}
    \centering
    \includegraphics[width=0.9\textwidth]{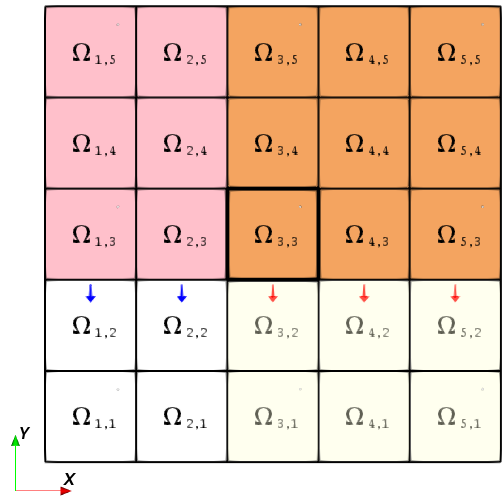}\\
    (a) Before third sweep 
  \end{minipage}
  \begin{minipage}[t]{\wdff\linewidth}
    \centering
    \includegraphics[width=0.9\textwidth]{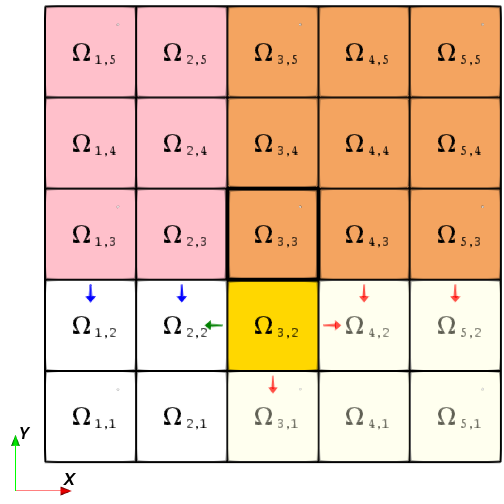}\\
    (b) Third sweep: step 6 
  \end{minipage}
  \begin{minipage}[t]{\wdff\linewidth}
    \centering
    \includegraphics[width=0.9\textwidth]{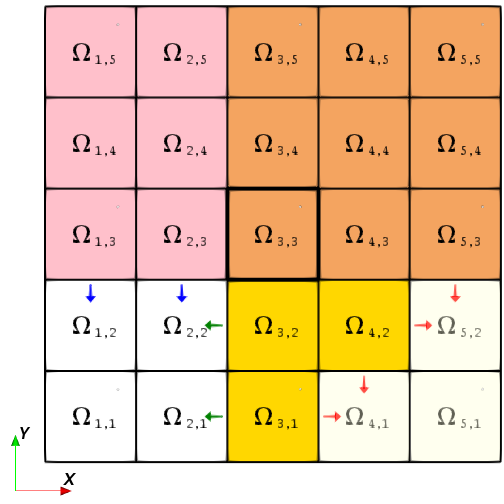}\\
    (c) Third sweep: step 7 
  \end{minipage}
  
  \vspace{0.25cm}
  \begin{minipage}[t]{\wdff\linewidth}
    \centering
    \includegraphics[width=0.9\textwidth]{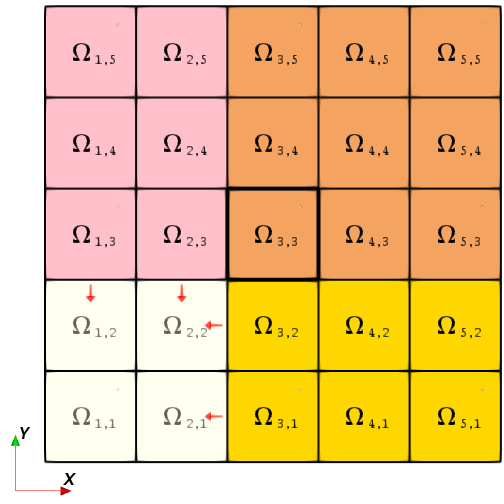}\\
    (d) After third sweep 
  \end{minipage}
  \begin{minipage}[t]{\wdff\linewidth}
    \centering
    \includegraphics[width=0.9\textwidth]{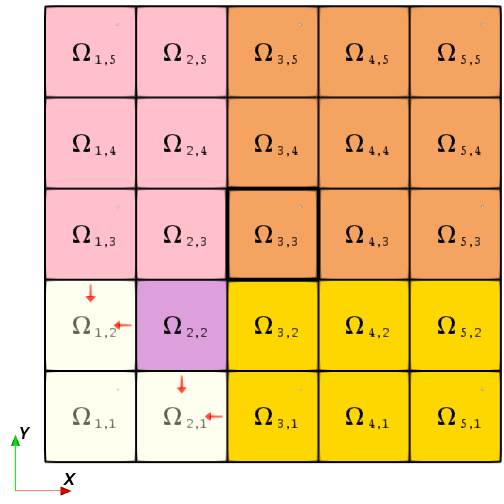}\\
    (e) Fourth sweep:  step 7 
  \end{minipage}
  \begin{minipage}[t]{\wdff\linewidth}    
    \centering
    \includegraphics[width=0.9\textwidth]{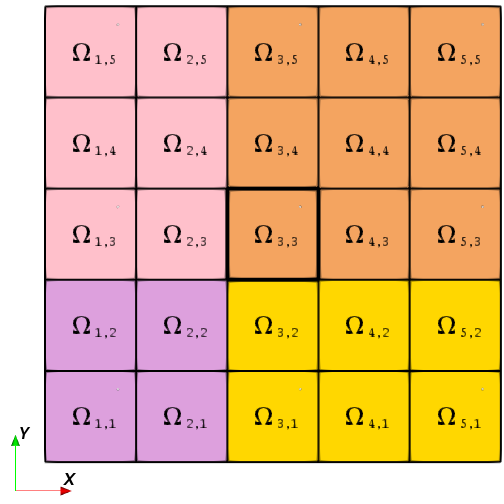}\\
    (f) After fourth sweep 
  \end{minipage}
  \vspace{-0.0cm}
  \caption{ The third sweep $(+1,-1)$ and the fourth sweep $(-1,-1)$ of the  trace transfer-based diagonal sweeping DDM in $\R^2$, where the source  lies in $\Omega_{3,3}$. The arrows denote the transferred traces with their directions, the red ones are in the similar direction to the current sweep and not against  Rule \ref{rule2d_b} (opposite directions), the blue ones are in the similar direction but against Rule \ref{rule2d_b}, and the green ones are not in the similar direction, thus only the red ones are used in the current sweep.
  \label{fig:sweep2d_34} }       
\end{figure*}

In the {\bf third sweep} of direction $(+1,-1)$, the solution in the lower-right region $\Omega_{i_0,N_1;1,j_0-1}={\Omega_{3,5; 1,2 }}$ is to be constructed.
as shown in Figure \ref{fig:sweep2d_34}-(a),
among all the unused transferred traces from the previous two sweeps, the ones from the upper-right region are needed since they satisfy both Rules \ref{rule2d_a} and \ref{rule2d_b}, while the ones from the upper-left region are excluded by Rule \ref{rule2d_b}, since they are from the second sweep which is in the opposite direction of the current sweep.
The steps of this sweep are similar to the steps of previous sweeps, and steps 6 and 7 are shown in Figure \ref{fig:sweep2d_34}-(b) and (c). After $\Nbx+\Nby-1=9$ steps the exact solution in the lower-right region is obtained.

In the {\bf fourth sweep} of direction $(-1,-1)$, the solution in the lower-left region $\Omega_{1,i_0-1;1,j_0-1}={\Omega_{1,2; 1,2 }}$ is to be constructed.
All the unused transferred traces from the previous sweeps are needed, as shown in Figure \ref{fig:sweep2d_34}-(d).  The reason is that they are all 
from the second and third sweeps (thus Rule \ref{rule2d_b} does not apply to them) and satisfy  Rule \ref{rule2d_a}.
The steps of the sweep are again similar to the steps of the previous sweeps, and  step 7 of the sweep are shown in Figure \ref{fig:sweep2d_34}-(e). After $\Nbx+\Nby-1=9$ steps, the exact solution in the whole region $\Omega$  is finally obtained, as shown in Figure \ref{fig:sweep2d_34}-(f).
\end{proof}

For the case of  general source $f$, clearly 
$u(f) = \sum_{i,j} u(f_{i,j})$, where $u(g)$ denotes the solution to the problem $\P_{\Omega,\kappa}$ with the source $g$, 
thus  we have the following theorem based on Lemma \ref{lemma:single_src}:
\begin{thm} \label{thm:sweep2d}
For a bounded smooth source $f$,  the DDM solution $u_{\text{DDM}}$ of Algorithm  \ref{alg:diag2D_t} is  the exact solution to the PML problem $\P_{\Omega,\kappa}$ with the source $f$ in the constant medium case. 
\end{thm}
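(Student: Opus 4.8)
The plan is to reduce the general source case to the single-subdomain case already handled in Lemma~\ref{lemma:single_src}, by exploiting linearity twice: linearity of the PML problem $\P_{\Omega,\kappa}$ and linearity of Algorithm~\ref{alg:diag2D_t} as a map from its input data to its output. First I would decompose $f=\sum_{i,j}f_{i,j}$ with $f_{i,j}=f\cdot\chi_{\Omega_{i,j}}$, so that by superposition for the operator $\L_{\Omega,\kappa}$ the exact PML solution satisfies $u(f)=\sum_{i,j}u(f_{i,j})$, where $u(g)$ is the solution of $\P_{\Omega,\kappa}$ with source $g$.

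Next I would argue that the output of Algorithm~\ref{alg:diag2D_t} depends linearly on the collection of local sources $\{f_{i,j}\}$. The key point is that the \emph{wiring} of the algorithm --- which subdomains are solved at which step of which sweep, which interface traces are extracted, and to which later sweep each extracted trace is routed by the rule ``find the smallest $l'\geq l$ allowed by Rules~\ref{rule2d_a}--\ref{rule2d_b}'' --- is determined solely by the domain decomposition and the sweep order, and is completely independent of the source. Along this fixed wiring every elementary operation is linear: the subdomain solves $\L_{\Omega_{i,j},\kappa_{i,j}}u_{i,j}^{l}=f_{i,j}$, the potential evaluations $\PotentialVec_{\Box,\vartriangle;\,i,j}$ appearing in \eqref{eq:subd_solve}, the trace restrictions $\big(u_{i,j}^{l},\,A_{\Omega_{i,j}}\nabla u_{i,j}^{l}\cdot\n_{\Box,\vartriangle}\big)^{T}\big|_{\gamma_{\Box,\vartriangle;i,j}}$, the accumulation into the local traces $\mathbf{g}^{l'}$, and the final assembly $u_{\text{DDM}}=\sum_{l,i,j}u_{i,j}^{l}\mu_{i,j}$. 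The conditional ``if $f_{i,j}\neq 0$'' together with ``set $f_{i,j}=0$'' is only a bookkeeping optimization (when $f_{i,j}=0$ one has $u_{i,j}^{l}=0$ anyway), so it does not disturb linearity. Hence $\{f_{i,j}\}\mapsto u_{\text{DDM}}$ is linear, and therefore $u_{\text{DDM}}(f)=\sum_{i,j}u_{\text{DDM}}(f_{i,j})$, where $u_{\text{DDM}}(g)$ denotes the algorithm's output when the only nonzero local source is $g$.

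Then I would apply Lemma~\ref{lemma:single_src} to each $f_{i,j}$: since $\supp(f_{i,j})\subset\Omega_{i,j}$ and $f_{i,j}$ is bounded (as $f$ is bounded), it gives $u_{\text{DDM}}(f_{i,j})=u(f_{i,j})$. Combining this with the two displayed superposition identities yields
\[
u_{\text{DDM}}(f)=\sum_{i,j}u_{\text{DDM}}(f_{i,j})=\sum_{i,j}u(f_{i,j})=u(f),
\]
which is the assertion.

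The one delicate point --- and the step I expect to be the main obstacle to making rigorous --- is regularity: the pieces $f_{i,j}=f\cdot\chi_{\Omega_{i,j}}$ are in general not smooth across $\partial\Omega_{i,j}$, whereas Lemma~\ref{lemma:single_src} inherits the ``bounded and smooth'' hypothesis from Lemma~\ref{lemma:change_domain}. This can be dealt with in either of two ways. One is to observe that in the proof chain of Lemma~\ref{lemma:change_domain} smoothness of the source is never actually invoked: the estimates \eqref{eq:u_eps_1}--\eqref{eq:u_eps_5} and the uniform convergences \eqref{eq:u_eps}--\eqref{eq:gradu_eps} rely only on $f$ being bounded, through the Green's-function bounds of Lemma~\ref{lemma:singularity}, so the hypothesis may be relaxed to mere boundedness. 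The other is to approximate each $f_{i,j}$ in $L^\infty$ by smooth compactly supported $f_{i,j}^{(m)}$ with supports contained in a fixed neighborhood of $\overline{\Omega_{i,j}}$, apply the already-established statement to $f_{i,j}^{(m)}$, and pass to the limit, using that both $g\mapsto u_{\text{DDM}}(g)$ and $g\mapsto u(g)$ are continuous on bounded sources (again via the kernel bounds of Lemma~\ref{lemma:singularity} and the exponential decay of the PML solutions). Either route closes the argument.
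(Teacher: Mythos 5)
Your proposal follows essentially the same route as the paper: the paper's entire argument for this theorem is the one-line observation that $u(f)=\sum_{i,j}u(f_{i,j})$ by linearity together with Lemma~\ref{lemma:single_src} applied to each $f_{i,j}$, which is exactly your superposition argument (the paper leaves the linearity of the algorithm itself implicit). Your closing remark about the non-smoothness of $f_{i,j}=f\cdot\chi_{\Omega_{i,j}}$ across $\partial\Omega_{i,j}$ is a genuine subtlety that the paper silently glosses over, and either of your two repairs (relaxing the hypothesis of Lemma~\ref{lemma:change_domain} to bounded sources, or approximating and passing to the limit) is a legitimate way to close it.
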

%

\subsection{The two-layered media case}

The layered media problems  play an important role in many applications involving the Helmholtz equation, for example, the seismic wave traveling in the layered structure is extensively studied for imaging the earth's interior. 
 Although the result of  Theorem \ref{thm:sweep2d}  in general does not hold for DDMs in the inhomogeneous media case,  
 the diagonal sweeping DDM handle the layered media more properly than  the L-sweeps method  \cite{Zepeda2020} and such advantage has been numerically  demonstrated   in \cite{Leng2020}. Part of the reason is that
the forward sweep for the incident and the backward sweep for the reflection could be accomplished in one iteration by the diagonal sweeping in the layered media case, while the L-shaped sweeping only deals the forward sweep for the incident within one iteration. 
Here we carry out rigorous analysis  on the trace transfer-based diagonal sweeping DDM in the two-layered media case and 
 show that some nice results still can be achieved  for Algorithm  \ref{alg:diag2D_t} under relaxed conditions.
Let us assume that  the media has an interface $\gamma_L$, which could be either horizontal or vertical. In the case that the media   interface is horizontal, it could be defined as $\gamma_L=\{(x_1,x_2)\;|\; x_2 = \eta_{L}\}$, and the wave number $\kappa$ satisfies 
\begin{align} \label{eq:2L-medium}
\kappa(x_1,x_2) &= \left\{
\begin{array}{ll}
\kup,  & \, x_2 > \eta_L,\\
\kdown,  & \, x_2 \leq \eta_L,
\end{array}
\right. 
\end{align}
for two  constants $\kup\ne\kdown$, as shown in Figure \ref{fig:2layer}-(left) where  $\eta_{L}> 0$ is assumed for illustration. 
The well-posedness and convergence of  the resulting global PML problem in two-layered media case have been proved in \cite{Chen2010}. In the following, the  one-dimensional strip domain partition is first studied  to establish some basic properties of the trace transfer in the 
two-layered media case, then the  general checkerboard partition is considered based on them.
%


\def\wdff{0.3}
\begin{figure*}[!ht]    
        \centering
        \begin{minipage}[t]{\wdff\linewidth}
                \centering
                \includegraphics[width=0.9\textwidth]{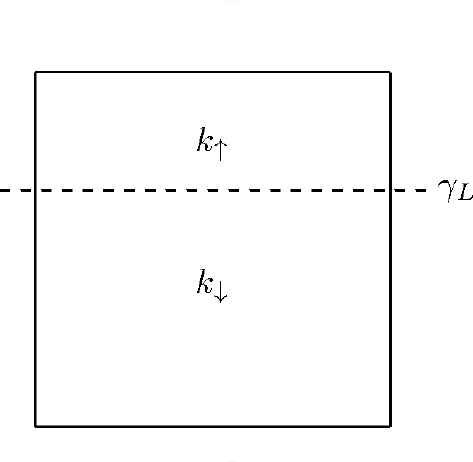}\\
        \end{minipage}
        \begin{minipage}[t]{\wdff\linewidth}
                \centering
                \includegraphics[width=0.9\textwidth]{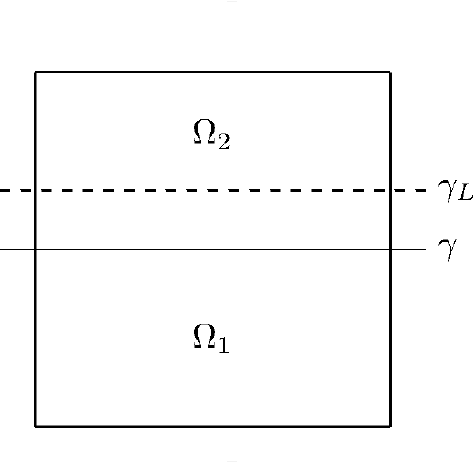}\\
        \end{minipage}
        \begin{minipage}[t]{\wdff\linewidth}
                \centering
                \includegraphics[width=0.9\textwidth]{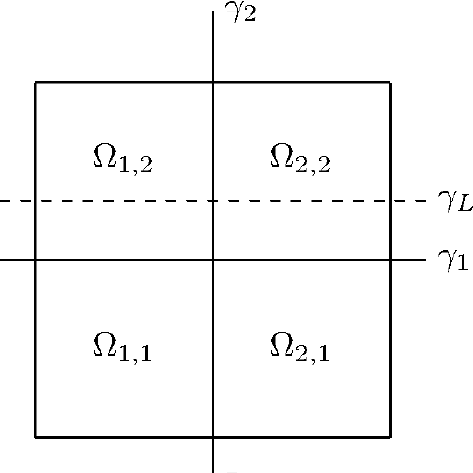}\\
        \end{minipage}
        \vspace{-0.2cm}
        \caption{ Two-layered media  (left) and  its $1\times 2$ (middle) or $2\times 2$ (right) domain partitions.  
                 \label{fig:2layer}}
\end{figure*}

\subsubsection{With  one-dimensional strip partitions}

The trace transfer-based one-dimensional sweeping DDM  for the $1\times N_2$ domain partition consists of one forward sweep and one backward sweep, which could be viewed as a degeneration of the diagonal sweeping DDM in $\R^2$ (Algorithm \ref{alg:diag2D_t}).
To illustrate this DDM, the simplest case $1\times 2$ is  first
described below.
Suppose the domain $\Omega = [-l_1, l_1] \times [-l_2, l_2]$ is partitioned into two subdomains $\Omega_1 = [-l_1, l_1] \times [-l_2, 0]$ and $\Omega_2 = [-l_1, l_1] \times [0, l_2]$, by the $x$-axis, which is denoted by $\gamma$, as shown in Figure \ref{fig:2layer}-(middle). 
The upper half-plane is defined as $\Omega^+ = \{(x_1,x_2) \;|\; x_2 > 0\}$, and the lower half-plane $\Omega^- = \{(x_1,x_2) \;|\; x_2 < 0\}$.
The two subdomain PML  problems are $\P_{\Omega_1, \kappa_1}$ and $\P_{\Omega_2, \kappa_2}$ with the wave numbers 
$\kappa_1$ and $\kappa_2$ determined according  to \eqref{eq:wave_number}, 
and we denote $G_1$ and $G_2 $ as the fundamental solution to  $\P_{\Omega_1, \kappa_1}$ and $\P_{\Omega_2, \kappa_2}$, respectively.
The potentials associated with the subdomains $\Omega_1$ and $\Omega_2$ are then given by
\begin{align*} 
\Potential_{i} (\lambda) (\bx) := 
\int_{\gamma_{}}J_{\Omega_{i}}^{-1}G_{i}(\bx,\by)
\big(A_{\Omega_{i}}\nabla_{\by}\lambda(\by)\cdot \n_{i}\big)
-\lambda(\by)\Big(A_{\Omega_{i}}\nabla_{\by}\big(J_{\Omega_{i}}^{-1}G_{i}(\bx,\by)\big)\cdot \n_{i}\Big)d\by,
\end{align*}
for $i = 1,2$, where  $\n_1 = (0,-1)$ and  $\n_2 =(0,+1)$.

In the \textbf{first sweep} of upward direction, at step 1 the subdomain problem $\P_{\Omega_1,\kappa_1}$ is solved with the source $f \cdot \chi_{\Omega_1}$, and denote the solution as $u_1=\L_{\Omega_1, \kappa_1}^{-1} (f \cdot \chi_{\Omega_1}) $. At step 2, the subdomain problem $\P_{\Omega_2,\kappa_2}$ is solved with the transferred trace from $\Omega_1$ in addition to the source $f \cdot \chi_{\Omega_2}$, and denote the solution as $u_2$, which is given by 
\begin{align} \label{eq:1x2_u2}
u_2 = \L_{\Omega_2, \kappa_2}^{-1} (f \cdot \chi_{\Omega_2}) + \Potential_{2} (u_1).
\end{align}
In the \textbf{second sweep} of downward direction, at step 1,
since the source $f$ is no longer used and there is no transferred trace passed to $\Omega_2$ from upside,
the subdomain solution of $\P_{\Omega_2,\kappa_2}$ is zero. Then at step 2   the subdomain problem of $\P_{\Omega_1,\kappa_1}$ is solved with the transferred trace from $\Omega_2$, and denote the solution as $u'_1$, which is given by 
\begin{align} \label{eq:1x2_up1}
u'_1 = \Potential_{1} (u_2).
\end{align}
After these two sweeps, the one-dimensional sweeping DDM terminates and generates a DDM solution for  $\P_{\Omega,\kappa}$ with the source $f$, which is given by
\begin{align} \label{eq:1x2_sol}
  u_{\text{DDM}} = (u_1 + u'_1) \mu_{\Omega^-} + u_2 \mu_{\Omega^+}.
\end{align}

\def\wdff{0.21}
\begin{figure*}[!ht]    
        \centering
        \begin{minipage}[t]{\wdff\linewidth}
                \centering
                \includegraphics[width=0.9\textwidth]{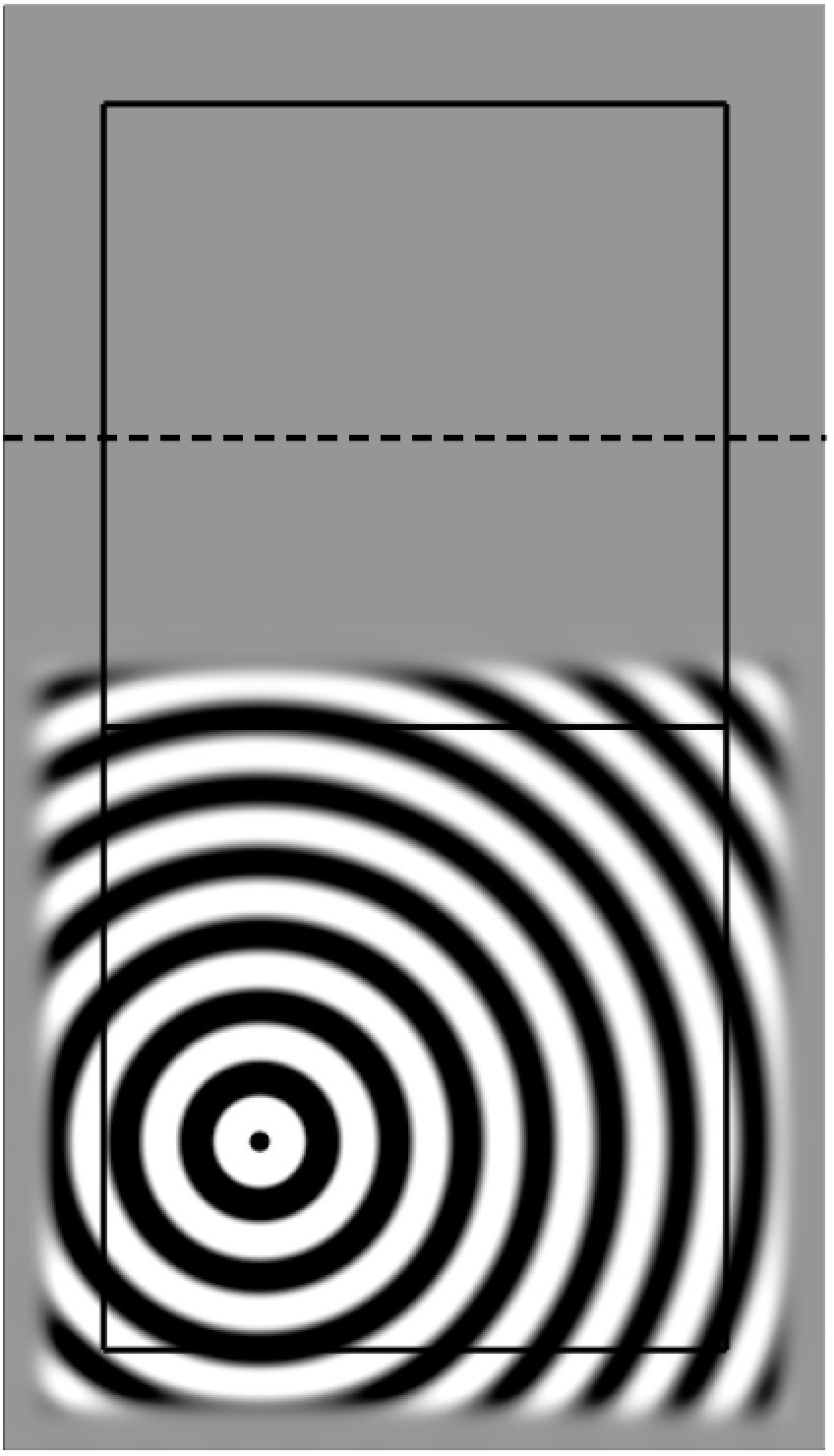}\\
                (a) $u_1$
        \end{minipage}
        \begin{minipage}[t]{\wdff\linewidth}
                \centering
                \includegraphics[width=0.9\textwidth]{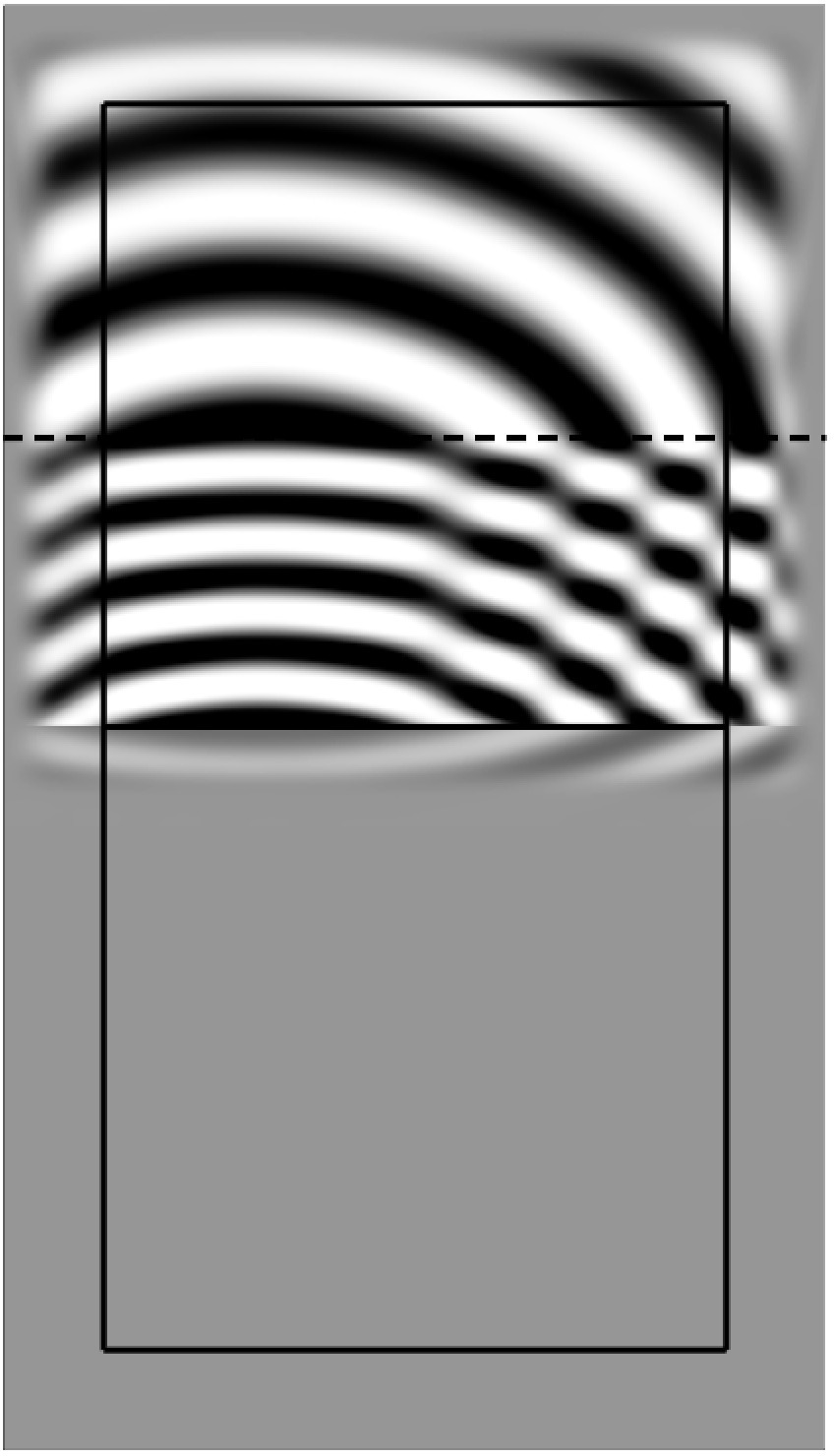}\\
                (b) $u_2$
        \end{minipage}
        \begin{minipage}[t]{\wdff\linewidth}
                \centering
                \includegraphics[width=0.9\textwidth]{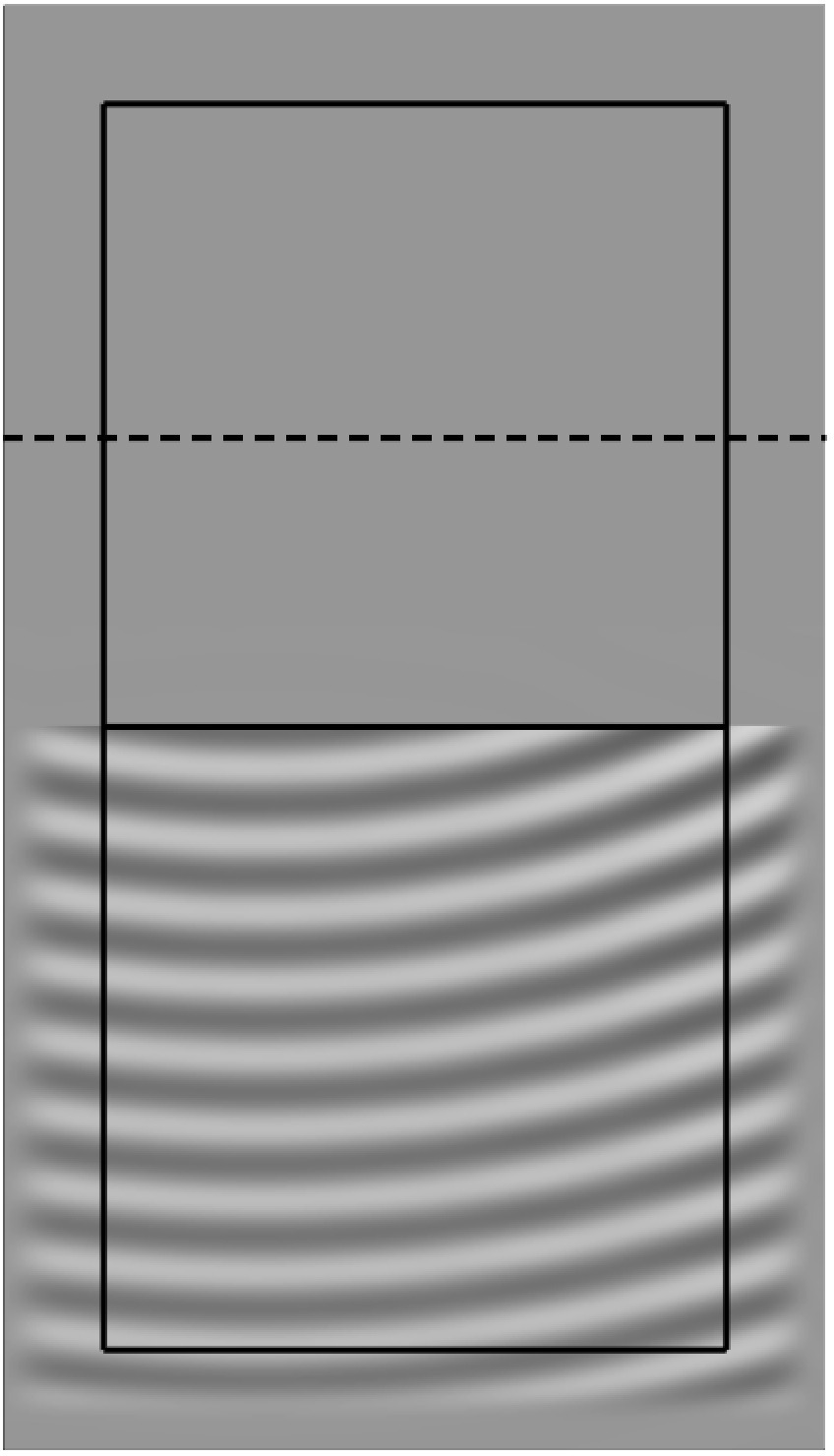}\\
                (c) $u'_1$
        \end{minipage}
        \begin{minipage}[t]{\wdff\linewidth}    
                \centering
                \includegraphics[width=0.9\textwidth]{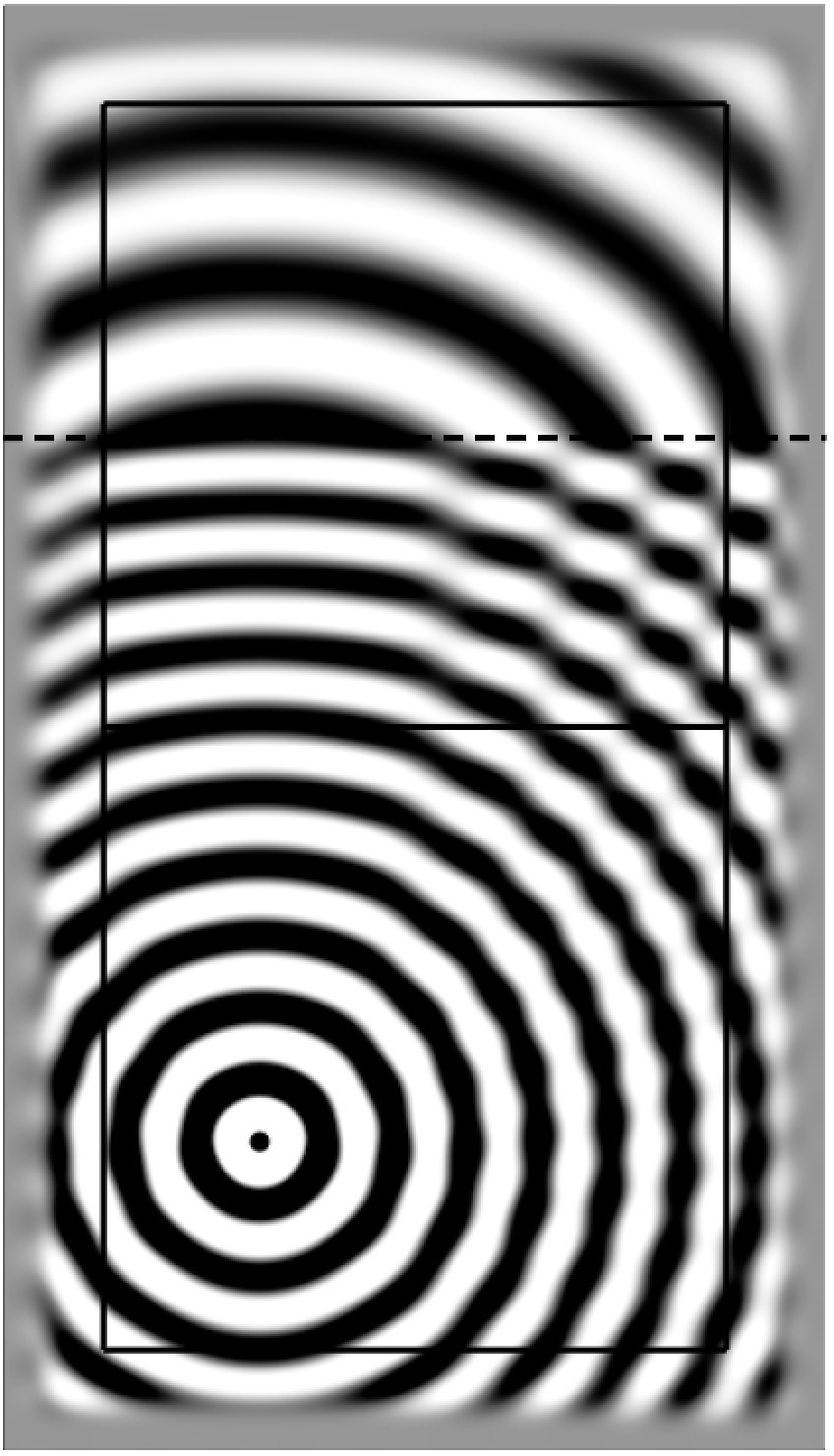}\\
                (d) $u$
        \end{minipage}   
        
        \caption{{Illustration of the  sweeping process of Algorithm \ref{alg:diag2D_t} in the two-layered media case with the $1 \times 2 $ domain partition, $\eta_L > 0$ and $\supp(f) \subset \Omega_{1}$.   
                } \label{fig:sf12R}}
\end{figure*}

On the other hand, the sweeping procedure could still continue so that it results in an iterative solver. For example, 
one more upward sweep could follow the second (downward) sweep, 
and we then have the subdomain solution of $\Omega_{2}$, denoted as $u'_2$ and given by
	$u'_2 = \Potential_{2} (u'_1).$
Consequently the DDM solution with such three sweeps, denoted $u^3_{\text{DDM}}$, is formed as
\begin{align} \label{eq:1x2_sol2}
	u^3_{\text{DDM}} = (u_1 + u'_1) \mu_{\Omega^-} + (u_2 + u'_2) \mu_{\Omega^+}.
\end{align}

By Lemma \ref{lemma:change_domain}, it is obvious that the one-dimensional sweeping DDM could produce the exact solution in two sweeps for the two-layered media case with vertical media interface.  In the case of horizontal media interface and $\eta_L > 0$, the exact solution could also be obtained in two sweeps and we have the following lemma.
\begin{lemmaa} \label{lemma:1x2_up}
        Assume that $f$ is a bounded smooth source, the media interface is horizontal and $\eta_L > 0$. Then the one-dimensional sweeping DDM solution $u_{\text{DDM}}$ with the $1\times 2$ domain partition (i.e., the DDM solution \eqref{eq:1x2_sol})  is the exact solution to $\P_{\Omega,\kappa}$ with the source $f$ in the two-layered media case.        
\end{lemmaa}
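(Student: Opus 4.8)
The plan is to establish the two localized identities $u_2 = u$ on $\overline{\Omega^+}$ and $u_1 + u_1' = u$ on $\overline{\Omega^-}$, where $u$ denotes the solution of $\P_{\Omega,\kappa}$ with source $f$; the conclusion then follows from \eqref{eq:1x2_sol}, since $\mu_{\Omega^-} + \mu_{\Omega^+} \equiv 1$ (with the values $\frac12 + \frac12$ on $\gamma$). I would first collect the structural consequences of the hypothesis $\eta_L > 0$: because $\overline{\Omega_1} \subset \{x_2 \le \eta_L\}$, the wave number assigned by \eqref{eq:wave_number} is $\kappa_1 \equiv \kdown$, in particular constant and equal to $\kappa$ on $\Omega_1$, while $\kappa_2 \equiv \kappa$; because $\Omega_1$ (resp. $\Omega_2$) shares the bottom (resp. top) and both lateral sides of the box $\Omega$, the uniaxial PML coefficients agree there, so $\L_{\Omega_1,\kappa_1} = \L_{\Omega,\kappa}$ on $\{x_2 < 0\}$ and $\L_{\Omega_2,\kappa_2} = \L_{\Omega,\kappa}$ on $\{x_2 > 0\}$; and because $\sigma_2$ vanishes near $\gamma$, the coefficients $A_\Omega, J_\Omega$ are smooth across $\gamma$, so all the solutions involved carry continuous Cauchy data $(v, A\nabla v \cdot \n)$ across $\gamma$. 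By linearity of both \eqref{eq:PML} and Algorithm \ref{alg:diag2D_t}, together with a regularization/limiting argument exactly as in the proof of Lemma \ref{lemma:change_domain}, it suffices to treat $\supp(f) \subset \Omega_1$ (with $\supp f$ strictly below $\gamma$) and $\supp(f) \subset \Omega_2$ separately, and to write $u = u_{(1)} + u_{(2)}$ with $u_{(k)} := \L_{\Omega,\kappa}^{-1}(f\chi_{\Omega_k})$.

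Next I would prove $u_2 = u$ on $\overline{\Omega^+}$ from the representation $u_2 = \L_{\Omega_2,\kappa_2}^{-1}(f\chi_{\Omega_2}) + \Potential_2(u_1)$ in \eqref{eq:1x2_u2}, treating the two summands independently. For the volume term, $\L_{\Omega_2,\kappa_2}^{-1}(f\chi_{\Omega_2})$ and $u_{(2)}$ satisfy the same equation on $\{x_2 > 0\}$ (the operators agree there), both decay upward, and both carry on $\gamma$ the Cauchy data of the outgoing field of the lower $\kdown$ region; the fact that one realizes that outgoing condition through the PML of $\Omega_2$ placed immediately below $\gamma$ and the other through the physical $\kdown$ slab capped by the PML of $\Omega$ is immaterial, since both induce the same Dirichlet-to-Neumann map on $\gamma$ (a PML-consistency statement provable from the well-posedness and representation theory of \cite{Chen2010}, or directly via the complex-stretched-coordinate normal form of the $x_2$-operator), so the two agree on $\overline{\Omega^+}$. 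For the potential term, Lemma \ref{lemma:trace} with box $\Omega$ and curve $\gamma$ (source on the $\{x_2 < 0\}$ side), followed by Lemma \ref{lemma:change_domain} replacing $G_{\Omega,\kappa}$ by $G_2$ (legitimate because $\kappa_2 = \kappa$), gives $\Potential_2(u_{(1)}) = u_{(1)}$ on $\overline{\Omega^+}$; and $\Potential_2(u_1 - u_{(1)}) = 0$ on $\overline{\Omega^+}$, because $d := u_1 - u_{(1)}$ solves $\L_{\Omega,\kappa} d = 0$ on $\{x_2 < 0\}$ and decays there, so $d|_\gamma$ is the Cauchy data of an outgoing-downward field, and pairing the $\L_{\Omega_2,\kappa_2}$-outgoing extension of $d|_\gamma$ against $G_2(\cdot,\bx)$ for $\bx \in \Omega^+$ in a Green identity over $\{x_2 < 0\}$ (justified by the singularity and decay bounds of Lemma \ref{lemma:singularity}) sends it to zero above $\gamma$. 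Adding the contributions gives $u_2 = u_{(2)} + u_{(1)} = u$ on $\overline{\Omega^+}$.

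Then I would conclude $u_1 + u_1' = u$ on $\overline{\Omega^-}$. Because $u_2 = u$ on $\overline{\Omega^+}$ and $u$ has continuous Cauchy data across $\gamma$, the trace transferred into $\Omega_1$ in the second sweep is precisely the Cauchy data of $u$ on $\gamma$, so by \eqref{eq:1x2_up1} the operator $\Potential_1$ acts on the datum of $u$. Splitting $u = u_{(1)} + u_{(2)}$ on $\gamma$: the $u_{(2)}$-part falls under Lemma \ref{lemma:trace} plus Lemma \ref{lemma:change_domain} with the source on the far side $\{x_2 > 0\}$, giving $u_{(2)}$ on $\overline{\Omega^-}$; for the $u_{(1)}$-part, the Green representation of $u_{(1)}$ over the half-plane $\{x_2 < 0\}$ (valid by Lemma \ref{lemma:singularity} and the exponential decay from \cite{Chen2010}, using $\L_{\Omega_1,\kappa_1} = \L_{\Omega,\kappa}$ there) reads $u_{(1)} = \L_{\Omega_1,\kappa_1}^{-1}(f\chi_{\Omega_1}) + \Potential_1(u_{(1)}) = u_1 + \Potential_1(u_{(1)})$ on $\{x_2 < 0\}$, hence $\Potential_1(u_{(1)}) = u_{(1)} - u_1$ there. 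Therefore $u_1' = (u_{(1)} - u_1) + u_{(2)}$ on $\Omega^-$, so $u_1 + u_1' = u_{(1)} + u_{(2)} = u$ on $\Omega^-$, and on $\gamma$ by continuity; combined with the previous step and \eqref{eq:1x2_sol} this yields $u_{\text{DDM}} = u$.

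The main obstacle is the second paragraph --- showing $u_2 = u$ on $\overline{\Omega^+}$ --- and both difficulties there trace to the single fact that $\L_{\Omega_1,\kappa_1}$, $\L_{\Omega_2,\kappa_2}$ and $\L_{\Omega,\kappa}$ differ below $\gamma$ only in where the PML sits. First, unlike the constant-medium case, $u_1 \ne u|_{\Omega^-}$: it misses the wave reflected by the physical interface $\gamma_L \subset \Omega_2$, so one must verify that this discrepancy is purely outgoing-downward and hence has vanishing upward trace potential --- the place where the $\eps$-extension device of Lemma \ref{lemma:change_domain} (or the matched Green identity above) is actually used. Second, one must know that truncating the $\kdown$ half-space below $\gamma$ by a PML leaves the solution above $\gamma$ unchanged, a PML-consistency statement I would base on the results of \cite{Chen2010}. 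Everything else --- the ordering of the two sweeps, the vanishing of the $\Omega_2$-solve in the second sweep, and the reassembly through the cutoffs $\mu_{\Omega^\pm}$ --- is routine once Lemmas \ref{lemma:trace} and \ref{lemma:change_domain} are in hand.
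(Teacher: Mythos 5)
Your proposal is correct in its two target identities ($u_2=u$ on $\overline{\Omega^+}$, $u_1+u_1'=u$ on $\overline{\Omega^-}$) and in its overall architecture, but the route to each identity is genuinely different from the paper's. For the upper half-plane, you split $u_1=u_{(1)}+d$ and kill the discrepancy term via ``$\Potential_2(d)=0$ above $\gamma$ because $d$ is outgoing-downward''; this is true, but it is exactly the hard point, and it is not a consequence of Lemmas \ref{lemma:trace}--\ref{lemma:change_domain} as stated (the Green identity over $\{x_2<0\}$ pairs a field solving the $\L_{\Omega,\kappa}$-equation against a kernel solving the $\L_{\Omega_2,\kappa_2}$-equation, and these operators differ below $\gamma$, so one must re-run the $\eps$-extension argument or first pass to $G^{\kappa}$ and then change kernels). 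The paper sidesteps this entirely: it takes $f=\delta_{\bx'}$, uses the reciprocity relation \eqref{eq:fund_sol_sym} to rewrite $\Psi^{\kappa}_{+}(G^{\kdown}_{\bx'})(\bx'')$ as a constant multiple of $\Psi^{\kdown}_{-}(G^{\kappa}_{\bx''})(\bx')$ --- i.e., it swaps which field supplies the trace and which supplies the kernel --- after which Lemmas \ref{lemma:trace} and \ref{lemma:change_domain} apply verbatim, and Fubini handles general $f$. Your approach buys a more transparent physical picture (the missing reflection is purely down-going, hence invisible from above); the paper's buys a proof that never has to analyze $d$ at all. For the lower half-plane the divergence is similar: you use the Green representation $u_{(1)}=u_1+\Potential_1(u_{(1)})$ on $\{x_2<0\}$, while the paper builds the cutoff field $v=(u-u_1)\beta_*$, identifies it as the solution of a problem whose source sits in the strip $(0,d_*)$ above $\gamma$, and then applies the trace lemmas to $v$. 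Both work.

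One step of yours is wrong as literally stated, though the conclusion survives: ``the trace transferred into $\Omega_1$ in the second sweep is precisely the Cauchy data of $u$ on $\gamma$.'' The algorithm transfers the trace of $u_2$ taken as the limit from the receiving ($\Omega^-$) side, and $u_2=\Potential_2(u_1)$ is a layer potential that jumps across $\gamma$: the paper computes $\lim_{x_2\to 0^-}u_2=u-u_1$ in \eqref{eq:1x2_u2_interface}, not $u$. Your final answer is unaffected only because $\Potential_1(u_1)=0$ in $\Omega^-$ (Lemma \ref{lemma:trace} applied to $u_1$ with the source below $\gamma$), so $\Potential_1(u)=\Potential_1(u-u_1)$ there; but this cancellation should be stated rather than obtained by silently using the trace from the wrong side, since the entire second half of the paper's proof is organized around computing that one-sided limit.
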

\begin{proof}

  Since $\eta_L > 0$, we have $\kappa_1 = \kdown$ and $\kappa_2 = \kappa$ according to \eqref{eq:wave_number}.
  The result obviously holds in the situation of $\supp(f) \subset \Omega_{2}$,
  thus we only need to consider the situation of $\supp(f) \subset \Omega_{1}$.
  Figure \ref{fig:sf12R}  gives the diagonal sweeping DDM solution process for this situation. The subdomain solution $u_1$ contains the up-going wave of $\Omega^-$, the subdomain solution $u_2$ is already exact in $\Omega^+$, the subdomain solution $u'_1$ contains the reflection in $\Omega^-$ from $\Omega^+$, and 
  $u_{\text{DDM}}$  gives the exact PML solution $u$, as shown in Figure \ref{fig:sf12R}-(a) to (d), respectively.   
  
We start the proof  with first introducing some notations. Define the following two PML problems associated with $\Omega$: $\P_{\Omega, \kappa}$ is the one with the two-layered media $\kappa$ and $\P_{\Omega, \kdown}$ is the one with the constant medium $\kdown$.
  The solution to the problem  $\P_{\Omega, \kappa}$ with source $f$ is denoted by $u$.
  Let $G^{\kappa}$ and $G^{\kdown}$ be the fundamental solutions to $\P_{\Omega, \kappa}$ and $\P_{\Omega, \kdown}$, respectively.  Define the potential associated with $\P_{\Omega, \kappa}$ as
  \begin{align}
    \Potential_{\pm}^{\kappa}(\lambda)(\bx):=\DD\int_{\gamma}J_{\Omega}^{-1}G^{\kappa}(\bx,\by)\big(A_{\Omega}\nabla_{\by} \lambda(\by)\cdot \n_{\pm}\big)-\lambda(\by)\Big(A_{\Omega}\nabla_{\by} \big(J_{\Omega}^{-1}G^{\kappa}(\bx,\by)\big)\cdot \n_{\pm}\Big)d\by,\label{eq:1x2_Potentialpm}
  \end{align}
  where $ \n_{-}=\n_1$ and $ \n_{+}=\n_2$.
  The potentials $\Potential_{\pm}^{\kdown}$  associated with $\P_{\Omega, \kdown}$ are defined similarly.
  %

  First we prove that the diagonal  sweeping DDM solution $u_{\text{DDM}}$ is exact in the upper half-plane:
    \begin{align} \label{eq:1x2_Omg2}
    u_2 = u,  \qquad &\text{in} \,\, \Omega^+. 
  \end{align}
  To show this, we start by considering the case that $f$ is a delta function $\delta(\bx - \bx')$ with $\bx' \in \Omega_{1}$. In this case, the subdomain solution of $\Omega_1$ at step 1 is $G^{\kdown} (\bx, \bx') $ in $\Omega^-$, and define $G^{\kdown}_{\bx'}(\bx) = G^{\kdown} (\bx, \bx')$ for any $\bx\in\Omega^{-}$.  Then at step 2, the subdomain solution of $\Omega_2$ is $\Potential_2(G_{\bx'}^{\kdown})$ according to \eqref{eq:1x2_u2}.
  In the upper plane $\Omega^+$, by the property of uniaxial PML, we have that for any $\bx'' \in \Omega^+$ and $\by \in \gamma$, $G_2(\bx'', \by) = G^\kappa(\bx'', \by)$, thus
   $\Potential_2(G_{\bx'}^{\kdown}) = \Potential^{\kappa}_{+} (G_{\bx'}^{\kdown})$.
 Then by \eqref{eq:1x2_Potentialpm} and \eqref{eq:fund_sol_sym}, we have that for 
   \begin{align}
     \Psi_{+}^{\kappa}(G_{\bx'}^{\kdown})(\bx'')
    = & \DD\int_{\gamma}J_{\Omega}^{-1}(\by)G^{\kappa}(\bx'',\by)\big(A_{\Omega}\nabla_{\by} G^{\kdown}(\by,\bx')\cdot \n_{+}\big) 
    \nonumber\\
    &\qquad -G^{\kdown}(\by,\bx')\Big(A_{\Omega}\nabla_{\by}\big(J_{\Omega}^{-1}(\by)G^{\kappa}(\bx'',\by)\big)\cdot \n_{+}\Big)d\by\nonumber \\
    = &\; \DD\frac{J_{\Omega}(\bx')}{J_{\Omega}(\bx'')}\int_{\gamma}J_{\Omega}^{-1}(\by)G^{\kdown}(\bx',\by)\big(A_{\Omega}\nabla_{\by} G^{\kappa}(\by,\bx'')\cdot \n_{-}\big)\nonumber\\
    & \qquad\qquad\quad-G^{\kappa}(\by,\bx'')\Big(A_{\Omega}\nabla_{\by}\big(J_{\Omega}^{-1}(\by)G^{\kdown}(\bx',\by)\big)\cdot \n_{-}\Big)d\by\label{eq:1x2_adj} \nonumber\\
    = &\; \frac{J_{\Omega}(\bx')}{J_{\Omega}(\bx'')}\Psi_{-}^{\kdown}(G_{\bx''}^{\kappa})(\bx'),
  \end{align}
  where $G_{\bx''}^{\kappa}$ is the fundamental solution to the problem $\P_{\Omega,\kappa}$ with the delta source $\delta(\bx - \bx'')$.
  %
  Moreover, using the same argument as done in Lemma \ref{lemma:change_domain}, we  obtain ${\Potential}^{\kdown}_{-} ({G}_{\bx''}^{\kappa}) (\bx') = {\Potential}^{\kappa}_{-} ({G}_{\bx''}^{\kappa}) (\bx')$,
  and by Lemma \ref{lemma:trace}, ${\Potential}^{\kappa}_{-} ({G}_{\bx''}^{\kappa}) (\bx') = {G}^\kappa_{\bx''} (\bx')$,
  thus \eqref{eq:1x2_adj} implies that for any $\bx'' \in \Omega^+$,
  \begin{equation}
    \Potential^{\kappa}_{+} (G_{\bx'}^{\kdown}) (\bx'') 
    = \frac{J_{\Omega}(\bx')}{J_{\Omega}(\bx'')}  G^\kappa_{\bx''} (\bx')
    = G^\kappa_{\bx'} (\bx''), \label{eq:1x2_delta}
  \end{equation}
  which implies that \eqref{eq:1x2_Omg2}  holds in the case of $f$ being a delta function $\delta(\bx-\bx')$ in $\Omega_1$.
  
  Next let us consider a general smooth bounded source $f$ located in $\Omega_1$. By Fubini's theorem, for any $\by \in \gamma$,
  \begin{align} 
    \nabla_{\by} {u_1} (\by) =&\; \nabla_\by \int_{\Omega_1} f(\bx') G^{\kdown} (\by, \bx') d\bx' = \int_{\Omega_1} f(\bx') \nabla_\by G^{\kdown} (\by, \bx') d\bx'. \label{eq:1x2_pu1}
  \end{align}
 Combining \eqref{eq:1x2_delta} and \eqref{eq:1x2_pu1}, we then  have that for any $\bx'' \in \Omega^+$,
  \begin{align}
    u_{2}(\bx'')= &\; \Potential_{+}^{\kappa}(u_{1})(\bx'')\nonumber \\
    = & \DD\int_{\gamma_{}}J_{\Omega}^{-1}(\by)G^{\kappa}(\bx'',\by)\big(A_{\Omega}(\by)\nabla_{\by}u_{1}(\by)\cdot \n_{2}\big)-u_{1}(\by)\Big(A_{\Omega}(\by)\nabla_{\by}\big(J_{\Omega}^{-1}(\by)G^\kappa(\bx'',\by)\big)\cdot \n_{2}\Big)d\by\nonumber \\
    = & \DD\int_{\gamma} \bigg[ J_{\Omega}^{-1}(\by)G^{\kappa}(\bx'',\by)\Big(A_{\Omega}(\by)\int_{\Omega_{1}}f(\bx') \nabla_\by G^{\kdown}(\by,\bx')  d\bx'\cdot \n_{2}\Big) \nonumber\\
                & \qquad -\Big(\int_{\Omega_{1}}f(\bx')G^{\kdown}(\by,\bx')d\bx'\Big) \Big(A_{\Omega}(\by)\nabla_{\by}\big(J_{\Omega}^{-1}(\by)G^{\kappa}(\bx'',\by)\big)\cdot \n_{2}\Big) \bigg] d\by. \label{eq:1x2_doubleint}
  \end{align}
  The fundamental solution $G^{\kdown}$, $G^{\kappa}$ and their derivatives are singular at $\bx = \bx'$, but still integrable on ${\Omega_1}$, furthermore, they decay exponentially outside of $\Omega_1$, consequently the order of integration in \eqref{eq:1x2_doubleint} can be changed according to Fubini's theorem, therefore we have that  for any $\bx'' \in \Omega^+$,
  \begin{align*}
    u_{2}(\bx'')= & \DD\int_{\Omega_{1}}f(\bx')  \bigg( \int_{\gamma_{}}J_{\Omega}^{-1}(\by)G^{\kappa}(\bx'',\by)\big(A_{\Omega}(\by)\nabla G^{\kdown}(\by,\bx')\cdot \n_{2}\big)\\
                &\qquad\qquad\qquad -G^{\kdown}(\by,\bx')\Big(A_{\Omega}(\by)\nabla_{\by}\big(J_{\Omega}^{-1}(\by)G^{\kappa}(\bx'',\by)\big)\cdot \n_{2}\Big)d\by \bigg) d\bx' \\
    = & \DD\int_{\Omega_{1}}f(\bx')G^{\kappa}(\bx'',\bx')d\bx'
    \;=\; u(\bx'').
  \end{align*}

  %
  %
  %
  %

  Next we prove that the diagonal  sweeping DDM solution $u_{\text{DDM}}$ is also exact in the lower half-plane $\Omega^-$:
  \begin{align} \label{eq:1x2_Omg1}
    u_1 + u'_1 = u,  \qquad \text{in} \,\, \Omega^-.
  \end{align}
%
 Let us investigate the  limit value of the subdomain solution $u_2$ on $\gamma$ from the $\Omega^{-}$ side,
  then study the value of $u'_1$ using \eqref{eq:1x2_up1}.
  Since the media interface $\gamma_{L}$ is away from the subdomain interface, we  denote the distance as $d_*>0$ and  introduce a smooth cutoff function $\beta_* (x_2) = \beta(\frac{x_2 - d_*}{d_*})$, which is 0 for $x_2\geq d_k$, 1 for $x_2\leq0$, and smooth in $x_2 \in (0, d_*)$.
  Define
  \begin{align} 
    v = (u - u_1) \beta_{*}, \qquad \text{in} \,\,  \R^2, \label{eq:1x2_v}
  \end{align} 
and it is clear that 
$ \L_{\Omega,\kappa} v = 0$ in $\R^2 / \Omega_{\beta}$, where
$\Omega_{\beta}= (-\infty, \infty) \times (0, d_*)$.
  This indicates that $v$ is the solution to the problem $\P_{\Omega,\kappa}$ with the source lying in $\Omega_{\beta}$.
  Let $\widetilde{v}$ be the solution to the problem $\P_{\Omega_2,\kappa}$ with the same source, that is,
 $
    \L_{\Omega_2,\kappa} \widetilde{v} = \L_{\Omega, \kappa} v$ in  $\R^2$, 
  which implies 
  \begin{align}
    \Psi_2(\widetilde{v}) = -\widetilde{v}, \quad \text{in} \,\, \Omega^-, \label{eq:1x2_tildev} 
  \end{align}
according to Lemma \ref{lemma:trace}.  Since the problem  $\P_{\Omega_2,\kappa}$ is a subdomain problem of   $\P_{\Omega,\kappa}$ and $v$ and $\widetilde{v}$ shares the same source, we know that $v = \widetilde{v}$ on $\gamma$, which further implies from \eqref{eq:1x2_v} and \eqref{eq:1x2_tildev} that
   $ \Psi_2(u - u_1) = -\widetilde{v}$ in $\Omega^-$.
   Moreover, by Lemma \ref{lemma:change_domain}, it holds $\Psi_2(u) = 0$ in $\Omega^-$.
   Consequently we obtain that
    $\Psi_2(u_1) = \widetilde{v}$, in $\Omega^-$.  
  Therefore it holds that
\begin{align}
    \lim\limits_{x_2 \rightarrow 0-}u_2 = u - u_1. \label{eq:1x2_u2_interface}
\end{align}

On the other hand, $v$ also satisfies that 
  $ \L_{\Omega,\kdown} v = 0$ in $\R^2 / \Omega_{\beta},$  thus by Lemma \ref{lemma:trace} we get 
  \begin{align}
    \Psi_1(v) = \Psi_{-}^{\kdown}(v) = v, \quad \text{in} \,\, \Omega^-.
  \end{align}
 Together with  \eqref{eq:1x2_v} and \eqref{eq:1x2_u2_interface}, we have
  \begin{align}
    \Psi_1(u_2) = u - u_1, \quad \text{in} \,\, \Omega^-, 
  \end{align} 
  and thus \eqref{eq:1x2_Omg1} is true. 
\end{proof}

In the above analysis for the $1 \times 2$ partition, we have proved that  the one-dimensional sweeping DDM  produces the exact solution in two sweeps (one upward and one downward) in the case of horizontal media interface and $\eta_L > 0$. However, in the case of $\eta_L < 0$, one more upward sweep is needed to construct the exact solution. {Note that in this case $\supp(f)$ could be  contained in the subdomains above the media interface $\gamma_L$.} 

\begin{lemmaa} \label{lemma:1x2_down}
	Assume that $f$ is a bounded smooth source,  the media interface is horizontal and $\eta_L < 0$. Then the DDM solution $u^3_{\text{DDM}}$  defined by \eqref{eq:1x2_sol2} with the $1\times 2$ domain partition is the exact solution to $\P_{\Omega,\kappa}$ with the source $f$ in the two-layered media case.        
\end{lemmaa}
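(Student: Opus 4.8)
The plan is to mirror the proof of Lemma~\ref{lemma:1x2_up}, paying attention to the new feature that the medium interface $\gamma_L$ now lies inside the bottom subdomain. Since $\eta_L<0$, formula \eqref{eq:wave_number} gives $\kappa_1=\kappa$ (two-layered, so $\P_{\Omega_1,\kappa_1}$ already ``sees'' $\gamma_L$) and $\kappa_2=\kup$ (constant); this is precisely why the third, upward sweep is needed --- it carries back up into $\Omega_2$ the reflection off $\gamma_L$ that can only be produced once a downward-going trace has entered $\Omega_1$. First I would reduce to a point source: by the linearity of the sweeping procedure in $f$ it suffices to treat $f\chi_{\Omega_1}$ and $f\chi_{\Omega_2}$ separately, and within each piece to superpose over the source location, interchanging the spatial integral with the potential operators $\Psi_1,\Psi_2$ via Fubini's theorem --- the interchange being justified by Lemma~\ref{lemma:singularity} exactly as in \eqref{eq:1x2_pu1}--\eqref{eq:1x2_doubleint}. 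So it remains to prove $u^3_{\text{DDM}}=u$ for $f=\delta(\bx-\bx')$ in the two cases $\bx'\in\Omega_1$ and $\bx'\in\Omega_2$.

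Case $\bx'\in\Omega_1$. Here $u_1=\L_{\Omega_1,\kappa}^{-1}\delta_{\bx'}$ equals $u$ in $\Omega^-$, because $\Omega_1$ already contains $\gamma_L$ and the continuous PML solution is insensitive to the placement of the absorbing layer (the same fact used in the proof of Lemma~\ref{lemma:1x2_up}); and $u_2=\Psi_2(u_1)$ since $f\chi_{\Omega_2}=0$. By Lemma~\ref{lemma:trace} applied to $u_1$ on the box $\Omega_1$, whose source lies in $\Omega^-$, one has $u_2\equiv0$ in $\Omega^-\setminus\gamma$, so the one-sided limits of $u_2$ and its derivatives on $\gamma$ from the $\Omega^-$ side --- which is exactly the side used by $\Psi_1$ --- vanish, whence $u'_1=\Psi_1(u_2)=0$ and then $u'_2=\Psi_2(u'_1)=0$. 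Thus only the first two sweeps contribute, and to conclude it remains to note $u_2=u$ in $\Omega^+$: since $u_1|_\gamma=u|_\gamma$, applying Lemma~\ref{lemma:trace} to $u$ together with Lemma~\ref{lemma:change_domain} (legitimate since $\kappa_2=\kup$ agrees with $\kappa$ in $\Omega_2$ and is constant) gives $u_2=\Psi_2(u|_\gamma)=u$ in $\Omega^+$. Hence $u^3_{\text{DDM}}=u_1\mu_{\Omega^-}+u_2\mu_{\Omega^+}=u$.

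Case $\bx'\in\Omega_2$ (the essential new case). Now $u_1=0$ and $u_2=\L_{\Omega_2,\kup}^{-1}\delta_{\bx'}$ is only the incident field in the constant medium $\kup$, which is not yet equal to $u$ in $\Omega^+$ because it lacks the $\gamma_L$-reflection. In the second sweep $u'_1=\Psi_1(u_2)$ transfers the trace of this incident field downward into $\Omega_1$; but $\Psi_1$ is built from $G_1=G_{\Omega_1,\kappa}$, the \emph{two-layered} fundamental solution, so the resulting potential automatically contains the transmitted wave below $\gamma_L$ together with the wave reflected by $\gamma_L$, and I would prove $u_1+u'_1=u$ in $\Omega^-$ by the adjoint-symmetry device of \eqref{eq:fund_sol_sym}--\eqref{eq:1x2_delta} --- converting $\Psi_1(G^{\kup}_{\bx'})$ evaluated at $\bx''\in\Omega^-$ into a potential of $G^{\kappa}_{\bx''}$ evaluated at $\bx'$ --- and then invoking Lemma~\ref{lemma:change_domain} for the two-layered subdomain problem and Lemma~\ref{lemma:trace}, exactly as in the chain used to establish \eqref{eq:1x2_Omg2} in Lemma~\ref{lemma:1x2_up}. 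In the third sweep $u'_2=\Psi_2(u'_1)$ carries the trace of $u'_1$ on $\gamma$ --- which now encodes precisely the upward-going reflected wave --- back up through the constant medium $\Omega_2$, and the same adjoint-symmetry plus change-of-domain argument identifies $u_2+u'_2$ with the full two-layered solution $u$ in $\Omega^+$. Along the way one also verifies the orientation bookkeeping, namely $u'_1=0$ in $\Omega^+$ and $u'_2=0$ in $\Omega^-$, again from Lemma~\ref{lemma:trace}. Finally, assembling $u^3_{\text{DDM}}=(u_1+u'_1)\mu_{\Omega^-}+(u_2+u'_2)\mu_{\Omega^+}$ and using $\mu_{\Omega^-}+\mu_{\Omega^+}\equiv1$ (with the $1/2$--$1/2$ split on $\gamma$) gives $u^3_{\text{DDM}}=u$ in $\R^2$ in both cases, and hence for a general bounded smooth $f$.

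The hard part will be the pair of identities in the case $\bx'\in\Omega_2$: showing that propagating the incident trace \emph{down} through the two-layered Green's function $G_1$ and then \emph{back up} through the constant-medium Green's function $G_2$ reconstructs the reflected component of the global two-layered solution in $\Omega^+$ \emph{exactly}. This requires chaining, in the right order and with the right one-sided limits onto $\gamma$, the adjoint symmetry \eqref{eq:fund_sol_sym}, the change-of-domain identity (Lemma~\ref{lemma:change_domain}) now applied to a two-layered subdomain, and Lemma~\ref{lemma:trace}, while ruling out spurious contributions on the wrong side of $\gamma$; I expect the extended-subdomain and limiting argument from the proof of Lemma~\ref{lemma:change_domain} (the $\Omega_2^{\eps}$ construction) to be needed to make these manipulations rigorous. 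Everything else is a bookkeeping variant of arguments already carried out for Lemma~\ref{lemma:1x2_up}.
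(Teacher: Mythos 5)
Your proposal is correct and takes essentially the approach the paper intends: the paper's own proof of this lemma is only a short remark declaring the case $\eta_L<0$ a mirror of Lemma~\ref{lemma:1x2_up} (direct wave from the first upward sweep, reflection/refraction in $\Omega_1$ from the downward sweep, reflection in $\Omega_2$ from the extra upward sweep), and your write-up is precisely that mirror carried out in detail via the point-source reduction, the adjoint symmetry \eqref{eq:fund_sol_sym}, Lemma~\ref{lemma:change_domain} and Lemma~\ref{lemma:trace}. The one small imprecision is in the case $\bx'\in\Omega_1$: to get $\Psi_2(u_1)=0$ in $\Omega^-$ you should apply Lemmas~\ref{lemma:trace} and~\ref{lemma:change_domain} to $u$ on the box $\Omega$ (after noting $u_1=u$ near $\gamma$ by PML exactness in the constant upper medium), rather than ``to $u_1$ on the box $\Omega_1$'' --- for that choice the construction $B_2=\Omega^+\cap B$ degenerates --- but you in fact invoke the correct combination one line later, so this is a phrasing issue rather than a gap.
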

\begin{proof}
	It is obvious that  the exact solution can be constructed by the first (upward) sweep in the situation of $\supp(f) \subset \Omega_1$.
	In the  situation of $\supp(f) \subset \Omega_2$, the direct wave in $\Omega_2$ is obtained in the first (upward) sweep, the reflection and refraction wave in $\Omega_1$ is obtained in the second (downward) sweep, and the reflection wave in $\Omega_2$ is obtained in the third (upward) sweep. The details are omitted here since the case of $\eta_L < 0$ is a mirror of the case $\eta_L > 0$, and all the differences are due to the order of sweepings in our DDM algorithm.   
\end{proof}

For the general one-dimensional strip partition $1\times N_2$, 
the number of sweeps of the one-dimensional sweeping DDM to construct the exact solution for the  two-layered media problem also depends on the position of the layer interface. For example, for the partitions $1 \times N_2$, assume $f = f_{1,N_2}$ and the horizontal media interface satisfies that $\gamma_L$ lies in $\Omega_{1,j_L}$ with $j_L<N_2$,
then the first (upward) sweep constructs the direct wave in $\Omega_{1,N_2}$,
the second  (downward) sweep constructs the direct wave in $\Omega_{1,j}$, $j=N_2-1,\ldots,j_L$,
the reflection in $\Omega_{1,j_L}$ and the refraction in $\Omega_{1,j}$, $j=j_L,\ldots,1$.
An extra upward sweep is still needed to construct the reflection in $\Omega_{1,j}$, $j={j_L+1,\ldots,N_2}$.
By using Lemma \ref{lemma:change_domain} for the trace transfer and similar arguments in Lemmas \ref{lemma:1x2_up} and \ref{lemma:1x2_down} for the $1\times 2$ partition,
we can obtain the following result.
\begin{thm} \label{lemma:1xN}
	Assume that $f$ is a bounded smooth source and the domain $\Omega$ is decomposed into $1\times N_2$ subdomains, then
	the DDM solution generated by the one-dimensional sweeping DDM  with one extra upward sweep is the exact solution to $\P_{\Omega,\kappa}$ with the source $f$ in the two-layered media case.        
\end{thm}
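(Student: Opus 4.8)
The plan is to reduce the statement to the two-subdomain building blocks of Lemmas~\ref{lemma:1x2_up} and~\ref{lemma:1x2_down} and then propagate those local identities across the whole strip by an induction on the subdomain index, in the same spirit that the constant-medium Lemma~\ref{lemma:quadr} was lifted to Lemma~\ref{lemma:single_src}. First, by linearity of the PML problem and of Algorithm~\ref{alg:diag2D_t} in the source --- so that $u(f)=\sum_{j}u(f_{1,j})$ and likewise for the DDM solution, exactly as in Theorem~\ref{thm:sweep2d} --- it suffices to treat a source supported in a single subdomain, $\supp(f)\subset\Omega_{1,j_0}$. If the media interface is vertical, then the sweeping direction is transverse to the layering, every extended wave number $\kappa_{1,j}$ carries the same two-layered profile, and the argument of Theorem~\ref{thm:sweep2d}, which rests only on Lemmas~\ref{lemma:trace} and~\ref{lemma:change_domain}, applies essentially verbatim: exactness is already reached after the forward and backward sweeps, and the extra upward sweep, whose input traces are then either zero or unusable under the trace-routing prescription of the algorithm, contributes nothing. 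So the real work is in the horizontal-interface case.

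For a horizontal interface let $j_L$ be the index with $\gamma_L\subset\Omega_{1,j_L}$, which is well defined because the interface avoids the subdomain boundaries. Off $j_L$ the extended wave numbers are constant ($\kdown$ if $j<j_L$, $\kup$ if $j>j_L$), while $\kappa_{1,j_L}$ is genuinely two-layered; hence the only subdomain solve that can produce a reflected constituent is the one on $\Omega_{1,j_L}$, and its transmitted/reflected splitting is exactly what Lemmas~\ref{lemma:1x2_up}--\ref{lemma:1x2_down} established (the adjoint-fundamental-solution identity and the Fubini interchange there carry the genuinely analytic content, and nothing of that kind is needed afresh). I would then run the three sweeps and maintain, by induction on the step index within each sweep, a cutoff-weighted identity of the form ``$\sum_{j}u_{1,j}^{l}\,\mu_{1,j}$ equals the appropriate regional PML solution on the union of subdomains visited so far'', the inductive step being a one-dimensional instance of the trace-transfer gluing of Lemma~\ref{lemma:trace} combined with the domain change of Lemma~\ref{lemma:change_domain}, just as in the proof of Lemma~\ref{lemma:quadr}. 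In the representative configuration with the source above the interface subdomain (the one that genuinely forces the extra sweep, as in the text's example with $f=f_{1,N_2}$ and $j_L<N_2$), this reads: the forward sweep assembles the direct field up to $\Omega_{1,N_2}$; the backward sweep assembles the field from the top downward and, on solving $\Omega_{1,j_L}$, produces the refracted field below the interface together with a boundary trace carrying the reflection; and the extra upward sweep transports that reflected trace back up through $\Omega_{1,j}$, $j>j_L$. Summing the three cutoff-weighted identities and telescoping the $\mu$'s, as in the closing display of the proof of Lemma~\ref{lemma:quadr}, then gives $u_{\text{DDM}}=u$ on all of $\Omega$.

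The main obstacle is not any new analysis but the bookkeeping needed to make the three-sweep count close. One has to state the inductive hypothesis precisely enough to track, for each subdomain and after each sweep, which of the three constituents --- direct, refracted, reflected --- has been assembled, and then verify that the trace-routing prescription of Algorithm~\ref{alg:diag2D_t} (specialized to a single strip, i.e.\ the $N_2$-subdomain analogue of the explicit tracking in Lemmas~\ref{lemma:1x2_up}--\ref{lemma:1x2_down}) carries the reflected trace into the extra upward sweep and into no earlier sweep, so that no constituent is double-counted or missed. Tied to this is the need to unify the geometric configurations of Lemmas~\ref{lemma:1x2_up} and~\ref{lemma:1x2_down}: depending on whether $j_0\le j_L$ or $j_0>j_L$ the reflection travels downward or upward and the extra sweep is needed or merely harmless, so the induction splits into a few sub-cases mirroring those two lemmas, each closed by the same Lemma~\ref{lemma:trace}/Lemma~\ref{lemma:change_domain} argument together with the local splitting at $\Omega_{1,j_L}$. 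With the hypothesis set up at that level of precision the remaining steps are routine.
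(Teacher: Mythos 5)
Your proposal is correct and follows essentially the same route as the paper: the paper's own argument for this theorem is exactly the sketch you describe --- reduce to the $1\times 2$ building blocks of Lemmas~\ref{lemma:1x2_up} and~\ref{lemma:1x2_down}, propagate along the strip via Lemma~\ref{lemma:change_domain}, and track the direct/refracted/reflected constituents across the forward, backward, and extra upward sweeps (with the vertical-interface case dispatched by the two-sweep remark preceding Lemma~\ref{lemma:1x2_up}). Your version is, if anything, more explicit than the paper's about the inductive bookkeeping and the case split on the relative position of $j_0$ and $j_L$.
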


%
%
%
%
\subsubsection{With  general checkerboard partitions}

For a general checkerboard partition of $\Nbx \times \Nby$, we first show that the DDM solution of Algorithm \ref{alg:diag2D_t} 
is exactly the solution to $\P_{\Omega, \kappa}$ in the case of two-layered media and horizontal media interface when $\supp(f)$ is not contained in any of the subdomains above the media interface $\gamma_L$.
%
Without loss of generality, the $2 \times 2$ partition case is considered for demonstration.
For simplicity, the following notations are used for  the $2\times 2$ partition of the domain $\Omega = [-l_1, l_1] \times [-l_2, l_2]$.  Let $\OmegaXminus$ and $\OmegaXplus$ be the left and right half-plane, while $\OmegaYminus$ and $\OmegaYplus$ be the lower and upper half-plane, respectively.  The characteristic functions for half-planes are defined as $\chi_{i}^{\pm} = \chi_{\Omega^{\pm}_{x_i}}$, $i = 1,2$.  The four quadrant planes are denoted as $\quadOne$, $\quadTwo$, $\quadThree$ and $\quadFour$, and the $x$- and $y$-axes are denoted as $\gamma_1$ and $\gamma_2$, respectively, as shown in Figure \ref{fig:2layer}-(right). Assume the media interface is horizontal and $\eta_{L}>0$,
similar to Lemma \ref{lemma:1x2_up} for the $1\times 2$ partition, 
such media interface condition implies that the source can not lies in any of the subdomains above the media interface, hence no matter where the source locates,  four diagonal sweeps is enough to produce the exact solution.
\begin{lemmaa} \label{lemma:2x2_up}
	Assume that $f$ is a bounded smooth source, the media interface is horizontal and $\eta_L > 0$, and the domain $\Omega$ is decomposed into $2 \times 2$ subdomains. Then the DDM solution $u_{\text{DDM}}$ of Algorithm \ref{alg:diag2D_t}  is the exact  solution to $\P_{\Omega,\kappa}$ with the source $f$ in the two-layered media case.        
\end{lemmaa}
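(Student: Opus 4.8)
The plan is to reduce the $2\times 2$ two-layered statement to ingredients already in hand: the constant-medium diagonal-sweep bookkeeping of Lemmas \ref{lemma:quadr} and \ref{lemma:single_src} applied within each row, and the $1\times 2$ strip analysis of Lemma \ref{lemma:1x2_up} applied between the two rows. First I would use linearity: both $u$ and $u_{\text{DDM}}$ depend linearly on $f=\sum_{i,j}f_{i,j}$, so it suffices to prove the claim when $\supp(f)\subset\Omega_{i_0,j_0}$ for a single $(i_0,j_0)$, the general case following by superposition as in Theorem \ref{thm:sweep2d}. Then I would record the effect of the wave-number extension \eqref{eq:wave_number}: since $\eta_L>0$ lies strictly inside the upper row, each lower subdomain (and the lower-row region $\OmegaYminus\cap\Omega$) carries the constant wave number $\kdown$, while each upper subdomain (and the upper-row region $\OmegaYplus\cap\Omega$) carries the two-layered wave number with interface $\gamma_L$, which agrees with $\kappa$ on that region. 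In particular no subdomain lies above $\gamma_L$, so only the four diagonal sweeps are at stake.

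The heart of the argument is to run Algorithm \ref{alg:diag2D_t} through its four sweeps and verify, step by step as in the proof of Lemma \ref{lemma:single_src}, that each subdomain problem is solved with the correct transferred traces. Horizontal (within-row) transfers are covered without change: Lemma \ref{lemma:change_domain} only asks that the receiving subdomain's wave number equal $\kappa$ there and be constant or two-layered, which holds for both rows, so the row-restricted glueing identities in the spirit of \eqref{eq:horz}, \eqref{eq:vert} and the corner identity \eqref{eq:quadr} remain valid. The new phenomenon is the vertical transfer from the lower row to the upper row, which happens in the two upward sweeps $(+1,+1)$ and $(-1,+1)$. Here I would view the lower-row and upper-row regions as the two layers of a degenerate $1\times 2$ partition — both are rectangles, the lower with constant $\kdown$, the upper with the two-layered $\kappa$ and $\eta_L>0$ — and invoke the conclusion of Lemma \ref{lemma:1x2_up}: after the upward transfer, the accumulated upper-row solution is already \emph{exact in the whole upper half-plane} $\OmegaYplus$, i.e. it already contains both the field transmitted through $\gamma_L$ and the reflection off $\gamma_L$ that is supported below $\gamma_L$ but still inside the upper row. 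The complementary reflected field, which must return into $\OmegaYminus$, is carried by the downward traces shed by the upper subdomains; by Rule \ref{rule2d_a} these are not used in the two upward sweeps, and since those sweeps are not opposite to the downward sweeps $(+1,-1)$ and $(-1,-1)$, Rule \ref{rule2d_b} lets them be consumed in the latter, where the downward sweep over the lower row (constant $\kdown$) reconstructs that reflected field exactly, just as the downward sweep in Lemma \ref{lemma:1x2_up} produces \eqref{eq:1x2_Omg1}. Assembling the four contributions with the cutoffs $\mu_{i,j}$ then gives $u_{\text{DDM}}=u$ in $\Omega$.

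The step I expect to be the main obstacle is making the ``apply Lemma \ref{lemma:1x2_up} row-wise'' reduction rigorous. One must check that, \emph{before} a vertical transfer is invoked, the preceding horizontal transfers have already reproduced the exact regional solution on the row in question — so that the object playing the role of ``$u_1$'' in Lemma \ref{lemma:1x2_up} is well defined and exact on $\OmegaYminus$ — which is exactly where the inductive $2\times2$-subregion argument of Lemma \ref{lemma:quadr} is reused; and one must confirm that the reciprocity computation behind \eqref{eq:1x2_Omg2}--\eqref{eq:1x2_Omg1}, together with the cutoff bookkeeping, is unaffected when the two ``subdomains'' are replaced by unions of subdomains with matching wave numbers along the interface $\gamma_1$. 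Both points are routine but notation-heavy; the cleanest write-up phrases the $2\times2$ case as ``Lemma \ref{lemma:single_src} applied along $x_1$ within each row'' glued to ``Lemma \ref{lemma:1x2_up} applied along $x_2$ between the rows,'' with Rules \ref{rule2d_a}--\ref{rule2d_b} guaranteeing, as in the constant case, that no trace is used in the wrong sweep or counted twice.
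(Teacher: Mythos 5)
Your overall strategy---constant-medium horizontal bookkeeping within each row combined with the $1\times 2$ two-layered analysis of Lemma \ref{lemma:1x2_up} for the vertical transfers, after reducing to $\supp(f)\subset\Omega_{i_0,j_0}$ by linearity---is the same strategy the paper follows, and you correctly locate the difficulty in the interaction between the two. However, the ``row-wise'' reduction as you state it cannot be made rigorous in that literal form, and this is a genuine gap rather than notation. In the diagonal sweep with $\supp(f)\subset\Omega_{1,1}$, the vertical transfer to $\Omega_{1,2}$ and the horizontal transfer to $\Omega_{2,1}$ both occur at step~2 of the first sweep; the lower row is \emph{not} complete before the vertical transfer is invoked, so the object playing the role of ``$u_1$'' in Lemma \ref{lemma:1x2_up} is not the full lower-row regional solution. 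What actually saves the argument is column-wise, not row-wise: $u_{1,2}^{1}$ coincides with the upper part of the \emph{left-column} $1\times2$ solution (exact in $\quadTwo$, equal to an auxiliary reflected field in $\quadThree$), while $u_{2,1}^{1}$ carries only the direct wave $u_{\abbrPred}$ in $\quadFour$. Exactness in the upper-right quadrant is then established only at the corner subdomain $\Omega_{2,2}$, which receives two traces of different physical content.

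The missing computation is precisely there. One must introduce the auxiliary field $\widetilde{u}_{\abbrRefl}$ (the PML solution of the reflection posed on the upper subdomain, agreeing with $u_{\abbrRefl}$ on $\gamma_1$ and decaying exponentially below it) and split the corner potential as in \eqref{eq:2x2_u22},
\begin{equation*}
u_{2,2}^{1}
=\Big(\Psi_{-1,0;2,2}(u\chi_{2}^{+})+\Psi_{0,-1;2,2}(u\chi_{1}^{+})\Big)
+\Big(\Psi_{-1,0;2,2}(\widetilde{u}_{\abbrRefl}\chi_{2}^{-})-\Psi_{0,-1;2,2}\big((u-u_{\abbrPred})\chi_{1}^{+}\big)\Big),
\end{equation*}
where the first bracket is handled by the constant-medium corner identity of Lemma \ref{lemma:quadr} and the second by a separate corner-type trace transfer for the reflected field alone, using $u_{\abbrPred}+u_{\abbrRefl}=u$ on $\gamma_1$. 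This regrouping is the crux of the proof and is not ``routine bookkeeping''; without it, the claim that the upper row becomes exact after the upward sweeps is unsubstantiated. Your description of the downward sweeps recovering the reflection in $\OmegaYminus$ is correct in outcome (and matches \eqref{eq:2x2_u11_R}--\eqref{eq:2x2_u21_final}), though note that the downward traces generated in the $(+1,+1)$ sweep are consumed in the $(+1,-1)$ sweep --- the blanket statement that the upward sweeps ``are not opposite to'' both downward sweeps is false, since $(+1,+1)$ and $(-1,-1)$ are opposite; the argument survives only because the relevant traces happen to be admissible in $(+1,-1)$ first.
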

\begin{proof}

\def\wdff{0.22}
\begin{figure*}[!ht]    
	\centering
	\begin{minipage}[t]{\wdff\linewidth}
		\centering
		\includegraphics[width=0.9\textwidth]{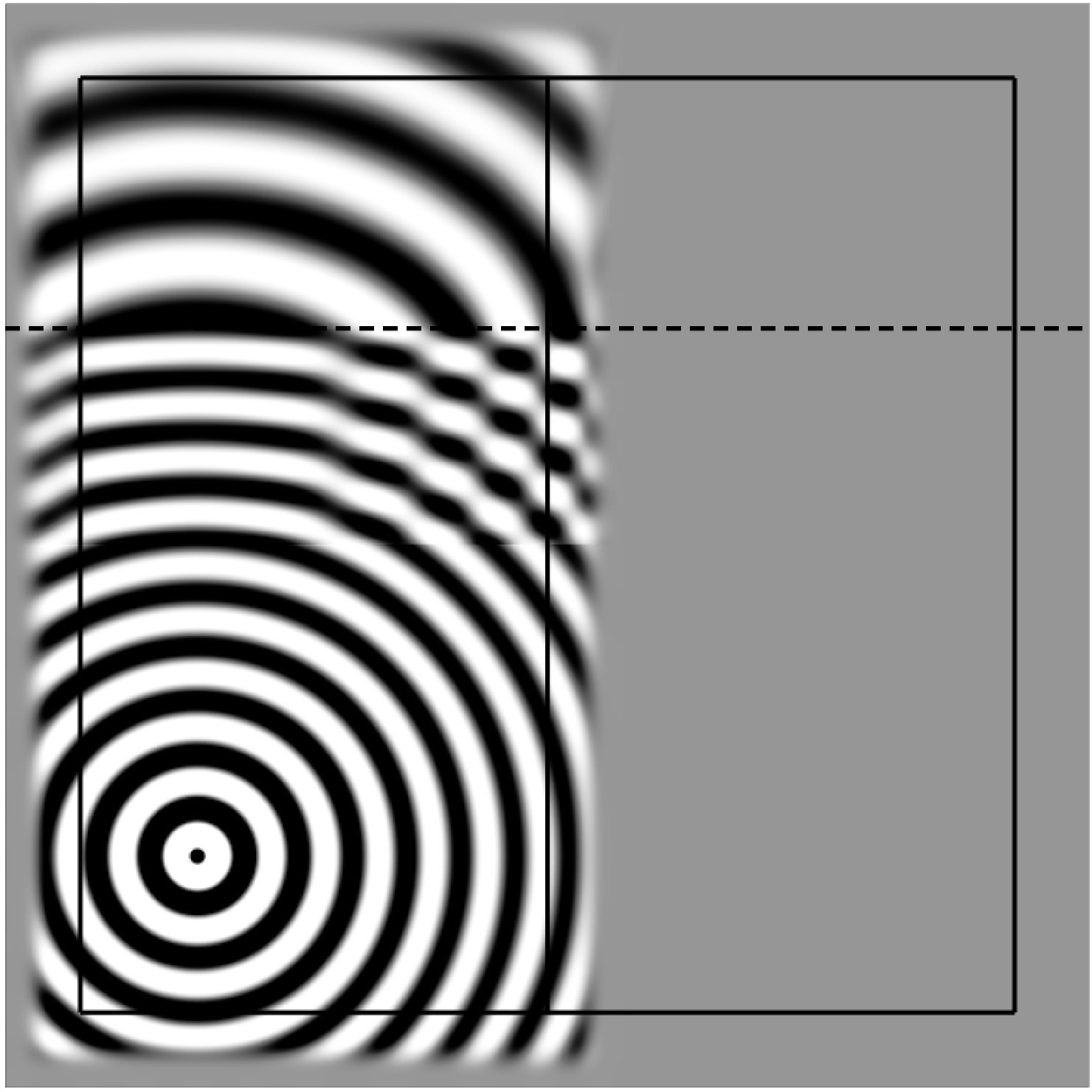}\\
		(a) $u_{\abbrLeft}$
	\end{minipage}
	\begin{minipage}[t]{\wdff\linewidth}
		\centering
		\includegraphics[width=0.9\textwidth]{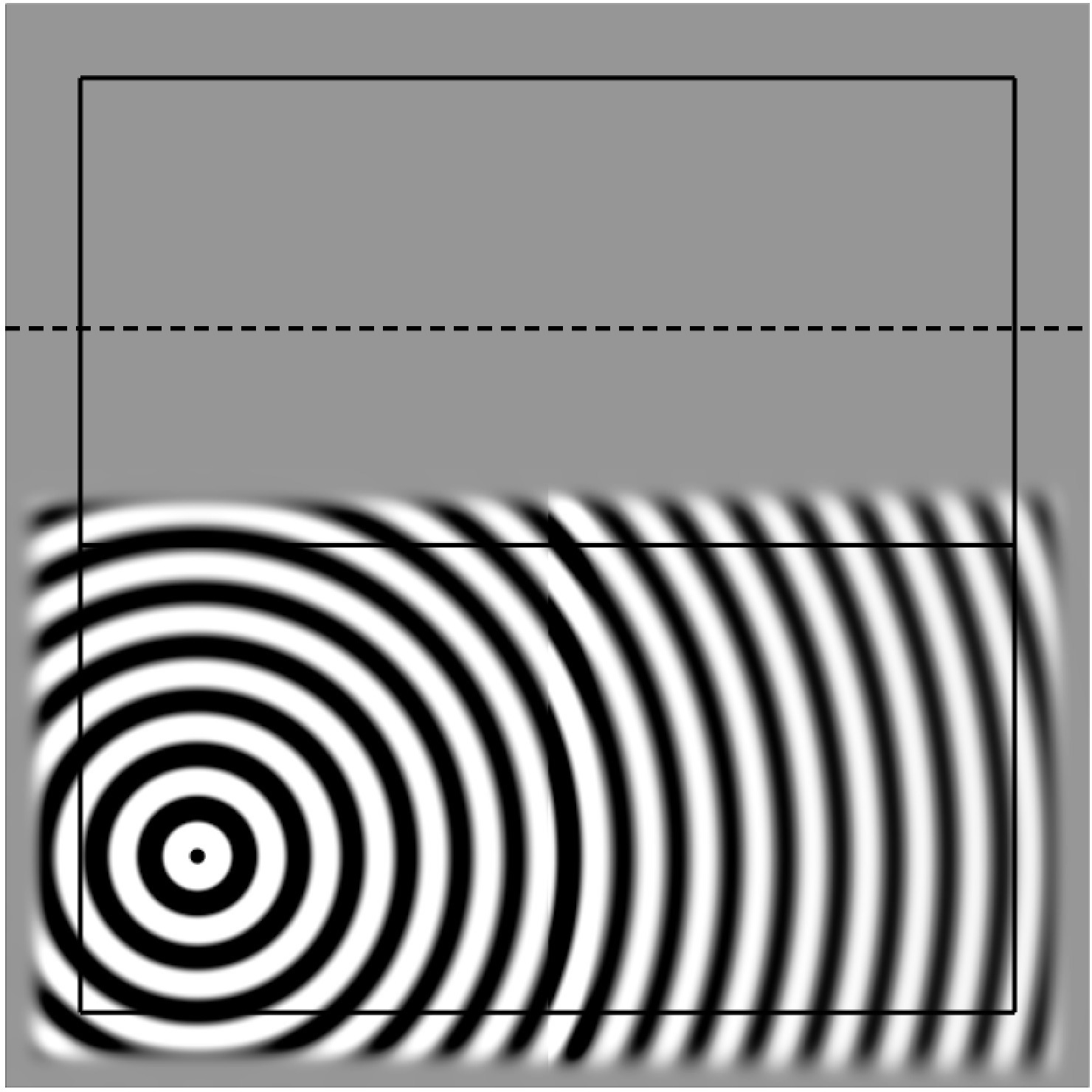}\\
		(b) $u_{\abbrPred}$
	\end{minipage}
	\begin{minipage}[t]{\wdff\linewidth}
		\centering
		\includegraphics[width=0.9\textwidth]{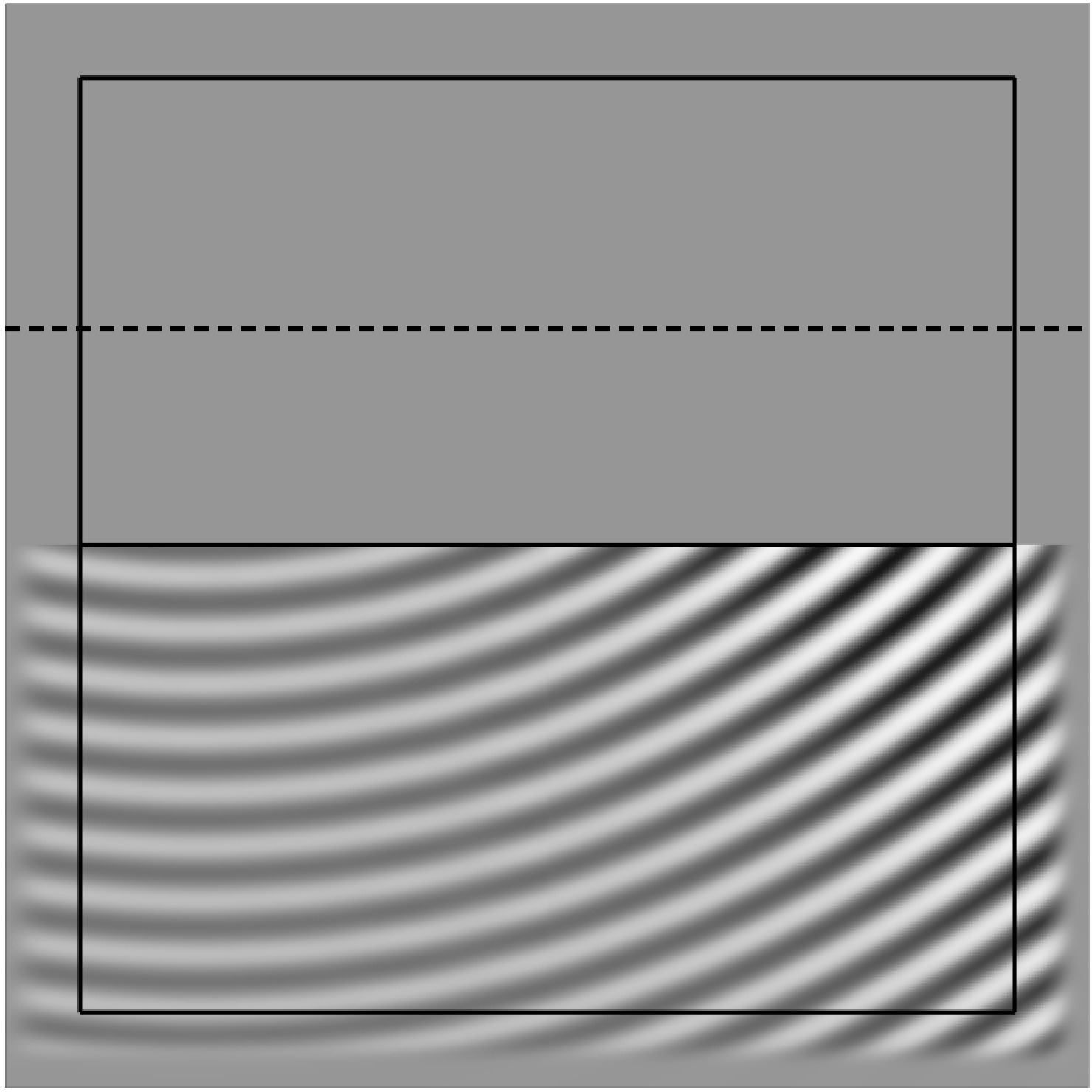}\\
		(c) $u_{\abbrRefl}$
	\end{minipage}
	\begin{minipage}[t]{\wdff\linewidth}    
		\centering
		\includegraphics[width=0.9\textwidth]{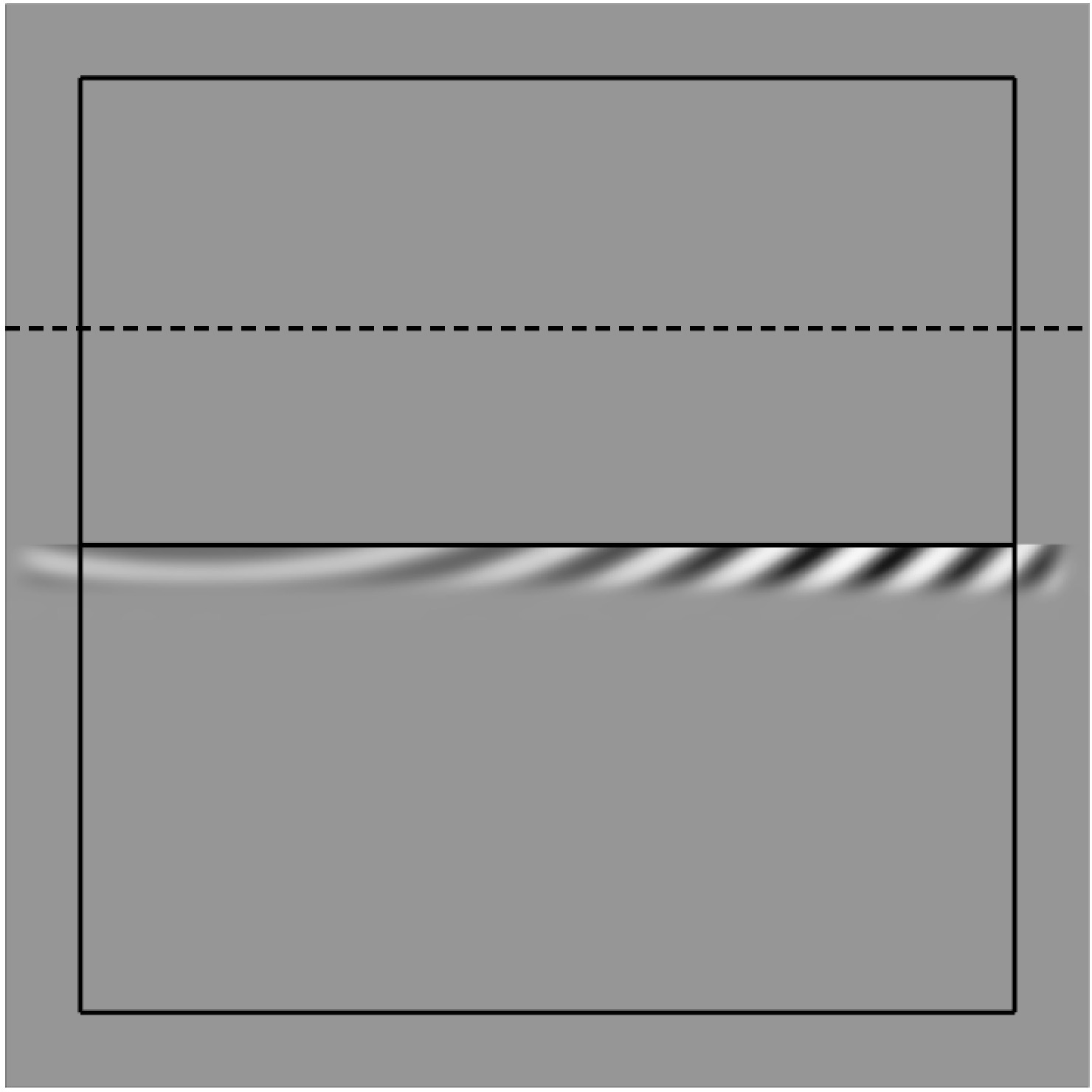}\\
		(d) $\widetilde{u}_{\abbrRefl}$
	\end{minipage}  
	\caption{{Illustration of the partial solutions in the two-layered media case  with the $2 \times 2 $ domain partition, $\eta_L>0$ and $\supp(f) \subset \Omega_{1,1}$.} \label{fig:sf22r_part}}
\end{figure*}

Following the similar line of augments used in Theorem \ref{thm:sweep2d} for constant medium, and using  Lemma \ref{lemma:change_domain}, 
it is easy to prove the lemma holds in the case that the source lies within the upper subdomains $\Omega_{1,2}$ and $\Omega_{2,2}$.
Now we only consider the case that the source lies within subdomain $\Omega_{1,1}$, since lying in any of the two lower subdomains makes no essential difference.

It is clear that the  PML problem $\P_{\Omega_{1,1;1,2},\kappa}$ of the {\bf left} half region is a two-layered media problem using the $1\times 2$ partition, of which the solution with the source $f$ is denoted as $u_\abbrLeft$, as shown in Figure \ref{fig:sf22r_part}-(a).
If we regard the regions $\Omega_{1,2;1,1}$ and $\Omega_{1,2;2,2}$ as two subdomains, then the problem $\P_{\Omega, \kappa}$ still can be treated as a two-layered media problem with the $1\times 2$ partition  as discussed in the preceding subsection, of which the lower subdomain solution in the upward sweep is the \textit{direct} wave,  denoted as $u_{\abbrPred}$ (shown in Figure \ref{fig:sf22r_part}-(b)). The lower subdomain solution in the downward sweep is the \textit{reflection} from the media interface, denoted as $u_{\abbrRefl}$ (shown in Figure \ref{fig:sf22r_part}-(c)). Since the reflection in the lower subdomain is caused by the medium change in the upper subdomain, we can define the PML solution of the reflection for the upper subdomain problem as $\widetilde{u}_{\abbrRefl}$, as illustrated in Figure \ref{fig:sf22r_part}-(d), which satisfies $u_{\abbrRefl} = \widetilde{u}_{\abbrRefl}$ on $\gamma_1$ and $\widetilde{u}_{\abbrRefl}$ decays exponentially in the lower half-plane. 

\def\wdff{0.22}
\begin{figure*}[!ht]    
        \centering
        \begin{minipage}[t]{\wdff\linewidth}
                \centering
                \includegraphics[width=0.9\textwidth]{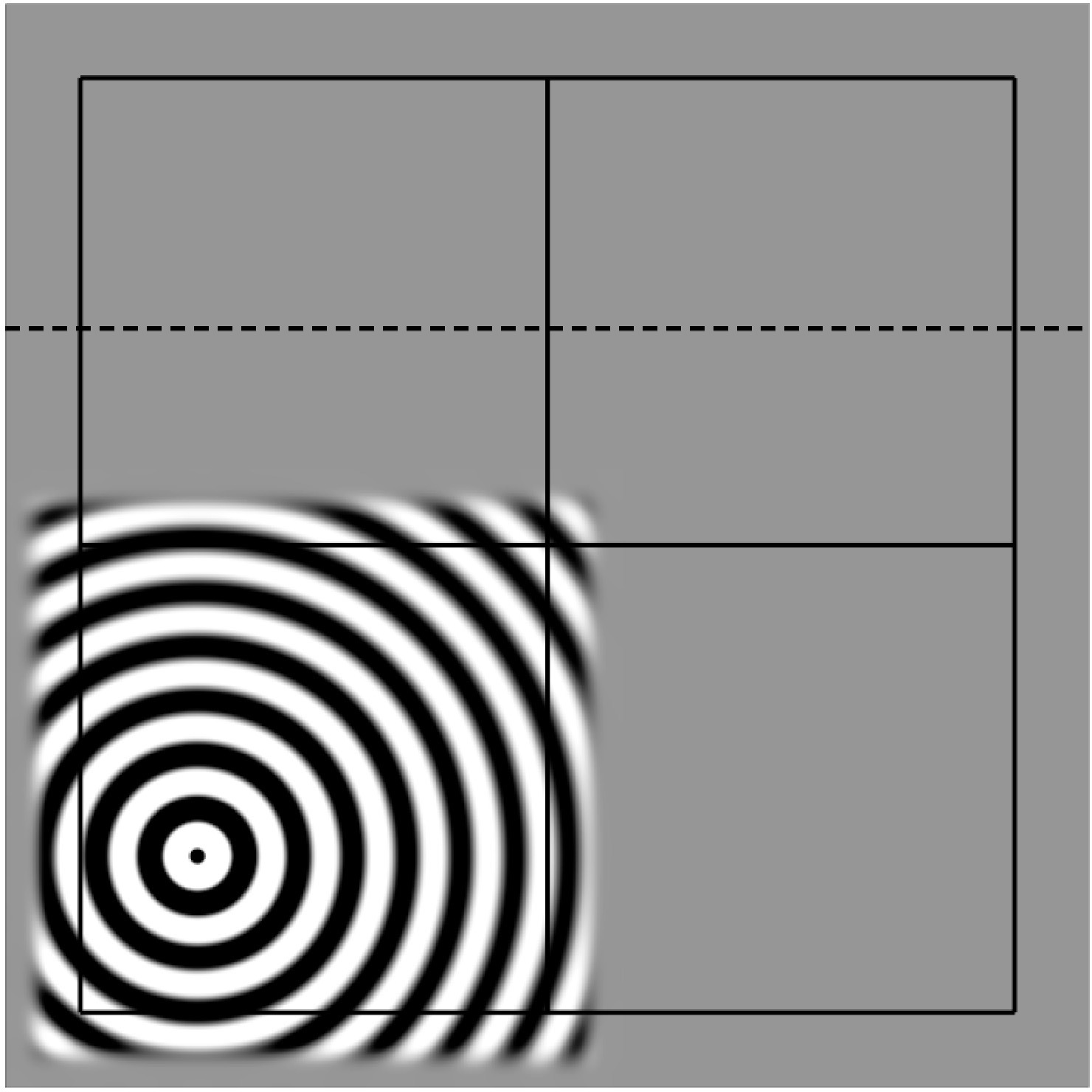}\\
                (a) $u_{1,1}^{1}$
        \end{minipage}
        \begin{minipage}[t]{\wdff\linewidth}
                \centering
                \includegraphics[width=0.9\textwidth]{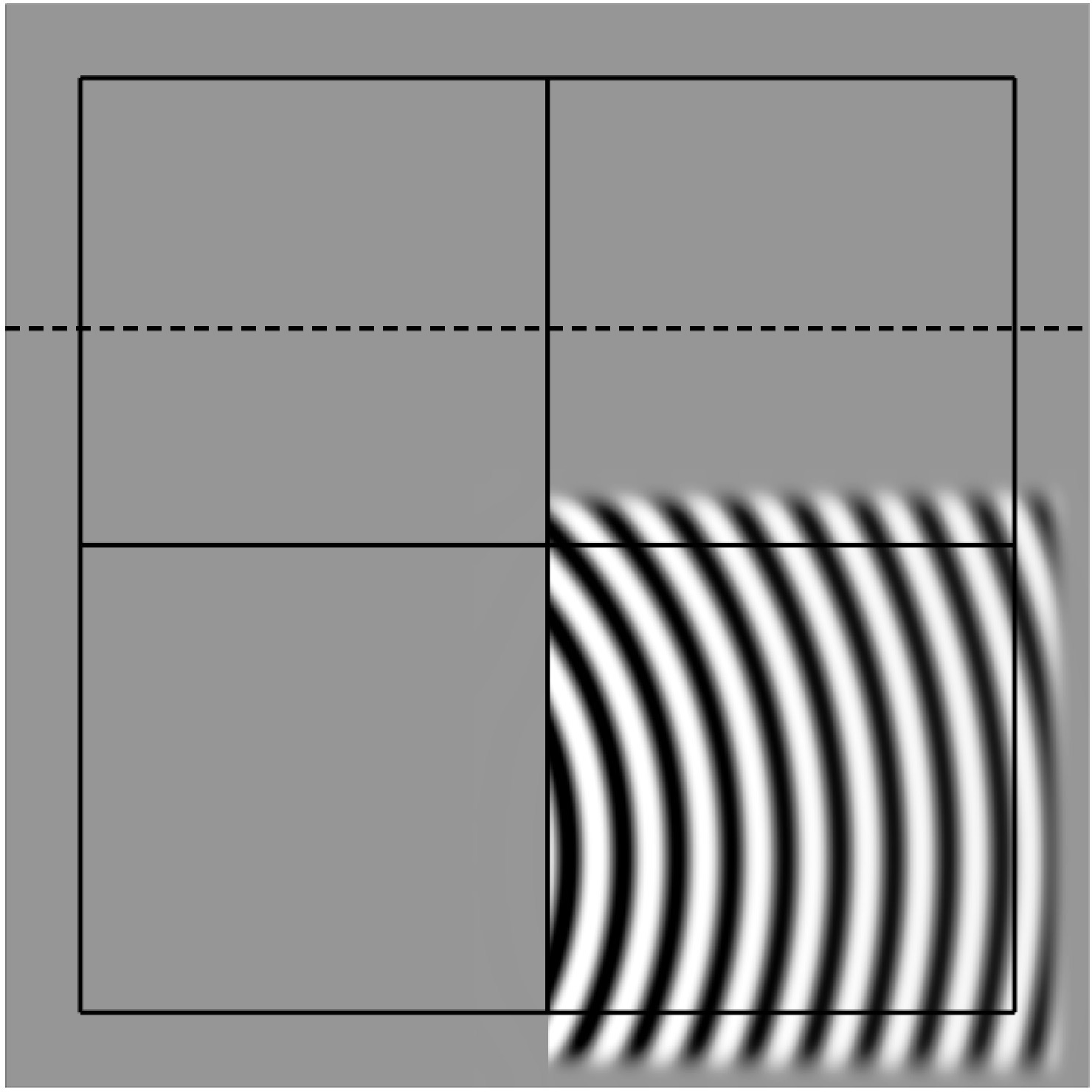}\\
                (b) $u_{2,1}^{1}$
        \end{minipage}
        \begin{minipage}[t]{\wdff\linewidth}
                \centering
                \includegraphics[width=0.9\textwidth]{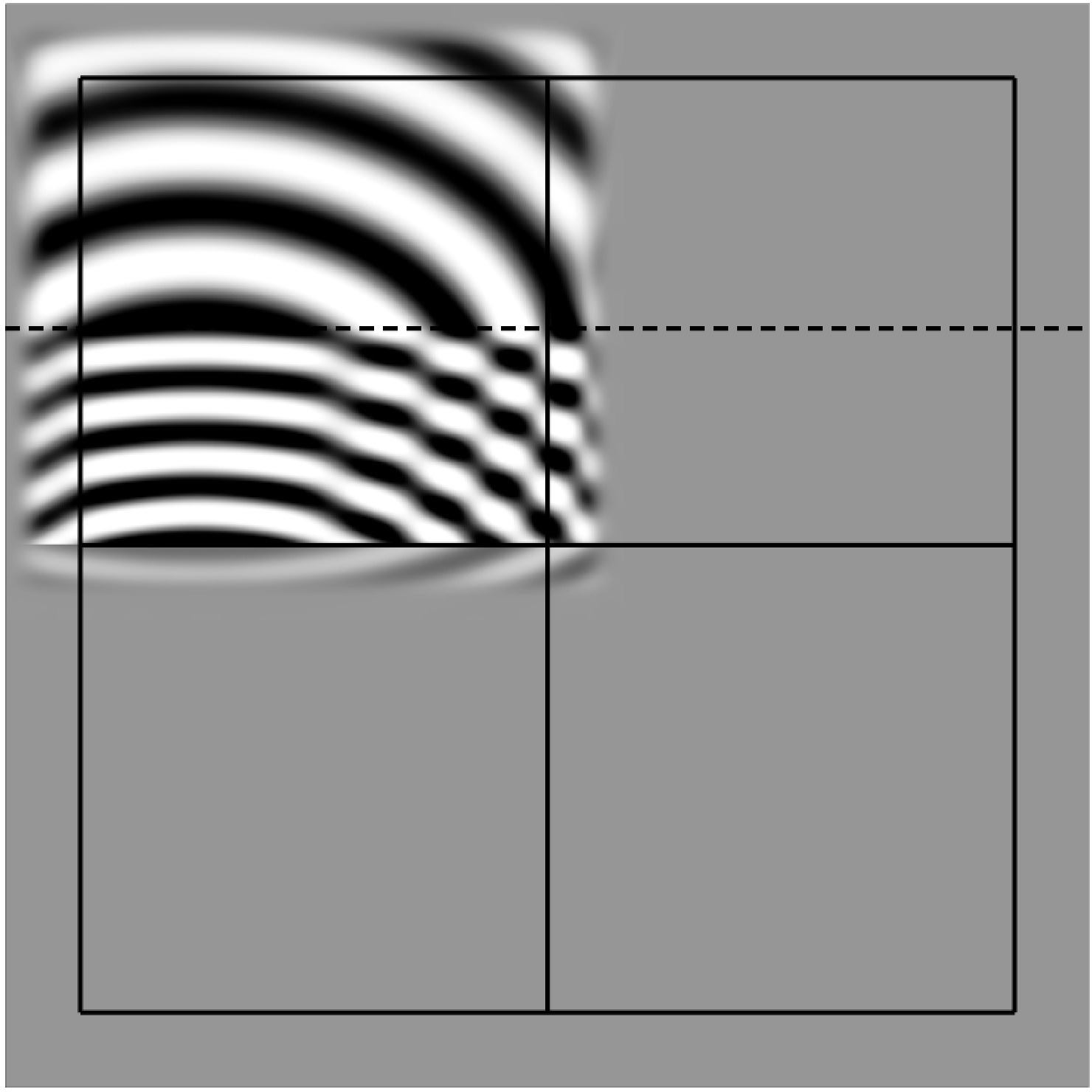}\\
                (c) $u_{1,2}^{1}$
        \end{minipage}
        \begin{minipage}[t]{\wdff\linewidth}    
                \centering
                \includegraphics[width=0.9\textwidth]{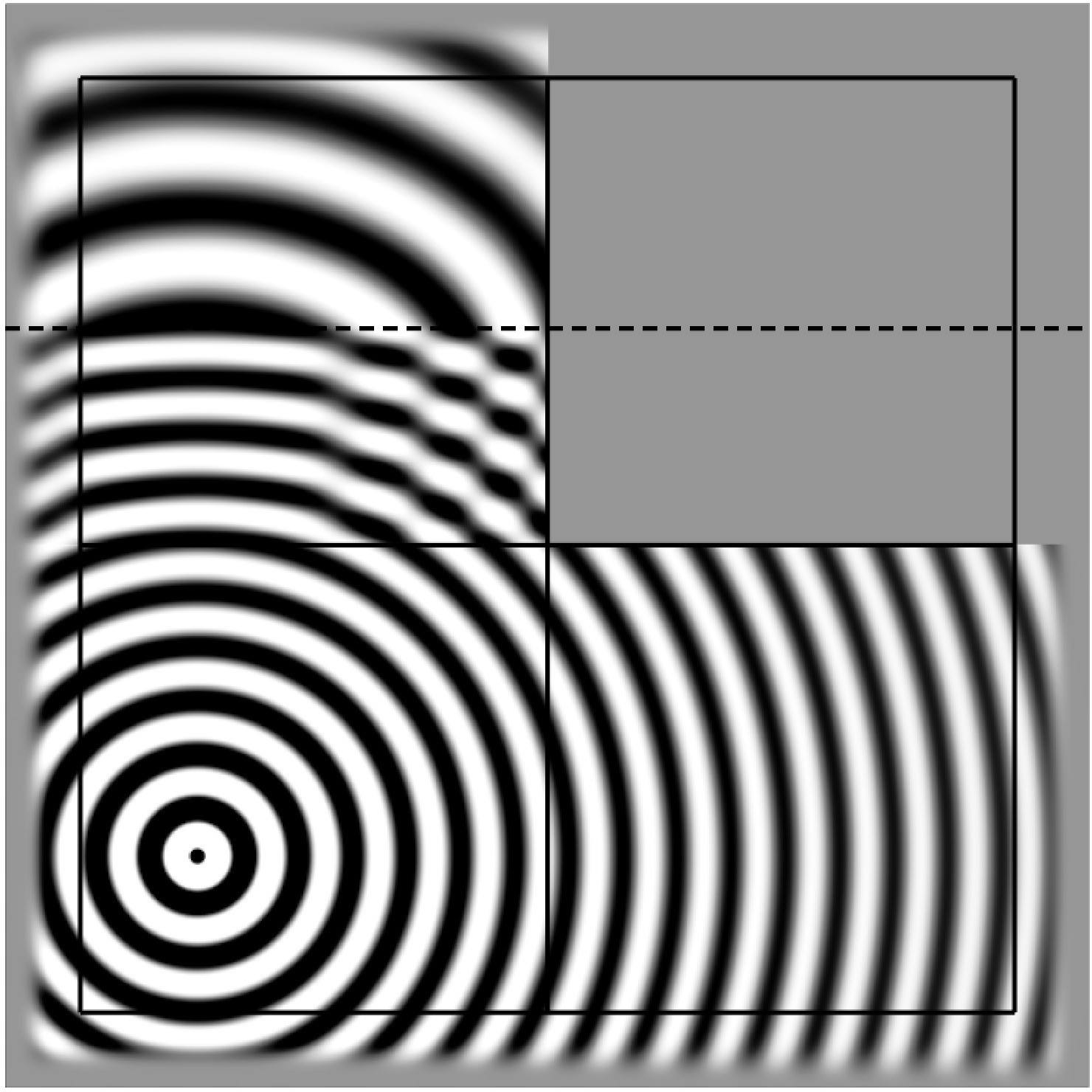}\\
                (d) $U_{2}^{(1)}$
        \end{minipage}  \\      
        \vspace{0.2cm}
        
        \begin{minipage}[t]{\wdff\linewidth}
                \centering
                \includegraphics[width=0.9\textwidth]{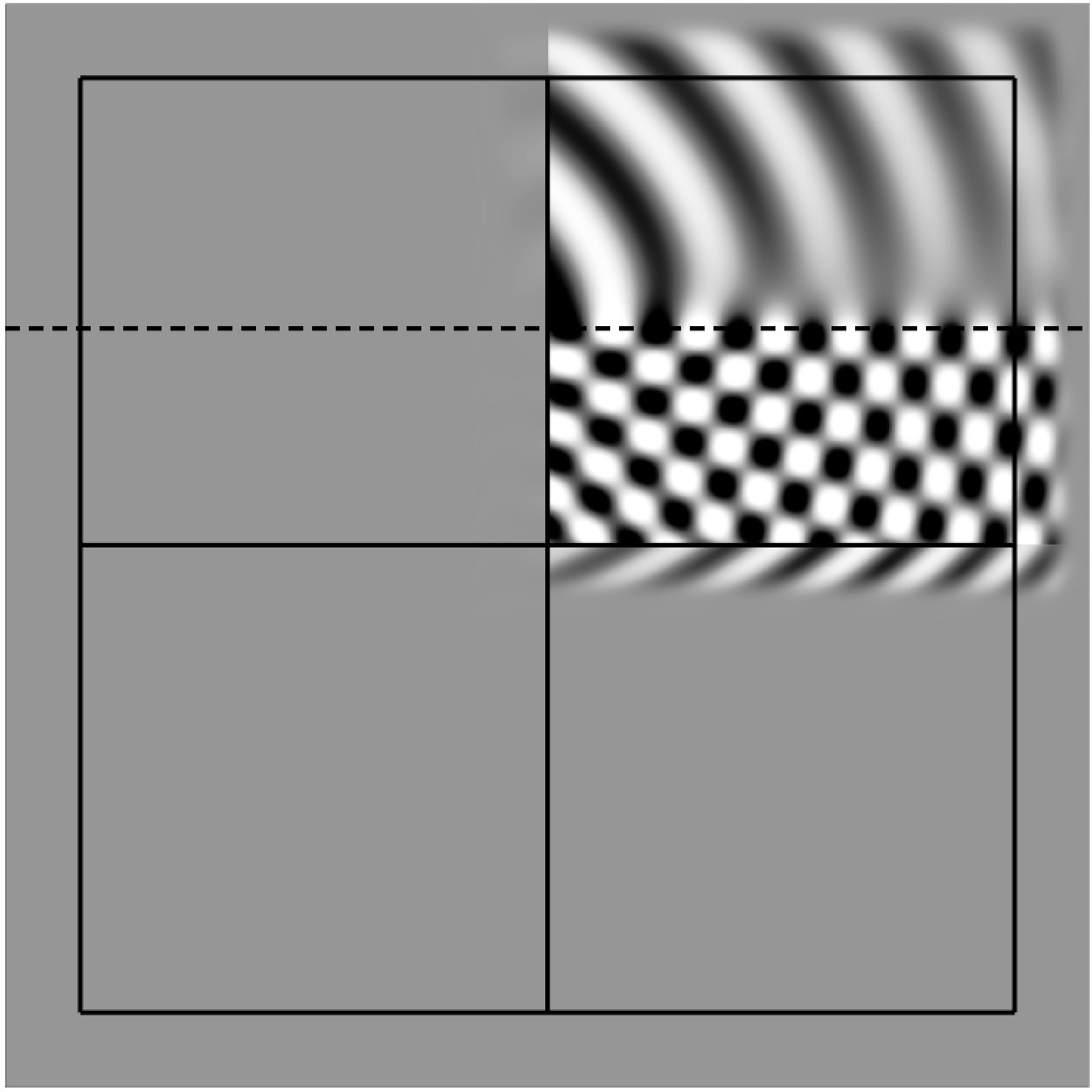}\\
                (e) $u_{2,2}^{1} $
        \end{minipage}
        \begin{minipage}[t]{\wdff\linewidth}
                \centering
                \includegraphics[width=0.9\textwidth]{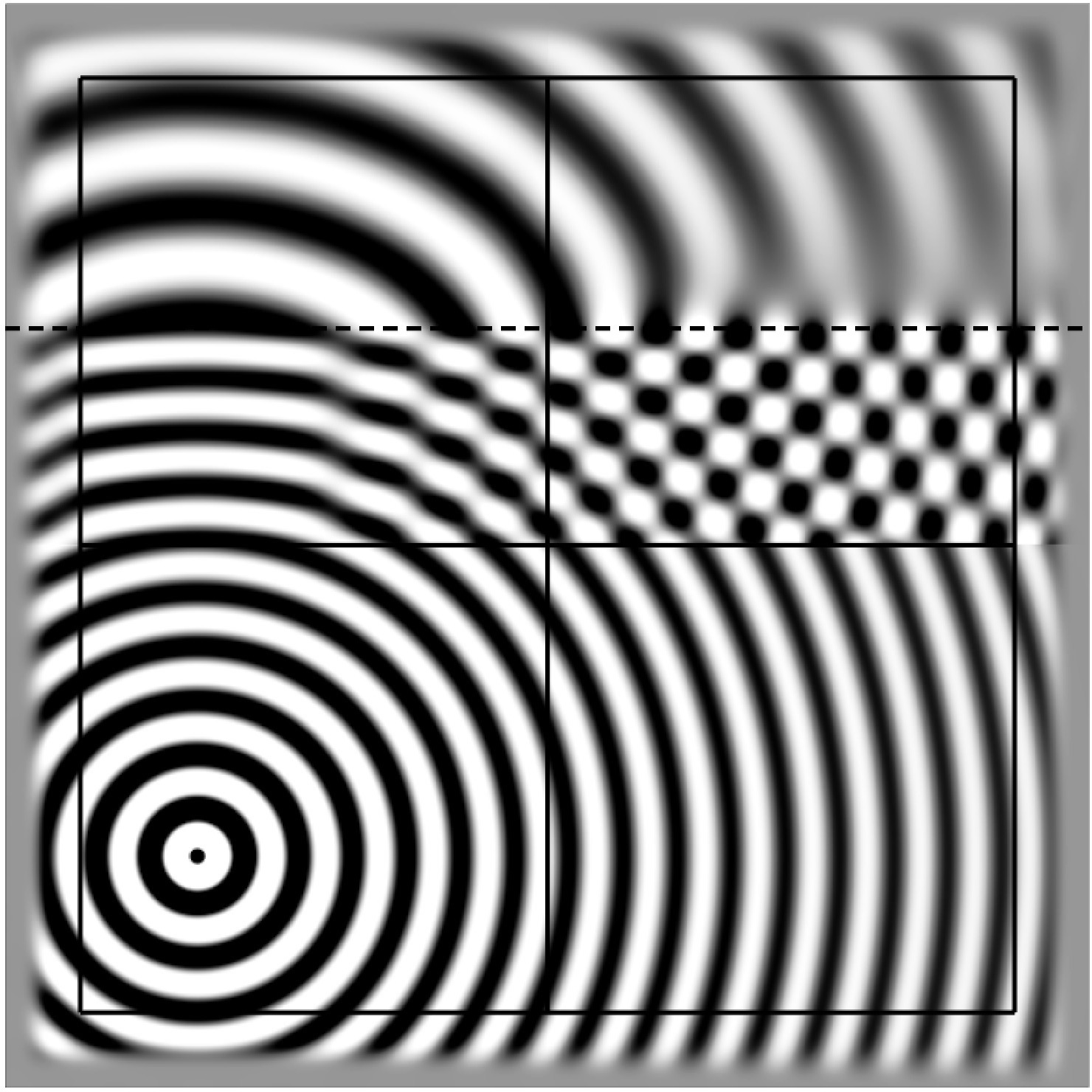}\\
                (f) $U^{(1)} = U^{(2)}$
        \end{minipage}
        \begin{minipage}[t]{\wdff\linewidth}    
                \centering
                \includegraphics[width=0.9\textwidth]{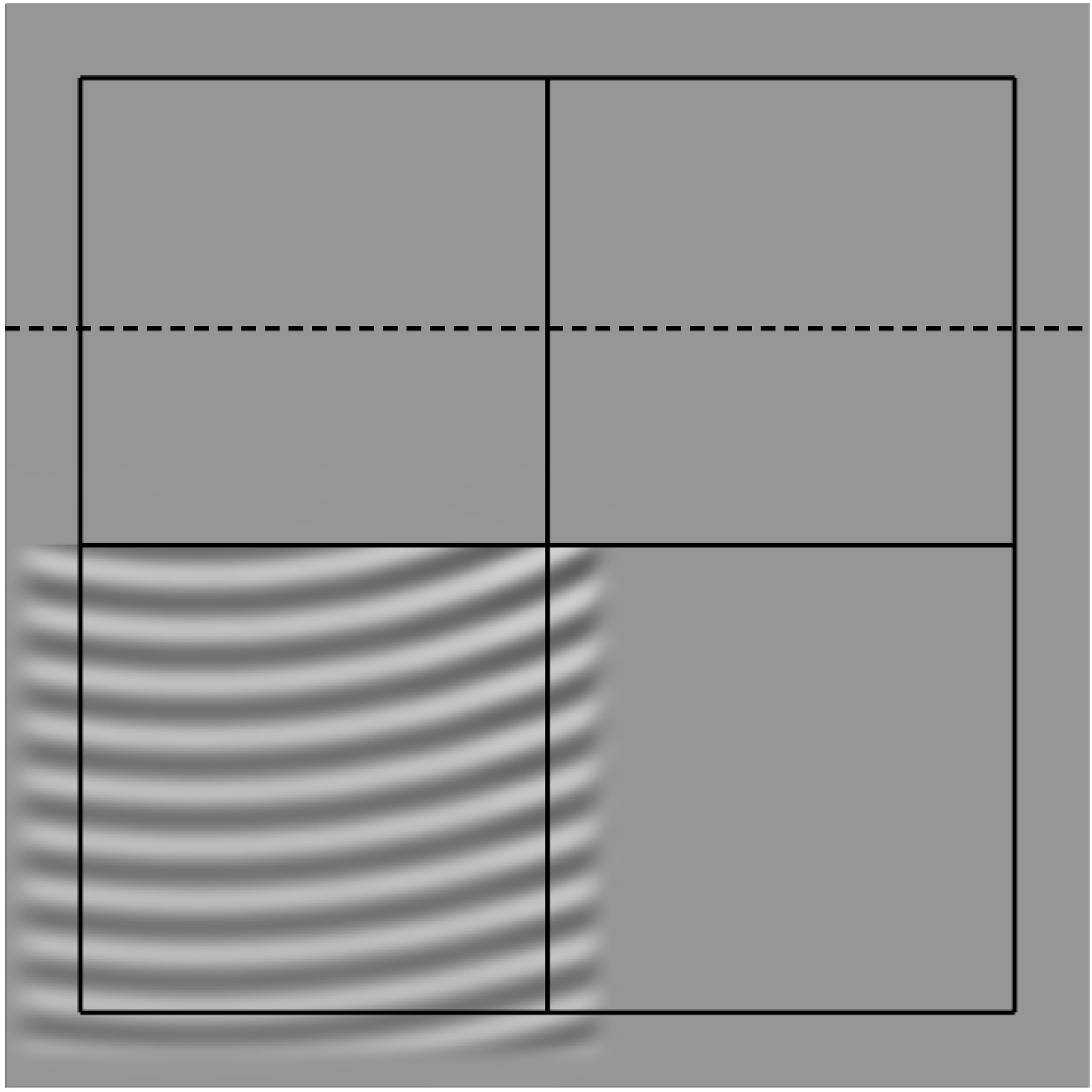}\\
                (g) $u_{1,1}^{3} $
        \end{minipage}
        \begin{minipage}[t]{\wdff\linewidth}
                \centering
                \includegraphics[width=0.9\textwidth]{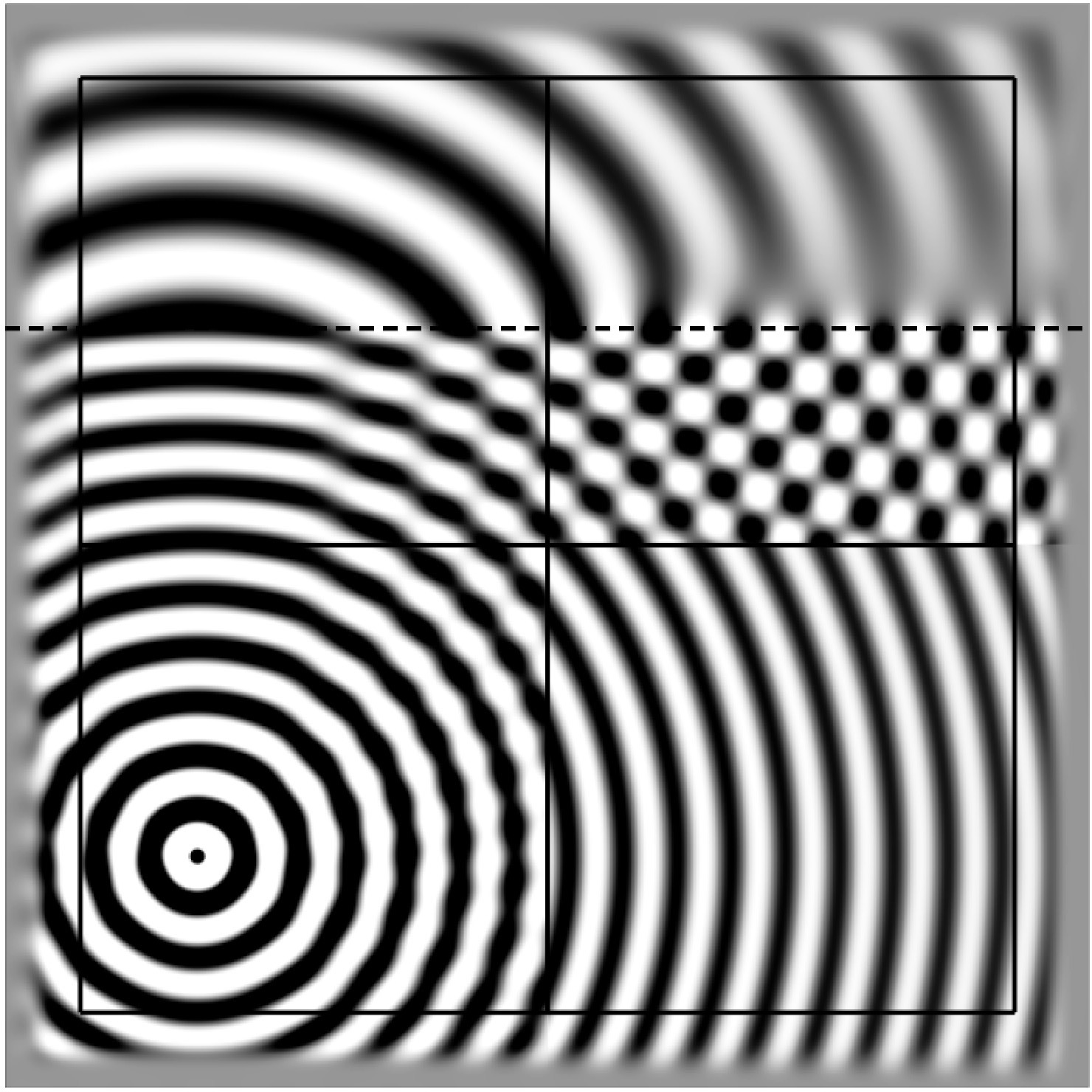}\\
                (h) $U^{(3)}_2$
        \end{minipage}          
        \vspace{0.2cm}
        
        \begin{minipage}[t]{\wdff\linewidth}
                \centering
                \includegraphics[width=0.9\textwidth]{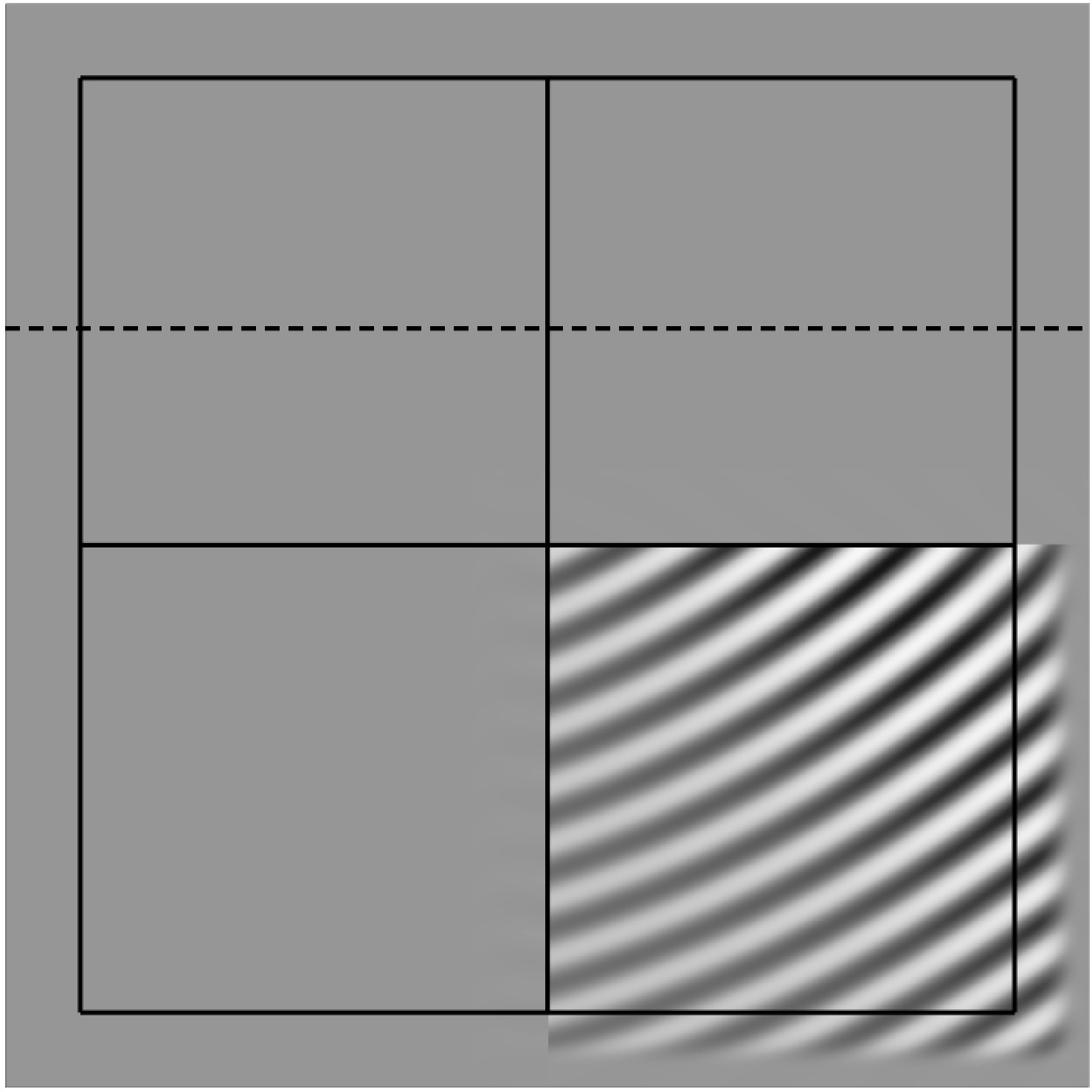}\\
                (i) $u_{2,1}^{3}$
        \end{minipage}
        \begin{minipage}[t]{\wdff\linewidth}
                \centering
                \includegraphics[width=0.9\textwidth]{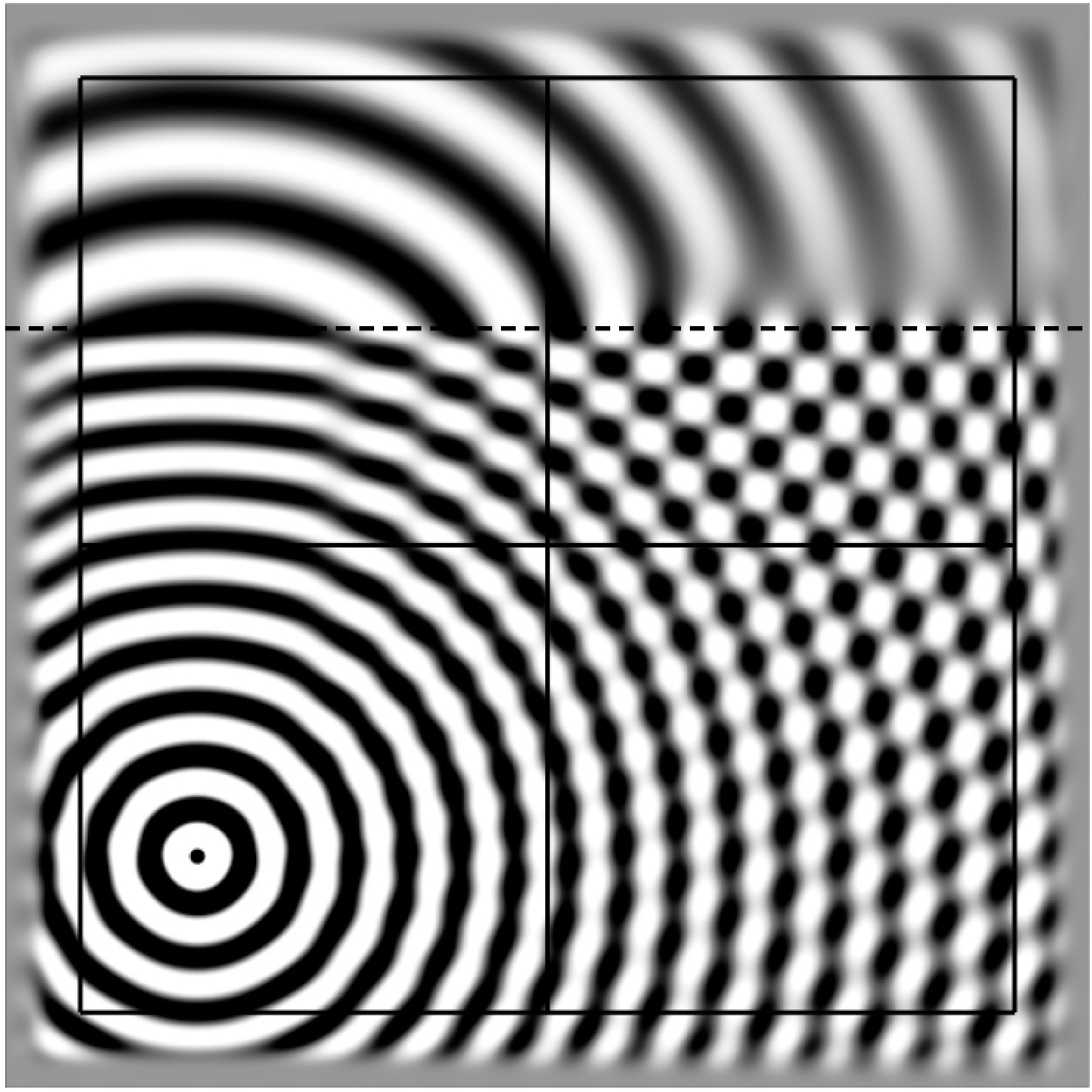}\\
                (j) $U^{(3)} =U^{(4)} = u $
        \end{minipage}
              
        \caption{{Illustration of the sweeping process of Algorithm  \ref{alg:diag2D_t}  in the two-layered media case with the $2 \times 2 $ domain partition, $\eta_L>0$ and $\supp(f) \subset \Omega_{1,1}$. $U^{(l)}$ ($l = 1,2,3,4$) denotes the DDM solution after sweep $l$, and $U^{(l)}_{i}$ denotes the DDM solution at the $i$-th step of sweep $l$.  
                } \label{fig:sf22r}}
\end{figure*}

In the \textbf{first sweep} of direction $(+1,+1)$, at step 1 the subdomain problem of $\Omega_{1,1} $ is solved with the source $f$, the solution $u_{1,1}^{1}$ is obtained, as shown in Figure \ref{fig:sf22r}-(a).
At step 2, the subdomain problem of $\Omega_{2,1} $ is solved with the transferred trace from $\Omega_{1,1}$, the solution $u_{2,1}^{ 1 }$ is obtained, as shown in Figure \ref{fig:sf22r}-(b). By applying the horizontal trace transfer on $\Omega_{1,2; 1,1}$, we can show that
\begin{align} \label{eq:2x2_u21}
u_{2,1}^{ 1 } =
\left\{
\begin{array}{ll}
u_{\abbrPred},   & \text{in} \,\, \quadFour,  \\
0,   & \text{in} \,\, \quadThree,
\end{array}
\right. 
\end{align}
thus one upward transferred traces are generated from $\Omega_{2,1}$. In addition, the subdomain problem in $\Omega_{1,2} $ is solved with the transferred trace from $\Omega_{1,1}$, the solution $u_{1,2}^{ 1 }$ is obtained, as shown in Figure \ref{fig:sf22r}-(c), and by applying the vertical  trace transfer for the two-layered media on $\Omega_{1,1; 1,2}$, we have that
\begin{align} \label{eq:2x2_u12}
u_{1,2}^{ 1 } =
\left\{
\begin{array}{ll}
u,   & \text{in} \,\, \quadTwo, \\
\widetilde{u}_{\abbrRefl},   & \text{in} \,\, \quadThree.
\end{array}
\right. 
\end{align}
This implies that two transferred traces are generated from $\Omega_{1,2}$, one is rightwards and the other is downwards. 
At step 3, the subdomain problem in $\Omega_{2,2} $ is solved with the transferred traces from $\Omega_{1,2}$ and $\Omega_{2,1}$ and the solution $u_{2,2}^{ 1 }$ is obtained, as shown in Figure \ref{fig:sf22r}-(d). We will show that $u_{2,2}^{ 1 } = u$ in $\Omega_{2,2}$.
By \eqref{eq:2x2_u21} and \eqref{eq:2x2_u12}, it is easy to see that
\begin{align} \label{eq:2x2_u22}
u_{2,2}^{ 1 } =&\; \Psi_{-1,0; 2,2} (u_{1,2}^{ 1 }) +  \Psi_{0,-1; 2,2} (u_{2,1}^{ 1 }) \nonumber \\
=&\; \Psi_{-1,0; 2,2} (u \chi_{2}^{+} + \widetilde{u}_{\abbrRefl} \chi_y^{-}  ) +  \Psi_{0,-1; 2,2} (u_{\abbrPred} \chi_{1}^{+}) \nonumber \\
=&\; \Big( \Psi_{-1,0; 2,2} (u \chi_{2}^{+}) +  \Psi_{0,-1; 2,2} (u \chi_{1}^{+}) \Big) + \Big( \Psi_{-1,0;2,2} (\widetilde{u}_{\abbrRefl} \chi_{2}^{-}) - \Psi_{0,-1; 2,2} ((u-u_{\abbrPred}) \chi_{1}^{+}) \Big).
\end{align}
By Lemma \ref{lemma:change_domain} and  applying the trace transfer on $\Omega_{2,2} $,  we have 
\begin{align} \label{eq:2x2_u22_1st}
\Psi_{-1,0; 2,2} (u \chi_{2}^{+}) +\Psi_{0,-1; 2,2} (u \chi_{1}^{+})  = u \chi_{1}^{+} \chi_{2}^{+}. 
\end{align}
and then by using the fact that  $u_{\abbrRefl}+u_{\abbrPred} = u$  on $\gamma_1$ and Lemma \ref{lemma:change_domain}, it holds
\begin{align} \label{eq:2x2_u22_2nd}
\Psi_{-1,0; 2,2} (\widetilde{u}_{\abbrRefl} \chi_{2}^{-}) - \Psi_{0,-1; 2,2} ((u-u_{\abbrPred}) \chi_{1}^{+}) 
=\widetilde{u}_{\abbrRefl} \chi_{1}^{+} \chi_{2}^{-}.
\end{align}
Therefore, with \eqref{eq:2x2_u22_1st} and \eqref{eq:2x2_u22_2nd} we obtain
\begin{align} \label{eq:2x2_u22_final}
u_{2,2}^{ 1 } = u \chi_{1}^{+} \chi_{2}^{+} + \widetilde{u}_{\abbrRefl} \chi_{1}^{+} \chi_{2}^{-},  
\end{align}
which implies that the exact solution in $\Omega_{2,2}$ is obtained, as shown in Figure \ref{fig:sf22r}-(e), 
and a downward transferred trace is generated on $\gamma_1$.
After the first sweep, the subdomain solutions of $\Omega_{1,2}$ and $\Omega_{2,2}$ are exact, while the reflection waves are missing in the subdomain solutions of $\Omega_{1,1}$ and $\Omega_{2,1}$, as shown in Figure \ref{fig:sf22r}-(f).
Besides, two downwards transferred traces from $\Omega_{1,2}$ and $\Omega_{2,2}$ are generated and will be used in the remaining sweeps.

In the \textbf{second sweep} of direction $(+1,-1)$, all the subdomain sources and solutions are zero  since the two transferred traces are not used in this sweep due to  Rule  \ref{rule2d_a} (similar directions).
In the \textbf{third sweep} of direction  $(-1,+1)$, the subdomain solution is zero in step 1. Then at step 2, the subdomain problem of $\Omega_{1,1}$ with the transferred trace from $\Omega_{1,2}$ is solved, and the solution $u_{1,1}^{ 3 }$ is obtained. By the previous arguments for the two-layered subdomains, we have 
\begin{align} \label{eq:2x2_u11_R}
u_{1,1}^{ 3 } =
\left\{   
\begin{array}{ll}
u_{\abbrRefl},     & \text{in} \,\, \quadThree, \\
0, & \text{in} \,\, \OmegaYplus,
\end{array}
\right.
\end{align}   
as shown in Figure \ref{fig:sf22r}-(g).
This implies that the missing reflections in $\Omega_{1,1}$ is recovered, as shown in Figure \ref{fig:sf22r}-(h),   and a rightward transferred trace is generated.
At step 3, the subdomain problem of $\Omega_{2,1}$ with the transferred traces from $\Omega_{1,1}$ and $\Omega_{2,2}$ is solved, and the solution $u_{2,1}^{3}$ is obtained.  By \eqref{eq:2x2_u22_final}-\eqref{eq:2x2_u11_R} and Lemma \ref{lemma:change_domain}, we obtain that
\begin{align} \label{eq:2x2_u21_final}
u_{2,1}^{ 3 }  = &\;\Psi_{-1,0; 2,1} (u_{1,1}^{ 3 }) + \Psi_{0,+1; 2,1}
(u_{2,2}^{ 1 }) \nonumber \\
=&\;\Psi_{-1,0; 2,1} (u_{\abbrRefl} \chi_{2}^{-}) + \Psi_{0,+1; 2,1}
(u_{\abbrRefl} \chi_{1}^{+}) \nonumber\\
=&\; u_{\abbrRefl} \chi_{1}^{+} \chi_{2}^{-},
\end{align}
thus the missing reflection in $\Omega_{2,1}$ is obtained, as shown in Figure \ref{fig:sf22r}-(i), and no more transferred traces are generated.
All subdomain solutions in the  \textbf{fourth sweep} are zero since there are no transferred traces and source, and at last the total exact solution is constructed in $\Omega$, as shown in Figure \ref{fig:sf22r}-(j).
\end{proof}

Next we consider the case of horizontal media interface and $\eta_L < 0$. Similar to Lemma \ref{lemma:1x2_down} for the $1\times 2$ partition, when the source lies in the subdomain above the media interface, one extra round of four (i.e., $2^2 = 4$ in $\R^2$) diagonal sweeps is needed to produce the exact solution, and  we have the following Lemma.
\begin{lemmaa} \label{lemma:2x2_down}
	Assume that $f$ is a bounded smooth source, the media interface is horizontal and $\eta_L < 0$, and the domain $\Omega$ is decomposed into $2 \times 2$ subdomains. Then the DDM solution generated by Algorithm \ref{alg:diag2D_t} with one extra round of four diagonal sweeps is the exact solution to $\P_{\Omega,\kappa}$ with the source $f$ in the two-layered media case.     
\end{lemmaa}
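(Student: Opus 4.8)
The plan is to marry the argument of Lemma~\ref{lemma:2x2_up} with the extra-sweep bookkeeping of Lemma~\ref{lemma:1x2_down}, reading the $2\times2$ diagonal sweeping as a vertical $1\times2$ sweep between the bottom row $\Omega_{1,2;1,1}$ and the top row $\Omega_{1,2;2,2}$, with a horizontal $1\times2$ sweep between the two columns carried out inside it. First I would reduce to a single-subdomain source: the maps $f\mapsto u$ and $f\mapsto u_{\text{DDM}}$ are both additive over the decomposition $f=\sum_{i,j}f_{i,j}$ and each $f_{i,j}$ is supported in $\overline{\Omega_{i,j}}$, so it suffices to treat $\supp(f)\subset\Omega_{i_0,j_0}$ for each subdomain separately. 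Since $\eta_L<0$, the media interface $\gamma_L$ lies strictly inside the bottom-row subdomains $\Omega_{1,1}$ and $\Omega_{2,1}$, while $\Omega_{1,2}$ and $\Omega_{2,2}$ are entirely in the $\kup$ medium; by the left--right symmetry of the partition only the two cases $j_0=1$ and $j_0=2$ need to be examined.

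When $j_0=1$ the configuration is the up--down mirror image of Lemma~\ref{lemma:2x2_up}: the subdomain problem $\P_{\Omega_{i_0,1},\kappa_{i_0,1}}$ is already two-layered and contains the direct wave together with the reflection off $\gamma_L$ internal to it, and the outgoing waves propagate upward and sideways into the other three subdomains, all of which are reached within the first round of four diagonal sweeps. Running the argument of Lemma~\ref{lemma:2x2_up} with the vertical orientation reversed --- using Lemma~\ref{lemma:change_domain} for every trace transfer, Lemmas~\ref{lemma:1x2_up} and~\ref{lemma:1x2_down} (in mirrored form) for the row/column subproblems, and the fact that the corner-directional transfer is effected implicitly by combining horizontal and vertical transfers --- shows that the exact solution is already produced by these four sweeps, so the extra round contributes nothing.

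The substantive case is $j_0=2$, and by symmetry I take $\supp(f)\subset\Omega_{1,2}$, so that $\kappa_{1,2}\equiv\kup$. Following Lemmas~\ref{lemma:1x2_down} and~\ref{lemma:2x2_up}, I would introduce the partial solutions $u_{\abbrPred}=\L_{\Omega_{1,2},\kup}^{-1}f$, the direct $\kup$ wave (valid in all of $\R^2$); $u_{\abbrRefl}$, the reflection/refraction field off $\gamma_L$ present in the bottom row; and $\widetilde u_{\abbrRefl}$, the PML extension of the reflection as a $\kup$-medium wave into the top row, with $u_{\abbrRefl}=\widetilde u_{\abbrRefl}$ on $\gamma_1$ and decay downward. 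The eight sweeps come in the $d_2$-pattern $+,+,-,-$ (round~1) then $+,+,-,-$ (round~2), which realizes the ``up, down, up'' sequence that Lemma~\ref{lemma:1x2_down} shows to be necessary and sufficient for an upper source, plus a harmless trailing ``down''. Sweep by sweep I expect: (i) the two up-sweeps of round~1 build $u_{\abbrPred}$ throughout the top row and, by Rule~\ref{rule2d_a}, park the downward traces they generate; (ii) the two down-sweeps of round~1 feed those traces into $\Omega_{1,1}$ and $\Omega_{2,1}$, whose two-layered solves --- justified via Lemma~\ref{lemma:change_domain} and the adjoint/reciprocity identity behind Lemma~\ref{lemma:1x2_up} --- reconstruct the exact bottom-row restriction of $u$, while emitting on $\gamma_1$ the upward traces that carry the reflection; (iii) Rules~\ref{rule2d_a}--\ref{rule2d_b} force those upward traces, and everything they spawn, to wait until the two up-sweeps of round~2, which then add precisely $\widetilde u_{\abbrRefl}$ to $\Omega_{1,2}$ and $\Omega_{2,2}$ --- the rightward and upward traces recombining at $\Omega_{2,2}$ to play the role of an implicit corner transfer --- so that $u_{\text{DDM}}=\sum_{l}\sum_{i,j}u_{i,j}^{l}\mu_{i,j}$ equals $u$ on every cell; (iv) the two down-sweeps of round~2 see no source and no live trace, hence contribute nothing.

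The main obstacle is the accounting, which is essentially twice the length of the one in Lemma~\ref{lemma:2x2_up}: one must check that round~1 assembles the direct, refracted and (bottom-row) reflected field in all four subdomains with nothing spurious left over --- in particular that every ``backward'' trace routed to an intermediate sweep (for instance the leftward trace from $\Omega_{2,2}$ to $\Omega_{1,2}$) evaluates to zero because the source lies on the receiving subdomain's own side of the interface --- so that the sole deficit after round~1 is $\widetilde u_{\abbrRefl}$ in the top row, and then that round~2 supplies exactly this deficit without disturbing the already-correct bottom row or the direct wave already present above. The delicate points are verifying that the transferred traces land in the intended sweeps under Rules~\ref{rule2d_a}--\ref{rule2d_b} (the reflection traces must skip from the end of round~1 straight to the up-sweeps of round~2) and that the two-layered trace-transfer identities of Lemmas~\ref{lemma:1x2_up} and~\ref{lemma:1x2_down} survive when the interface sits in a subdomain's interior rather than on a subdomain boundary --- which is precisely what Lemma~\ref{lemma:change_domain} guarantees. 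Once these are in place, the additive reduction together with the $1\times2$ lemmas closes the proof.
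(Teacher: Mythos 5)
Your proposal is correct and follows exactly the route the paper intends: the paper in fact states this lemma without a written proof, deferring to the mirror symmetry with Lemma \ref{lemma:1x2_down} and the machinery of Lemma \ref{lemma:2x2_up}, which is precisely the combination (linearity reduction, the two cases $j_0=1$ and $j_0=2$, the ``up--down--up'' role of the second round's up-sweeps delivering $\widetilde u_{\abbrRefl}$ to the top row, all justified through Lemma \ref{lemma:change_domain} and Rules \ref{rule2d_a}--\ref{rule2d_b}) that you spell out. Your sketch is, if anything, more detailed than the paper's treatment.
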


In the case of  vertical media interface, similar analysis as Lemma \ref{lemma:2x2_up} and Lemma \ref{lemma:2x2_down}
for the horizontal media interface could be carried out,
and the required number of diagonal sweeps to obtain the exact solution follows a similar rule, that is,
 four diagonal sweeps are enough if no source lies in any of the subdomains in the right of the media interface, and eight sweeps are needed otherwise.
We have the following Lemma.
\begin{lemmaa} \label{lemma:2x2_left_right}
	Assume that $f$ is a bounded smooth source, the media interface is vertical, and the domain $\Omega$ is decomposed into $2 \times 2$ subdomains. Then the DDM solution generated by  Algorithm \ref{alg:diag2D_t} with one extra round of four diagonal sweeps is the exact solution to $\P_{\Omega,\kappa}$ with the source $f$ in the two-layered media case.     
\end{lemmaa}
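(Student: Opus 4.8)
The plan is to reduce the vertical-interface case to the horizontal-interface results already proved in Lemmas \ref{lemma:2x2_up} and \ref{lemma:2x2_down}, by means of the coordinate reflection $S:(x_1,x_2)\mapsto(x_2,x_1)$. First I would record what $S$ does. Because the uniaxial PML treats the two coordinate directions on an equal footing, $S$ maps the PML problem \eqref{eq:PML} covariantly --- the coefficients $A_B$, $J_B$ and the operator $\L_{B,\kappa}$ transform by $S$, with the profiles $\sigma_1,\sigma_2$ interchanged --- it carries a vertical media interface $\{x_1=\xi_L\}$ to a horizontal one $\{x_2=\xi_L\}$, it turns the $N_1\times N_2$ checkerboard into an $N_2\times N_1$ one (with subdomain indices transposed, and $l_1\leftrightarrow l_2$) and a source into a source, and, being an isometry, it preserves inner products and hence the ``similar''/``opposite'' relations underlying Rules \ref{rule2d_a} and \ref{rule2d_b}. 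Under $S$ the four diagonal directions obey $(+1,+1)\mapsto(+1,+1)$, $(-1,-1)\mapsto(-1,-1)$ and $(-1,+1)\leftrightarrow(+1,-1)$, so the hard-wired sweep order $(+1,+1),(-1,+1),(+1,-1),(-1,-1)$ of Algorithm \ref{alg:diag2D_t} is sent to $(+1,+1),(+1,-1),(-1,+1),(-1,-1)$, i.e.\ to itself with the second and third sweeps interchanged.

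The second step --- the only one needing real care --- is to show that interchanging the two opposite sweeps $(-1,+1)$ and $(+1,-1)$ in the sweep order leaves the DDM solution unchanged, and likewise in every extra round. Here Rules \ref{rule2d_a} and \ref{rule2d_b} do the work. The cardinal directions similar to $(-1,+1)$ are exactly leftward and upward, while those similar to $(+1,-1)$ are exactly rightward and downward, and these two sets are disjoint; so every transferred trace is destined for at most one of the two sweeps. Furthermore $(-1,+1)$ and $(+1,-1)$ are opposite, so whichever of them runs first never feeds the other (Rule \ref{rule2d_b}); and any trace one of them produces in a direction similar to the other is, in either ordering, blocked from that other sweep by Rule \ref{rule2d_b} and is instead routed --- via the ``smallest admissible sweep number'' rule of Algorithm \ref{alg:diag2D_t} --- to the same later sweep, namely the $(-1,-1)$ sweep of the current round or else the $(+1,+1)$ sweep of the next round. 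Checking the handful of cases of the ``smallest $l'\ge l$'' assignment confirms that every trace lands in the same sweep regardless of the order of the pair, hence the subdomain solves, and therefore $u_{\text{DDM}}$, are unaffected.

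Combining the two steps, running Algorithm \ref{alg:diag2D_t} --- with or without an extra round --- on the vertical-interface problem produces, after applying $S$, precisely the output of Algorithm \ref{alg:diag2D_t} on a horizontal-interface problem with the same number of rounds. By Lemma \ref{lemma:2x2_up} the horizontal output already equals the exact PML solution after one round when no subdomain lies entirely on the far side of the interface (equivalently, when $\supp(f)$ cannot sit entirely beyond it), and by Lemma \ref{lemma:2x2_down} it does so after one extra round in the remaining case; since the media interface does not coincide with a subdomain boundary, exactly one of these two situations occurs, and under $S$ they correspond to $\xi_L$ lying on the ``near'' or ``far'' side of the domain. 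Undoing $S$ gives the claim that one extra round of four diagonal sweeps always suffices --- and, just as for the one-dimensional strip partitions, such an extra round is in general needed when $\supp(f)$ sits beyond the interface.

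The main obstacle, as flagged, is this middle step: the bookkeeping verification that exchanging the opposite pair $(-1,+1)\leftrightarrow(+1,-1)$ in the sweep order is harmless. It is intuitively clear but does require running through the cases of Rules \ref{rule2d_a} and \ref{rule2d_b} together with the ``smallest sweep number $l'\ge l$'' assignment in Algorithm \ref{alg:diag2D_t}. If one prefers to bypass it, the alternative --- as the text preceding the lemma suggests --- is to repeat the figure-by-figure arguments of Lemmas \ref{lemma:2x2_up} and \ref{lemma:2x2_down} verbatim with the horizontal and vertical cardinal trace transfers swapped; that route introduces no new ideas but is considerably longer.
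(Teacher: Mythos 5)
Your proposal is correct, and it is genuinely different from what the paper does: the paper offers no proof of this lemma at all, merely asserting that ``similar analysis as Lemma \ref{lemma:2x2_up} and Lemma \ref{lemma:2x2_down} \ldots could be carried out,'' i.e.\ that one should rerun the figure-by-figure arguments with the roles of horizontal and vertical trace transfers exchanged. Your reflection argument makes that analogy rigorous, and in doing so it isolates the one point the paper's hand-wave glosses over: the sweep order \eqref{Sorder2D} is \emph{not} invariant under the swap $(x_1,x_2)\mapsto(x_2,x_1)$ (it sends the order to one with the second and third sweeps interchanged), so the horizontal-interface lemmas do not apply verbatim to the reflected run. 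Your middle step --- that exchanging the opposite pair $(-1,+1)\leftrightarrow(+1,-1)$ leaves $u_{\text{DDM}}$ unchanged --- closes exactly this gap, and your case analysis is sound: the cardinal directions feeding the two sweeps are disjoint, Rule \ref{rule2d_b} prevents either from feeding the other, the source is exhausted in the first sweep so only trace routing matters, and the ``smallest $l'\ge l$'' assignment sends every remaining trace to the same $(-1,-1)$ or next-round $(+1,+1)$ sweep under either ordering; hence the two sweeps have identical inputs (from sweep $1$ only) and identical outputs. What your route buys is a short, checkable proof that reuses Lemmas \ref{lemma:2x2_up} and \ref{lemma:2x2_down} as black boxes; what the paper's implicit route buys is avoidance of the order-permutation lemma at the cost of redoing the whole two-layered trace-transfer analysis of Figures \ref{fig:sf22r_part} and \ref{fig:sf22r} with the axes swapped. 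One cosmetic caveat: when only four sweeps are needed ($\xi_L$ on the near side), you should note --- as the end of the proof of Lemma \ref{lemma:2x2_up} does --- that no unused traces survive the first round, so the mandated extra round contributes zero and the statement ``with one extra round'' remains valid in both cases; you implicitly use this but do not say it.
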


Using the similar arguments of the Lemma \ref{lemma:single_src} for the constant medium case, we can further  extend the above analysis from the $2\times 2$ partition to the general $N_1 \times N_2$  checkerboard partition.
Again,  similar to the $1 \times N_2$ strip partition, the position of the media interface affects the number of sweeps needed to construct the exact solution, and the following result can be obtained. 
\begin{thm} \label{lemma:NxN}
	Assume that $f$ is a bounded smooth source and the domain $\Omega$ is decomposed into $N_1\times N_2$ subdomains, then the DDM solution generated by  Algorithm \ref{alg:diag2D_t} with one extra round of four diagonal sweeps is the exact solution to $\P_{\Omega,\kappa}$ with the source $f$ in the two-layered media case.        
\end{thm}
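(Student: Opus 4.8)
The plan is to follow the same two-stage reduction used for the constant-medium result (Lemma \ref{lemma:single_src} and Theorem \ref{thm:sweep2d}): first reduce to a source supported in a single subdomain by linearity, then propagate the known regional solutions through the sweeps by an induction over growing unions of subdomains, now carrying the reflected (and, below the interface, the refracted) component along separately. Since $\L_{\Omega,\kappa}$ is a fixed linear operator and every operation in Algorithm \ref{alg:diag2D_t} (subdomain solves, the potential operators $\PotentialVec_{\Box,\vartriangle;i,j}$, trace additions) is linear in the data, we have $u(f)=\sum_{i,j}u(f_{i,j})$ and $u_{\text{DDM}}(f)=\sum_{i,j}u_{\text{DDM}}(f_{i,j})$, so it suffices to treat $\supp(f)\subset\Omega_{i_0,j_0}$. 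By the invariance of the algorithm under reflection of the diagonal directions and of the coordinate axes, we may further assume the media interface $\gamma_L$ is horizontal and contained in the row of subdomains with second index $j_L$; the vertical case is then identical after swapping coordinates, which is exactly what Lemma \ref{lemma:2x2_left_right} records at the $2\times 2$ level.

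\textbf{Case 1: $j_0\le j_L$ (the source subdomain is at or below the interface row).} Here I claim the first round of four diagonal sweeps already yields the exact solution and the extra round is harmless. Exactness is proved by a region induction mirroring Lemma \ref{lemma:quadr} and Lemma \ref{lemma:2x2_up}: for each sweep direction $(d_1,d_2)$ one shows, by induction on the step number $s$, that the accumulated DDM solution over the already-processed union of subdomains coincides with the regional PML solution $u_{i_1,i_2;j_1,j_2}$ restricted to that union. The inductive step decomposes an $(i_1,i_2)\times(j_1,j_2)$ block into its four quadrant sub-blocks and applies Lemma \ref{lemma:change_domain} to each cardinal trace transfer together with the quadrant-combining identity \eqref{eq:u2x2corner_t}. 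The only feature beyond the constant case is that, for blocks straddling row $j_L$, the regional solution splits as an incident field plus a reflection off $\gamma_L$ (legitimate because $\kappa_{i,j}$ is genuinely constant on every subdomain not meeting $\gamma_L$); one checks, via the $2\times 2$ computations \eqref{eq:2x2_u22}--\eqref{eq:2x2_u22_final} and their verbatim generalization, that the upward-propagating diagonal sweeps transport the incident part and the downward ones transport the reflection, so that both components are assembled correctly within the same four sweeps. Finally, as in the last step of the proof of Lemma \ref{lemma:2x2_up}, the fourth sweep generates no new transferred trace, hence all subdomain solves in every subsequent sweep vanish and the extra round leaves $u_{\text{DDM}}$ unchanged.

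\textbf{Case 2: $j_0>j_L$ (the source subdomain lies strictly above the interface row; this is the situation of Lemma \ref{lemma:2x2_down}).} Running the same region induction on the first round of four sweeps produces the incident field in all subdomains, the refracted field in all subdomains below $\gamma_L$, and the reflection in the subdomains of rows $1,\dots,j_L$, but leaves the reflection in rows $j_L+1,\dots,N_2$ missing --- exactly as in the $1\times N_2$ statement recalled before Theorem \ref{lemma:1xN}. This missing piece is an upward-propagating field emanating from row $j_L$: the downward incident field reaches $\gamma_L$ during the two sweeps with $d_2=-1$, generating on the interfaces bordering row $j_L$ trace data that, by Rules \ref{rule2d_a}--\ref{rule2d_b}, is deferred to a sweep of matching upward direction, which only occurs in the next round. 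The second round of four sweeps therefore consumes precisely these deferred traces, and the same region induction --- now for a constant-medium propagation problem in the slab above $\gamma_L$, since the missing field lives entirely on one side of the interface, and driven by \eqref{eq:2x2_u21_final} and its generalization --- reconstructs the reflection in rows $j_L+1,\dots,N_2$, after which the eighth-sweep DDM solution equals $u$ on all of $\Omega$; a ninth sweep (a third round) again generates no new trace.

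The main obstacle I expect is the bookkeeping inside the region induction: one must track, simultaneously across all four sweeps per round and all $\Nbx+\Nby-1$ steps, which trace on which interface carries the incident versus the reflected versus the refracted component, verify that Rules \ref{rule2d_a}--\ref{rule2d_b} neither suppress a needed trace nor admit a spurious one, and phrase the inductive hypothesis strongly enough --- as an identity between partial DDM solutions and regional solutions on unions of subdomains, with the above-interface reflection tagged as arriving one round late --- so that the $2\times 2$ identities \eqref{eq:2x2_u22_final} and \eqref{eq:2x2_u21_final} close the induction. Everything else is a routine combination of Lemmas \ref{lemma:trace} and \ref{lemma:change_domain} with the strip result Theorem \ref{lemma:1xN}, together with the harmlessness-of-the-extra-round check already used in Lemma \ref{lemma:2x2_up}.
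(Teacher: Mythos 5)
Your proposal follows essentially the same route as the paper: the paper's own justification of this theorem is only a brief remark that the general $N_1\times N_2$ case follows by combining the single-source region-induction argument of Lemma \ref{lemma:single_src} with the $2\times 2$ two-layered identities \eqref{eq:2x2_u22_final} and \eqref{eq:2x2_u21_final} and the $1\times N_2$ strip result, which is precisely the reduction (linearity, then induction over unions of subdomains, tracking incident/reflected/refracted components and the one-round delay of the above-interface reflection) that you lay out, in rather more detail than the paper itself. The one point to treat with care is your appeal to reflection invariance to reduce the vertical-interface case to the horizontal one, since the fixed sweep order \eqref{Sorder2D} is not symmetric under coordinate reflections; this does not affect the conclusion (at most one extra round), which is exactly what Lemma \ref{lemma:2x2_left_right} records.
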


\section{Extension to the Helmholtz problem in $\R^3$}


We now consider the extension the trace transfer-based diagonal sweeping DDM to the Helmholtz  problem \eqref{eq:helm}-\eqref{eq:radiation} in $\R^3$. The PML equation in $\R^3$ associated with  the cuboidal box 
$\B = \{\bx=(x_1,x_2,x_3) \;|\; a_j \le x_j \le b_j, j = 1,2,3\}$ is defined as
\begin{equation} \label{eq:PML3}
J_{\B}^{-1} \nabla \cdot (A_{\B} \nabla\tilde{u}) + \kappa^2 \tilde{u} = f, \qquad \text{in} \,\, \R^3,
\end{equation}
where 
$$\DD A_{\B}(\bx) = \mbox{diag}\left(\frac{\a_2(x_2)\a_3(x_3)}{\a_1(x_1)}\right. , \frac{\a_1(x_1)\a_3(x_3)}{\a_2(x_2)}, \left. \frac{\a_1(x_1)\a_2(x_2)}{\a_3(x_3)}\right),\quad J_{\B}(\bx) = \a_1(x_1) \a_2(x_2)\a_3(x_3),$$ 
with $\alpha_3(x_3) = 1 + \ii \sigma_3(x_3) $ 
and $\sigma_3(x_3)$ being defined  in the same way as \eqref{eq:sigma}.

With the $z$-direction partition as $\zeta_k = -l_3 + (k-1) \Delta\zeta$ for $k = 1, \ldots, \Nbz+1 $, where $\Delta\zeta = 2 \l_3 / \Nbz$, the cuboidal domain $\Omega = [-l_1,l_1]\times[-l_2,l_2]\times[-l_3,l_3]$ in $\R^3$ is partitioned into $\Nbx \times \Nby \times \Nbz$ nonoverlapping subdomains $\Omega_{i,j,k}$ and the source $f$ is decomposed into $f_{i,j,k} = f\cdot \chi_{\Omega_{i,j,k}}$, $i = 1, 2, \ldots, \Nbx$, $ j = 1, 2, \ldots, \Nby$, $k = 1, 2, \ldots, \Nbz$.
%
%
%
%
Let the one-dimensional cutoff functions in the $z$-direction be defined as
\begin{align*}
\mu^{(3)}_{\ocircle, k}(x_1) &= \left\{
\begin{array}{ll}
\DD H( x_3 - \zeta_k),  & \, \ocircle = -1\,\text{and}\,k\ne1,\\
\DD H(\zeta_{k+1} - x_3 ),  & \, \ocircle = 1\,\text{and}\,k\ne N_3,\\
  1,                         & \, \text{otherwise}, 
\end{array}
\right. 
\end{align*}
and the cutoff function for the  subdomain $\Omega_{i,j,k}$ is given by
\begin{align}
\betaH_{i, j, k} =  \betaH^{(1)}_{-1, i}(x_1) \betaH^{(1)}_{+1, i}(x_1) \betaH^{(2)}_{-1, j}(x_2)  \betaH^{(2)}_{+1, j}(x_2)  \betaH^{(3)}_{-1, k}(x_3)  \betaH^{(3)}_{+1, k}(x_3). 
\end{align}

Denote by $G_{i,j,k}(\bx, \by)$ the fundamental solution of the problem $\P_{\Omega_{i,j,k}, \kappa_{i,j,k}}$,
where $\kappa_{i,j,k}$ is the extension of the wave number $\kappa$ as done in \eqref{eq:wave_number},
and denote the boundaries 
of $\Omega_{i,j,k}$ as
\begin{align*}
\gamma_{\Box,\vartriangle,\ocircle; i, j, k} &= \left\{ 
\begin{array}{ll}
\{(x_1,x_2,x_3)\;|\; x_1 = \xi_i \minusd\},  & \, \text{if} \,\,  (\Box, \vartriangle, \ocircle) = (-1,0,0) \,\,\text{and}\,\, i > 1, \\
\{(x_1,x_2,x_3)\;|\; x_1 = \xi_{i+1} \plusd\},  & \, \text{if} \,\,  (\Box, \vartriangle, \ocircle) = (+1,0,0) \,\,\text{and}\,\, i < \Nbx,\\
\{(x_1,x_2,x_3)\;|\; x_2 = \eta_j \minusd\},  & \, \text{if} \,\, (\Box, \vartriangle, \ocircle) = (0,-1,0) \,\,\text{and}\,\, j > 1,\\
\{(x_1,x_2,x_3)\;|\; x_2 = \eta_{j+1} \plusd\},  & \, \text{if} \,\, (\Box, \vartriangle, \ocircle) = (0,+1,0)  \,\,\text{and}\,\, j < \Nby, \\
\{(x_1,x_2,x_3)\;|\; x_3 = \eta_k \minusd\},  & \, \text{if} \,\, (\Box, \vartriangle, \ocircle) = (0,0,-1) \,\,\text{and}\,\, k > 1,\\
\{(x_1,x_2,x_3)\;|\; x_3 = \eta_{k+1} \plusd\},  & \, \text{if} \,\, (\Box, \vartriangle, \ocircle) = (0,0,+1)  \,\,\text{and}\,\, k < \Nbz,  \\
\varnothing , &   \text{otherwise},           
\end{array}
\right. 
\end{align*}
and the corresponding unit normal vectors associated with them are then $\n_{\Box,\vartriangle,\ocircle} = -(\Box, \vartriangle, \ocircle)$. 
Using the above fundamental solutions, we can define the potential operators  as follows:
\begin{align}
  \Potential_{\Box, \vartriangle, \ocircle;\, i, j, k} (v) :=
  \Phi_{\Box, \vartriangle, \ocircle;\, i, j, k} (  (v, A_{\Omega_{i,j,k}} \nabla v \cdot \n_{\Box,\vartriangle,\ocircle} )^T   ),
\end{align}
where
\begin{align*}
  \Phi_{\Box, \vartriangle, \ocircle;\, i, j, k} (  (v, w)^T   ) :=
  &
    \DD  \int_{\gamma_{\Box, \vartriangle, \ocircle;\, i, j, k}}
    J^{-1}_{\Omega_{i,j,k}} G_{i,j,k}(\bx, \by) w(\by) \nonumber\\  
  &\qquad\qquad
    - v(\by) \Big( A_{\Omega_{i,j,k}} \nabla_y \left(J^{-1}_{\Omega_{i,j,k}} G_{i,j,k}(\bx, \by)\right) \cdot \n_{\Box,\vartriangle,\ocircle}   \Big) d\by,
\end{align*}
for  $(\Box, \vartriangle, \ocircle) = (\pm 1, 0,0), (0, \pm 1, 0), (0, 0, \pm 1)$.

Eight diagonal directions, namely $(\dirThreeD)$, $\dirThreeD = \pm 1$, are used in diagonal sweeping in $\R^3$, and the sweep along each  direction contains a total of $N_1+N_2+N_3-2$ steps.  The sweeping order is set as
\begin{align} \label{Sorder3D}
\begin{array}{llll}
(+1,+1,+1),& (-1,+1,+1),& (+1,-1,+1),& (-1,-1,+1), \\
(+1,+1,-1),& (-1,+1,-1),& (+1,-1,-1),& (-1,-1,-1).
\end{array}
\end{align}
In the $s$-th step of the sweep of direction $(d_1, d_2, d_3)$, the group of subdomains $\{ \Omega_{i,j,k} \}$ satisfying
   \begin{equation}
     \label{eq:step_subd3}    
     \widehat{i}(i)+\widehat{j}(j) +\widehat{k}(k) +1 = s
   \end{equation}
    are to be solved, where $\widehat{i}(i)$ and $\widehat{j}(j)$ are defined by \eqref{eq:step_subd_ij} and $\widehat{k}(k)$ is defined as
\begin{align} \label{eq:step_subd_k}     
\widehat{k}(k) &= \left\{
\begin{array}{ll}
k-1,  &\,\,\, \text{if} \,\,\,\, d_3 = 1,\\
\Nbz-k, & \,\,\, \text{if} \,\,\,\, d_3 = -1.
\end{array}
\right. 
\end{align}
The similar direction of vectors in $\R^3$ is defined as in \cite{Leng2020}:
two vector $\d_1$ and $\d_2$ in $\R^3$ are called in the {\em similar
        direction} 
if $\d_1 \cdot \d_2 > 0$ and $\d_1(k) \, \d_2(k) \geq 0$ for
$k = 1,2,3$, where $\d_1(k)$ and $\d_2(k)$ are the $k$-th components of
$\d_1$ and $\d_2$, respectively. 
The rules on the transferred traces  for sweeps in $\R^3$ are also as those in \cite{Leng2020}:   
\begin{rulee}{\rm (Similar directions in $\R^3$)} \label{rule3d_a}
        A transferred source  which is not in the similar direction of one sweep in $\R^3$, should not be used in that sweep.
\end{rulee}
\begin{rulee}{\rm (Opposite directions in $\R^3$)}      \label{rule3d_b}
        Suppose a transferred source with direction $\d_{\text{src}}$ is 
        generated in one sweep with direction $\d_1$, then it should not be used in the later sweep with direction $\d_2$, if under any of $x-y$, $x-z$,
        $y-z$ plane projections, the projection of $\d_{\text{src}}$ has exactly one zero components and the projections of $\d_1$ and $\d_2$ are opposite.
\end{rulee}

The trace transfer-based diagonal sweeping DDM  in $\R^3$ is then stated as follows.

\alglongline
\begin{algorithm_}[Trace transfer-based diagonal sweeping DDM  in $\R^3$]
        $\quad$  
        \label{alg:diag3D_t}
        \begin{algorithmic}[1]
                \parState {Set the sweep order as list (\ref{Sorder3D}) } 
                \parState {Set the local trace of each subdomain for each sweep as  $\mathbf{g}^{l}_{\Box,\vartriangle,\ocircle; i, j, k}$, $l= 1,\ldots,8$ }
                \For{Sweep $l = 1,\ldots,8$} 
                \For{Step $s = 1, \ldots, \Nbx+\Nby+\Nbz-2$ } 
                \For {each of the subdomains $\{\Omega_{i,j,k}\}$ defined by \eqref{eq:step_subd3} in Step $s$ of Sweep $l$ }
                \parState {If $f_{i,j,k} \neq 0$, solve the local solution 
                        $\L_{\Omega_{i,j,k},\kappa_{i,j,k}} (u_{i, j, k}^{l}) =  f_{i,j, k}$,\\
                else set $u_{i, j, k}^{l} = 0$}
                \State Set $f_{i,j, k} = 0$
                \State Add potentials to the local solution
                \begin{eqnarray*}
                u_{i, j, k}^{l} \gets u_{i, j, k}^{l} \sum\limits_{\substack{(\Box,\vartriangle,\ocircle) = (\pm 1, 0, 0), \\(0, \pm 1, 0), (0, 0, \pm 1)} }  \Potential_{\Box, \vartriangle, \ocircle;\, i, j, k} (\mathbf{g}^{l}_{\Box,\vartriangle,\ocircle; i, j, k})
                \end{eqnarray*}
                \parState {Compute new transferred trace 
                        $ \left( u_{i, j, k}^{l}, A_{i,j,k} \nabla u_{i, j, k}^{l} \cdot \n_{\Box,\vartriangle,\ocircle} \right)^T \big|_{\gamma_{\Box,\vartriangle,\ocircle; i, j, k}}$,
                        $(\Box,\vartriangle,\ocircle) = (\pm 1, 0, 0)$, $(0, \pm 1, 0)$, $(0, 0, \pm 1)$} 
                      \For {each direction $(\Box,\vartriangle,\ocircle) = (\pm 1, 0, 0)$, $(0, \pm 1, 0)$, $(0, 0, \pm 1)$}
                      
                \parState{Find the smallest sweep number $l' \geq l$, such that the transferred trace could be used in Sweep $l'$, according to Rules \ref{rule3d_a} and \ref{rule3d_b} } 
                \parState {Add the transferred trace to the $l'$-th trace of the corresponding  neighbor subdomain}
                \begin{align*}
                  \qquad\qquad\qquad\mathbf{g}^{l'}_{-\Box,-\vartriangle,-\ocircle; i+\Box, j+\vartriangle, k+\ocircle}
                  \gets& \mathbf{g}^{l'}_{-\Box,-\vartriangle,-\ocircle; i+\Box, j+\vartriangle, k+\ocircle} \\
                  & +\left( u_{i, j, k}^{l}, A_{\Omega_{i,j,k}} \nabla u_{i, j, k}^{l} \cdot \n_{\Box,\vartriangle,\ocircle}  \right)^T \Big|_{\gamma_{\Box,\vartriangle,\ocircle; i, j, k}}
                \end{align*}
                \EndFor
                \EndFor
                \EndFor
                \EndFor
                \State The DDM solution for  $\P_{\Omega,\kappa}$ with the source $f$ is then given by
                $$ u_{\text{DDM}} = \sum\limits_{l = 1, \ldots, 8 } 
                \sum\limits_{\substack{\rangeThreeDRow}} u_{i,j,k}^{l}  \mu_{i,j,k}  $$
                
        \end{algorithmic}
\end{algorithm_}
\alglongline   
\vspace{0.2cm}

The DDM solution of the above Algorithm \ref{alg:diag3D_t} is exactly the solution to $\P_{\Omega,\kappa}$ with the source $f$, not only in the constant medium case, but also in the two-layered media case with one extra round of eight diagonal sweeps.  The verification in the constant medium case is similar to the source transfer method discussed in \cite{Leng2020}, in fact, it is even simpler since only cardinal directions need to be considered for  transferred traces. 
The verification of the two-layered media case basically follows the same line of argument used in the  $\R^2$ case in the preceding section
except the media interface now becomes a plane in $\R^3$. 

\begin{rem}

  The diagonal sweeping DDM is a very efficient method in terms of computational complexity and weak scalability.
  When we increase the number of subdomains while fixing the subdomain problem size,  the computational complexity of solving one RHS 
  Helmholtz system by using a Krylov subspace solver with the diagonal sweeping DDM  as the preconditioner is $\mathcal{O}(N n_{\text{iter}})$, where $N$ is the size of global system and $n_{\text{iter}}$ is the number of iterations used by the Krylov subspace solver.
If $n_{\text{iter}}$ grows proportionally to $\log N$, which is to be verified in the numerical experiments section, then the method is of $\mathcal{O}(N \log N)$ complexity.
For problems with many RHSs,  a pipeline can be set up to solve all the RHSs in sequence as discussed  in \cite{Leng2020},
where the number of used processors in the pipeline is chosen to be equal to the number of subdomains.
Take the 3D case for illustration, assuming that the number of RHSs, denoted $N_{\text{RHS}}$, is a multiple of
$\Nbx+\Nby+\Nbz-2$, then the average solving time for one RHS in the method is about
\begin{equation}
  8 n_{\text{iter}} T_0 + \frac{\Nbx+\Nby+\Nbz-2}{N_{\text{RHS}}}  T_0,
\end{equation}
where $T_0$ is the time cost of one subdomain solve. Thus, the pipeline is very efficient for this case in the sense that the idle of the cores in the pipeline processing only wastes a tiny amount (i.e., $\frac{N_1+N_2+N_3-2}{8 n_{\text{iter}}N_{\text{RHS}}}$) of the total computing resource.

\end{rem}

\section{Numerical experiments}

Performance evaluation of the trace transfer-based diagonal sweeping DDM (Algorithms \ref{alg:diag2D_t}  and \ref{alg:diag3D_t}) is presented in this section. 
The finite difference  (FD) discretization on structured meshes is adopted for spatial discretization. More specifically, the five points stencil and the seven points one are employed for two and three dimensional problems respectively, both of them are of second order accurate. 
The accuracy of the FD scheme is ensured by the mesh density, which is defined as the number of nodes per wave length.
The proposed diagonal sweeping DDM algorithms are implemented in parallel using Message Passing Interface (MPI), and the spare direct solver ``MUMPS'' \cite{MUMPS} is used to solve the local subdomain problems. 
The numerical tests are all carried out on the  ``LSSC-IV'' cluster at the State Key Laboratory of Scientific and Engineering Computing, Chinese Academy of Sciences, each computing node of the cluster has two 2.3GHz Xeon Gold 6140 processors with 18 cores and 192G shared memory. The number of used cores is always chosen to be equal to the number of subdomains. 

First, the convergence of the fully discrete DDM solution is tested in the following sense. The total error of a numerical solution usually consists of the errors from the discretization, the truncation of the PML layer and the DDM algorithm. However, in the constant medium case and the two-layered media case (with an extra round of diagonal sweeps), there is no error from the proposed DDM itself as shown by convergence analysis. Therefore, using appropriate PML medium parameters to keep the PML truncation error relatively small (which is easy since the truncation error decays exponentially away from the computational domain), the total error will be dominated by the error of numerical discretization and then can be identified by the convergence order.
Second, the performance and parallel scalability of the proposed DDM as the preconditioner is also tested.  The DDM solution is an approximate solution to the problem, hence it could be used as a good preconditioner for Krylov subspace solvers (such as GMRES), and the effectiveness of the DDM preconditioner is
reflected  by the number of  iterations needed to reach certain stopping criterion. All running times are measured in seconds.

\subsection{Convergence tests of the discrete DDM solutions}

In this subsection, the convergence of the proposed method is tested in the constant medium case and the two-layered media case.  The error of the DDM solution is expected to be dominated by the spatial discretization error, hence we fix both the wave number and the number of subdomains, and increase the mesh resolution to check the convergence order.

\subsubsection{2D constant medium problem}

A 2D constant medium problem  is used to test Algorithm \ref{alg:diag2D_t}.  The computational domain is $B_L=[0,2L]\times [0,2L]$ with $L = 1/2$, and the interior domain without PML is $B_l=[L-l,L+l]\times [L-l,L+l]$ with $l = 25/56$.
For the wave number $\kappa = 50 \pi $,  a series of uniformly refined meshes are used,   of which the mesh density increases approximately from $11$ to $173$ . 
The domain partition is fixed to be $4 \times 4$  for all the meshes, and the source is defined as
\begin{equation} \label{eq:smooth_src_2d}
f(x_1, x_2) = \frac{16 \kappa^2}{\pi^3}  e^{- (\frac{4 \kappa}{\pi})^2 ((x_1-r_1)^2+(x_2-r_2)^2) },
\end{equation}
where $(r_1, r_2) = (0.28, 0.27) $, such that $\supp(f)$ extends to all four subdomains $\Omega_{i,j}$, $i,j=1,2$.
 The optimal second-order convergence of the errors in $L^2$ and $H^1$ norms along the mesh refinement are observed, as is reported in Table \ref{tab:const2D}, which implies the total error is indeed dominated by the finite difference discretization error in the constant medium case.
%

\begin{table}[ht!]
        \centering
        \begin{tabular}{|c|c|r|c|r|c|}
                \hline
                Mesh     & Local Size  & \mrowcol{$L^2$ Error} & Conv. & \mrowcol{$H^1$  Error} & Conv. \\
                Size     & without PML &                       & Rate  &                         & Rate  \\
                \hline\hline
                270$^2$  & 60$^2$      & 1.38$\times 10^{-2}$  &    -   & 2.13$\times 10^{0}\,\,\,\,$     &  -     \\
                540$^2$  & 120$^2$     & 3.43$\times 10^{-3}$  & 2.0   & 5.35$\times 10^{-1}$    & 2.0      \\
                1080$^2$ & 240$^2$     & 8.51$\times 10^{-4}$  & 2.0   & 1.33$\times 10^{-1}$    & 2.0   \\
                2160$^2$ & 480$^2$     & 2.12$\times 10^{-4}$  & 2.0   & 3.33$\times 10^{-2}$    & 2.0   \\
                4320$^2$ & 960$^2$     & 5.30$\times 10^{-5}$  & 2.0   & 8.32$\times 10^{-3}$    & 2.0   \\
                \hline
        \end{tabular}
        \caption{The errors and convergence rates of numerical solutions by the   trace transfer-based diagonal sweeping  DDM for the constant medium problem in $\R^2$.} \label{tab:const2D}
\end{table}

\subsubsection{2D  two-layered media problem}

Next we test Algorithm \ref{alg:diag2D_t} with a two-layered problem in
$\R^2$, the problem settings are the same as the first example, except that the
wave number is changed to two-layered as follows:
\begin{align*}
  \kappa = \left\{
  \begin{array}{ll}
    \kup, & x_2 > \eta_L + \eps, \\
    \kdown (1-w)  + \kup w, & \eta_L \leq x_2 \leq \eta_L +\eps,  \\
    \kdown,  &   x_2 < \eta_L,
  \end{array}
  \right.
\end{align*}
where $\kdown = 50 \pi $, $\kup = 62.5 \pi$, $\eta_L = 0.65$ and $\eps = 0.01$, here a thin layer of width $\eps$ is used to make a smooth transition between the two media, as shown in Figure \ref{fig:two-layered-sol}-(left).
The same single source as \eqref{eq:smooth_src_2d} is used to test  the original algorithm, and additionally the combination of four symmetric sources, defined as 
\begin{equation*}
{ \tilde f}(x_1, x_2) = f(x_1, x_2) + f(1-x_1, x_2) + f(x_1, 1-x_2) + f(1-x_1, 1-x_2),
\end{equation*}
where $f(x_1, x_2)$ is defined as \eqref{eq:smooth_src_2d}, is used to test the algorithm with extra four diagonal sweeps.
The domain partition is again fixed to be $4 \times 4$  for all the meshes. Note that the DDM solution  with the $2\times2$ domain partition is still guaranteed to be the exact solution as analyzed in Section 3.2.
The real part of the discrete DDM solution on the mesh of 1080$^2$ is shown in Figure \ref{fig:two-layered-sol}-(middle)\&(right), the optimal second-order convergence of the errors in $L^2$ and $H^1$ norms along the mesh refinement are again obtained, as shown in Tables \ref{tab:two-layered} and \ref{tab:two-layered4}, which implies the total error is still  dominated by the  finite difference discretization error in the two-layered media case.
%

\def\wdff{0.32}
\begin{figure*}[!ht]    
  \centering
  \begin{minipage}[t]{\wdff\linewidth}
    \centering
    \includegraphics[width=1.05\textwidth]{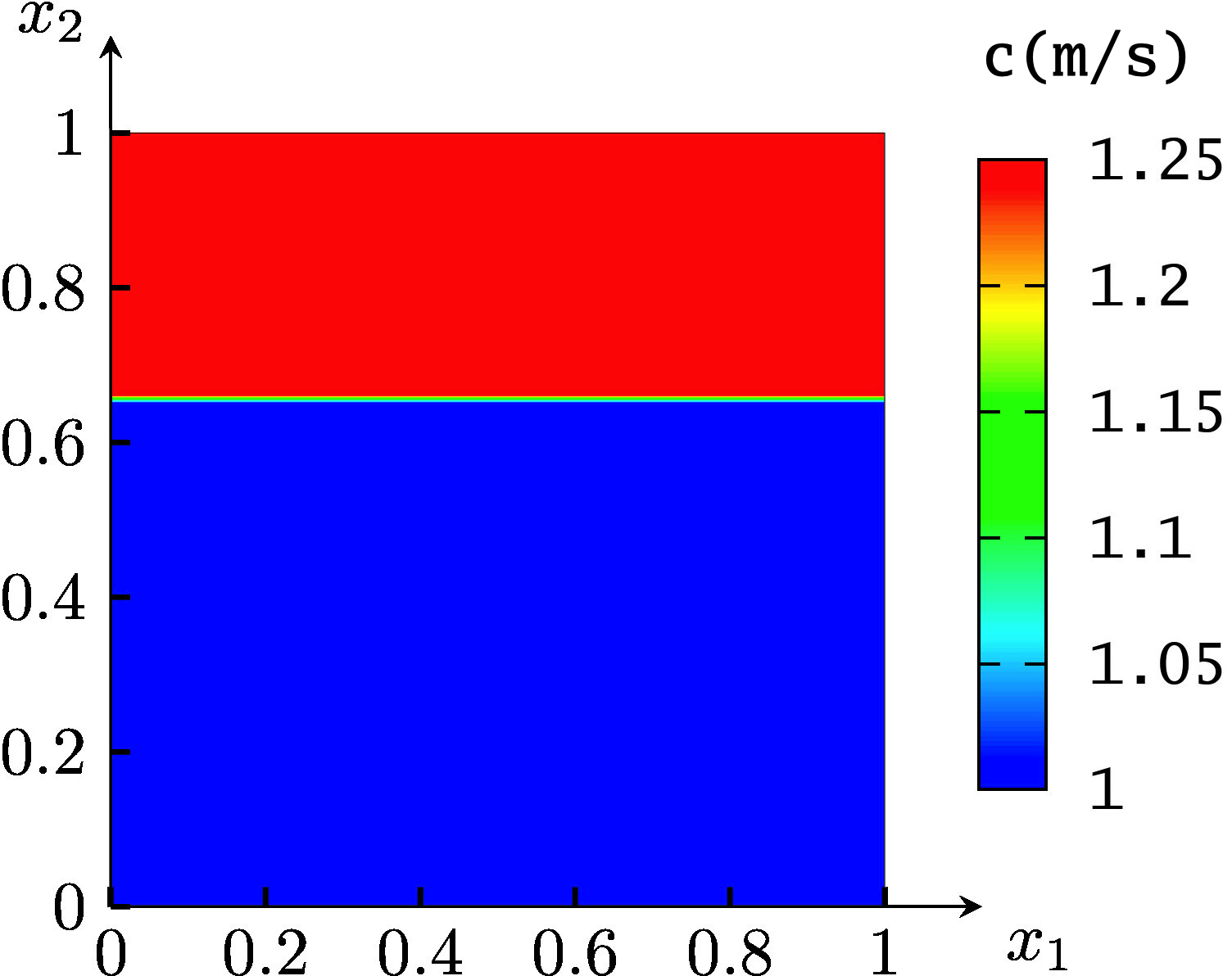}
  \end{minipage}\hspace{0.3cm}
    \begin{minipage}[t]{\wdff\linewidth}
    \centering
    \includegraphics[width=0.98\textwidth]{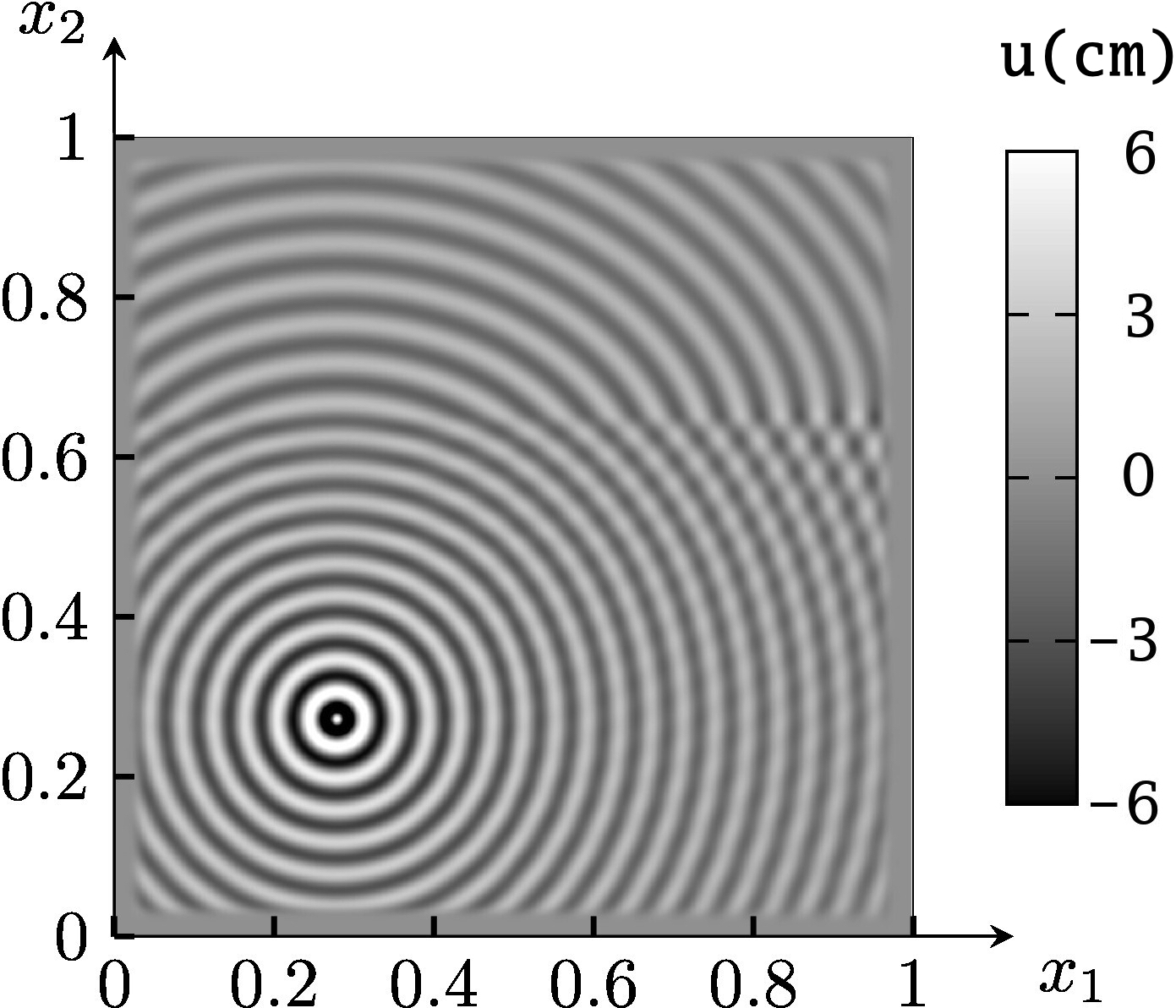}
  \end{minipage}\hspace{-0.1cm}
  \begin{minipage}[t]{\wdff\linewidth}
    \centering
    \includegraphics[width=0.98\textwidth]{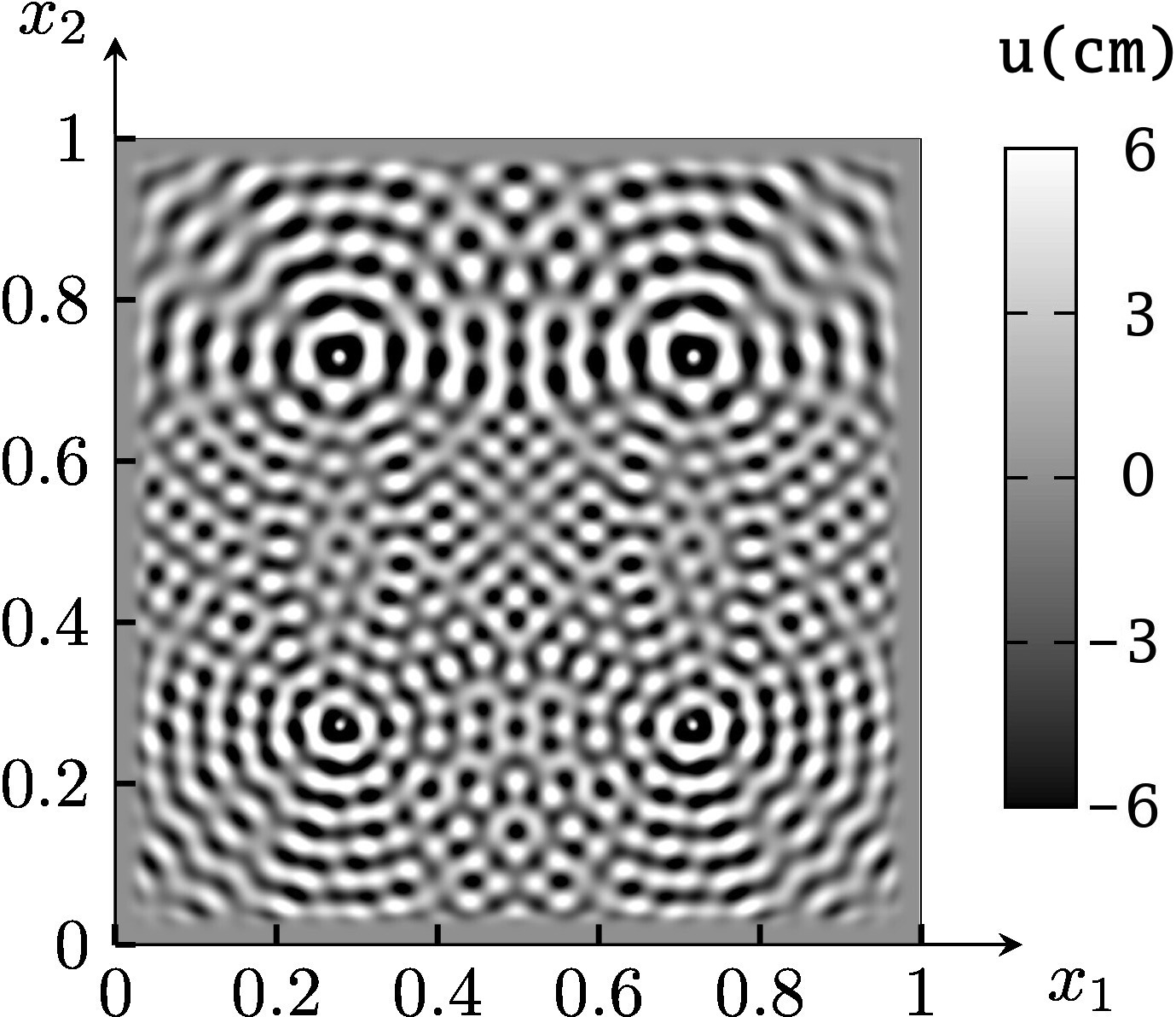}
  \end{minipage}
  \caption{Velocity profile of the 2D two-layered media (left) and the real part of the DDM solution with one source (middle) and four sources (right), respectively. \label{fig:two-layered-sol} }
\end{figure*}

\begin{table}[ht!]
        \centering
        \begin{tabular}{|c|c|r|c|r|c|}
                \hline
                Mesh     & Local Size  & \mrow{$L^2$ Error}   & Conv. & \mrowcol{$H^1$  Error} & Conv. \\
                Size     & without PML &                      & Rate  &                        & Rate  \\
                \hline\hline
                270$^2$  & 60$^2$      & 1.37$\times 10^{-2}$ &   -    & 1.97$\times 10^{0}\,\,\,\,$    &   -    \\
                540$^2$  & 120$^2$     & 3.39$\times 10^{-3}$ & 2.0   & 4.91$\times 10^{-1}$   & 2.0     \\
                1080$^2$ & 240$^2$     & 8.41$\times 10^{-4}$ & 2.0   & 1.22$\times 10^{-1}$   & 2.0   \\
                2160$^2$ & 480$^2$     & 2.10$\times 10^{-4}$ & 2.0   & 3.05$\times 10^{-2}$   & 2.0   \\
                4320$^2$ & 960$^2$     & 5.25$\times 10^{-5}$ & 2.0   & 7.63$\times 10^{-3}$   & 2.0   \\
                \hline
        \end{tabular}
        \caption{The errors and convergence rates of numerical solutions by the trace transfer-based diagonal sweeping  DDM  for the two-layered media problem in $\R^2$ with one source.} \label{tab:two-layered}
\end{table}

\begin{table}[ht!]
        \centering
        \begin{tabular}{|c|c|r|c|r|c|}
                \hline
                Mesh     & Local Size  & \mrow{$L^2$ Error}   & Conv. & \mrowcol{$H^1$  Error} & Conv. \\
                Size     & without PML &                      & Rate  &                        & Rate  \\
                \hline\hline
                270$^2$  & 60$^2$      & 2.65$\times 10^{-2}$ &   -    & 3.89$\times 10^{0}\,\,\,\,$    &   -    \\
                540$^2$  & 120$^2$     & 6.66$\times 10^{-3}$ & 2.0   & 9.89$\times 10^{-1}$   & 2.0     \\
                1080$^2$ & 240$^2$     & 1.65$\times 10^{-4}$ & 2.0   & 2.46$\times 10^{-1}$   & 2.0   \\
                2160$^2$ & 480$^2$     & 4.11$\times 10^{-4}$ & 2.0   & 6.12$\times 10^{-2}$   & 2.0   \\
                4320$^2$ & 960$^2$     & 1.03$\times 10^{-4}$ & 2.0   & 1.53$\times 10^{-2}$   & 2.0   \\
                \hline
        \end{tabular}
        \caption{The errors and convergence rates of numerical solutions by the trace transfer-based diagonal sweeping  DDM  for the two-layered media problem in $\R^2$ with four sources.} \label{tab:two-layered4}
\end{table}

\subsubsection{3D  constant medium problem}

Next a 3D constant medium problem  is used to test  Algorithm \ref{alg:diag3D_t}.  The computational domain is $B_L=[-L,L]^3$ with $L = 1/2$, and the interior domain without PML is $B_l=[-l,l]^3$ with $l = 3/8$.
For the wave number $\kappa / 2\pi = 11$,  a series of uniformly refined meshes are used,   of which the mesh density increases approximately from $7$ to $14$ . 
The domain partition is fixed to be $3 \times 3 \times 3$  for all the meshes, and the source is defined as, 
\begin{equation*}
f(x_1, x_2, x_3) = \frac{64 \kappa^3}{\pi^{9/2}}  e^{- (\frac{4 \kappa}{\pi})^2 ((x_1-r_1)^2+(x_2-r_2)^2+(x_3-r_3)^2) },
\end{equation*}
where $(r_1, r_2, r_3) = (0.12, 0.13, 0.14) $.
%
%
%
The optimal second-order convergence of the errors in $L^2$ and $H^1$ norms along the mesh refinement are again obtained, as is reported in Table \ref{tab:const3D}. 
%

\begin{table}[ht!]
        \centering
        \begin{tabular}{|c|c|r|c|r|c|}
                \hline
                Mesh    & Local Size  & \mrowcol{$L^2$ Error} & Conv. & \mrowcol{$H^1$  Error}       & Conv. \\
                Size    & without PML &                       & Rate  &                              & Rate  \\
                \hline\hline
                80$^3$  & 20$^2$      & 3.67$\times 10^{-2}$  &  -     & 2.43$\times 10^{0}\,\,\,\,$  &  -     \\
                96$^3$  & 24$^2$      & 2.49$\times 10^{-2}$  & 2.1   & 1.67$\times 10^{0}\,\,\,\,$  & 2.0   \\
                128$^3$ & 32$^2$      & 1.37$\times 10^{-2}$  & 2.1   & 9.33$\times 10^{-1}\,\,\,\,$ & 2.0   \\
                160$^3$ & 40$^2$      & 8.76$\times 10^{-3}$  & 2.0   & 5.99$\times 10^{-2}\,\,\,\,$ & 2.0   \\
                \hline
        \end{tabular}
        \caption{The errors and convergence rates of  numerical solutions by the trace transfer-based diagonal sweeping  DDM  for the constant medium problem in $\R^3$.} \label{tab:const3D}
\end{table}

\subsection{Performance tests with the DDM as the preconditioner} 

\def\Tave{{T}_{\text{par}}}

The trace transfer-based diagonal sweeping DDM  (Algorithms \ref{alg:diag2D_t} and \ref{alg:diag3D_t}) are now tested as the  preconditioner for the GMRES solver of  the global discretized system of the Helmholtz problem, 
each preconditioner solving consists four diagonal sweeps and $\Nbx + \Nby -1$ sequential steps per sweep for 2D problem,
or eight diagonal sweeps and $\Nbx + \Nby + \Nbz -2 $ sequential steps per sweep in 3D problem.
In all the tests, the stopping criterion is that the relative residual is less than 10$^{-6}$. 
The effectiveness and  efficiency of the proposed method are evaluated  by repeatedly increasing the number of subdomains and frequencies while the subdomain problem size and mesh density remain fixed.  The performance of the proposed method is also compared to that of the diagonal sweeping DDM with source transfer \cite{Leng2020}. In addition, we demonstrate the scalability of the proposed method when 
combined with the pipeline processing through solving  multiple RHSs problems. 
%

\subsubsection{2D constant medium problem}

A constant medium problem on the 
square domain $[0,1]\times[0,1]$  is first used for testing and comparison between the proposed method 
and the diagonal sweeping DDM with source transfer.
The source contains four approximated delta sources, that is
\begin{equation} \label{eq:four_src}
\DD f(x_1, x_2) = \sum\limits_{\substack{i = 1, \ldots, N_x \\ j = 1, \ldots, N_y}} \sum\limits_{s=1,\ldots,4} \frac{1}{h_1 h_2}\delta(x_1^s - i h_1) \delta (x_2^s - j h_2 ),
\end{equation}
where $(x^s_1,x^s_2) = (1/4,1/4)$, $(1/4,3/4)$, $(3/4,1/4)$ and $(3/4,3/4)$, for $s=1,\ldots,4$, and $h_{1}$ and $h_{2}$ are the grid spacing in $x$ and $y$ directions, respectively.
The frequency and number of subdomains increase as the mesh resolution increases, while the size of the subdomain problems is fixed to be $300 \times 300$ and the mesh density is kept to be 10. The PML layer is of 30 points (i.e., 3 wave length).
The factorization time of the subdomains is around $2.3 \sim 2.4$ seconds for all partitions.  
The  performance results of diagonal sweeping DDMs with trace transfer or source transfer as the preconditioner  are reported and compared in Table \ref{tab:iter_const}.   It is observed that the number of needed GMRES iterative steps (denoted by $n_{\text{iter}}$) grows roughly proportionally to $\log(\Nbx + \Nby)$ or $\log(\omega)$ for both methods, which implies that  they  are highly efficient. In addition, 
 the method based on trace transfer does relatively better in efficiency than the one based on source transfer  as expected, i.e., the GMRES iteration times (denoted by $\Tsolve$) are  around 6.7\%--14.6\% smaller.
   
\begin{table}[ht!]
  \centering
\begin{tabular}{|r|r|r|c|c|}
	\hline
	
	\mcol{Mesh} & \mrow{$N_1 \times N_2$} & Freq.          & GMRES & \mrowcol{$\Tsolve$} \\
	\mcol{Size} &                       & $\omega/2\pi$    & $n_{\text{iter}}$ &           \\
	
	\hline
	\hline
  600$^2$  & 2 $\times$ 2   & 65.9 & 3 (3) & 29.3 (32.0) \\
  1200$^2$ & 4 $\times$ 4   & 126  & 3 (3) & 64.4 (73.8) \\
  2400$^2$ & 8 $\times$ 8   & 246  & 3 (3) & 123.5 (136.7) \\
  4800$^2$ & 16 $\times$ 16 & 486  & 3 (3) & 268.4 (285.9) \\
  9600$^2$ & 32 $\times$ 32 & 966  & 4 (4) & 715.3 (766.1) \\
	\hline
\end{tabular}

  \caption{Performance comparison of the  diagonal sweeping DDMs based on trace transfer vs. source transfer  (shown in brackets), as an iterative solver  for the 2D constant medium problem when the subdomain problem size is fixed.} \label{tab:iter_const}
\end{table}

\subsubsection{The BP-2004 model}

The performance of the proposed method is then tested with the 2D BP-2004 velocity model \cite{Billette2004}, which is a popular benchmark problem in seismic exploration to study reverse time migration.
The middle part of the model that contains the slat body is used, which is $24$ km in length and $12$ km in depth,
and the velocity range is $[1, 5]$ km/s, as shown in Figure \ref{fig:seg04_vel}.
The efficiency and parallel scalability of the proposed method with pipeline processing is tested with a multiple RHSs problem,
where a set of randomly distributed shots at a depth of one-fourth of subdomain height are used as sources, and the total number of shots is $N_{\text{RHS}} = 2 (\Nbx + \Nby - 1)$ for a $\Nbx \times \Nby $ domain partition.
A series of uniformly refined meshes are used, and the frequencies increase as the mesh refines, so that the minimum mesh density is kept to be 12.
The number of subdomain also increases as the mesh refines, while the size of subdomain is kept $180  \times 180$ without PML.
An approximate solution to the problem is presented in Figure \ref{fig:seg04_sol}, where the angular frequency is $\omega/2\pi = 14.6$ and the source is one of the random shots.
The factorization time of the subdomains is around $2.3 \sim 2.7$ seconds for all partitions.    
The performance results of diagonal sweeping DDMs with trace transfer or source  transfer as the preconditioner are reported in Table \ref{tab:iter_BP2004}.
It is easy to see that
the number of GMRES iterative steps still grows roughly  proportional to  $\log(\omega)$ and  so does the average solving time $\Tave := \frac{\Tsolve}{N_{\text{RHS}}}$. 
The weak scaling efficiency is around 86\%--87\% with 1024 processors, which shows that the method  with the pipeline processing is highly scalable.
The   trace transfer-based method  again does relatively better than the source transfer-based one  as expected, i.e., the GMRES iteration times are  around 2.6\%--4.1\% smaller.

\begin{figure}[!ht]
 \centerline{
   \includegraphics[width=.65\textwidth]{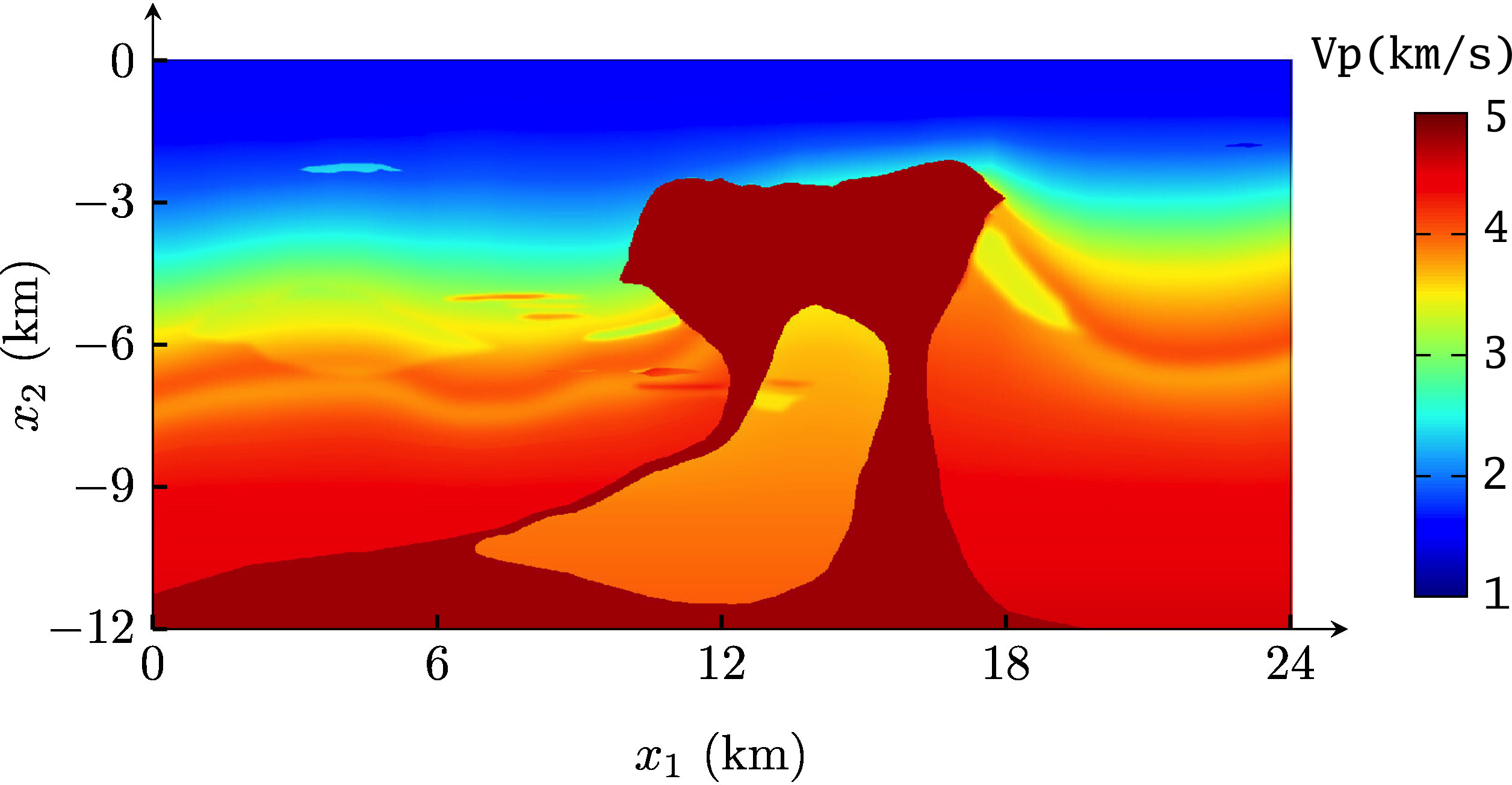} 
 }
 \vspace{-0.4cm}
 \caption{The velocity profile of the BP-2004 model.
 }
 \label{fig:seg04_vel}
\end{figure}

\begin{figure}[!ht]
 \centerline{
   \includegraphics[width=.65\textwidth]{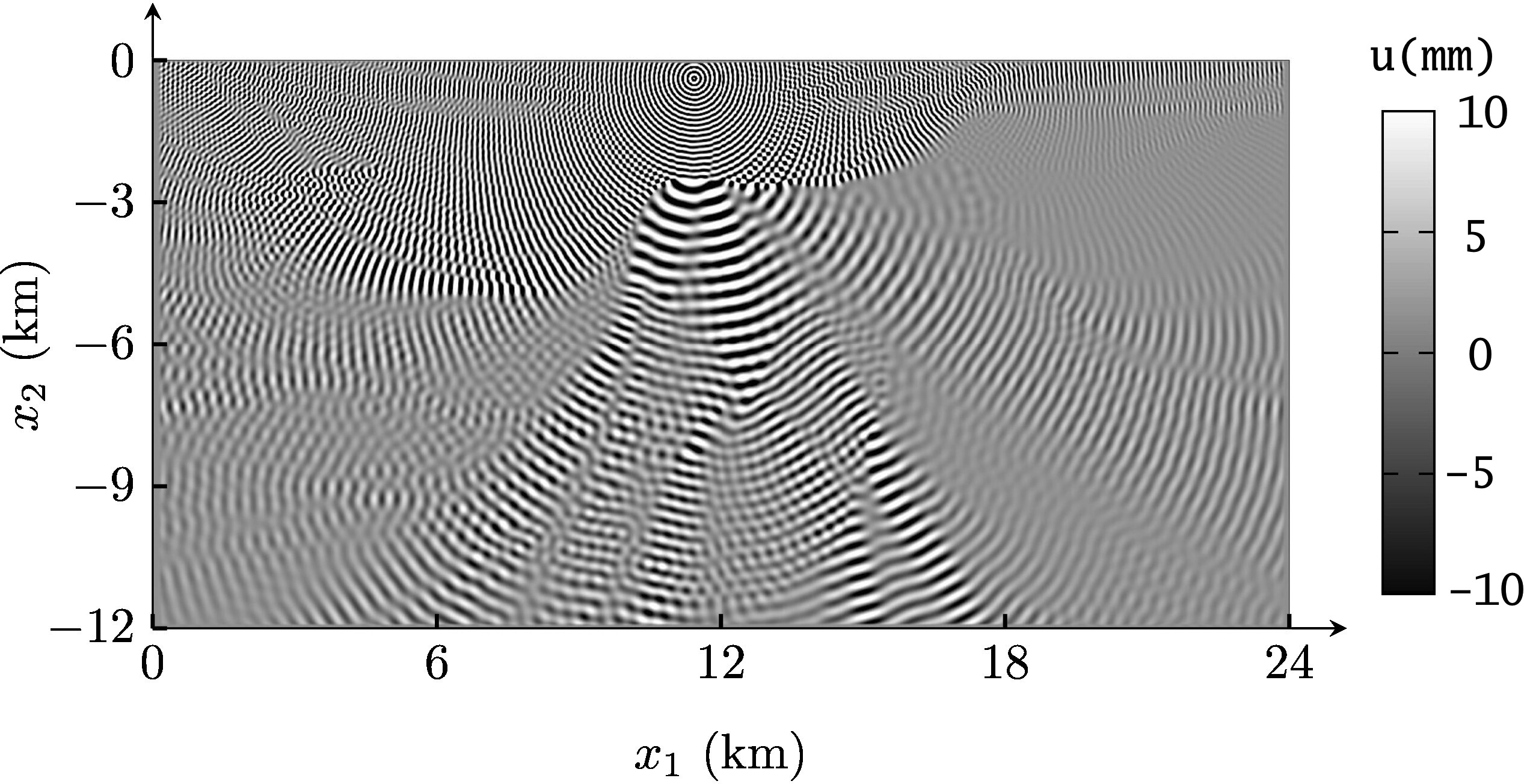} 
 }
 \vspace{-0.2cm}
 \caption{
 The real part of the approximate solution to the problem of the shot located at (11.399 km, -0.3675 km) with the angular frequency $\omega/2\pi = 14.6$ on the mesh of size $2880^2$  in the BP-2004 model.}
 \label{fig:seg04_sol}
\end{figure}

\begin{table}[ht!]
 \centering 
\begin{tabular}{|r|r|r|r|c|c|c|c|}
\hline
  \mcol{Mesh} & \mrow{$N_1 \times N_2$} & Freq.         & \mrowcol{$N_{\rm RHS}$} & GMRES & \mrowcol{$\Tsolve$} & \mrowcol{$\Tave$} & \mcol{Weak scaling}\\
  \mcol{Size} &                       & $\omega/2\pi$ &                     & $n_{\text{iter}}$ &                          &                            &  \mcol{efficiency}     \\

\hline
\hline
 720$^2$  & 4 $\times$ 4   & 3.87 & 14  & 6 (6) & 148.8 (153.1)   & 10.6 (10.9) & 100 \%(100\%)\\ 
 1440$^2$ & 8 $\times$ 8   & 7.44 & 30  & 6 (6) & 318.8 (328.2)  & 10.6 (10.9) & 100 \%(100\%)\\ 
 2880$^2$ & 16 $\times$ 16 & 14.6 & 62  & 7 (7) & 747.7 (774.7)   & 12.1 (12.5) & 88\% (88\%)\\ 
 5760$^2$ & 32 $\times$ 32 & 28.9 & 126 & 7 (7) & 1537.7 (1603.5) & 12.2 (12.7) & 87\% (86\%)\\ 
 \hline
\end{tabular}
\caption{The performance of the diagonal sweeping DDM based on trace transfer vs. source transfer  (shown in brackets)  as the preconditioner  for the BP-2004 problem when the subdomain problem size is fixed.
} \label{tab:iter_BP2004}
\end{table}

\subsubsection{3D layered media problem}
We now test a 3D layered media problem on the 
cuboidal domain $[0,1]\times[0,1]\times[0,1]$, in which the five-layered media is given as that in \cite{Leng2020}, see Figure \ref{fig:layer-3d_vel}-(left).
The efficiency and parallel scalability of the proposed method with pipeline processing is again tested with a multiple RHSs problem,
where a set of randomly distributed shots on the square $[0.15, 0.85]\times[0.15, 0.85]\times[0.85, 0.85]$ are tested as sources, and the total number of shots is $N_{\text{RHS}} = 2 (\Nbx + \Nby + \Nbz -2)$ for a $\Nbx \times \Nby \times \Nbz$ domain partition.
The frequencies and number of subdomains increase as the mesh resolution increase, while the mesh density is kept 9 and the size of the subdomain problems is fixed to be $32^3$. The PML layer is 10 points wide, which is approximately of 1 wave length. 
An approximate solution to the problem is presented in Figure  \ref{fig:layer-3d_vel}-(right), where the angular frequency is $\omega/2\pi = 25.5$ and the source is one of the random shots.
The factorization time of the subdomains is around $60.4 \sim 61.2$ seconds for all partitions.  
The performance results of the diagonal sweeping DDMs with  trace transfer or source transfer are reported in Table \ref{tab:iter_const_3D}. 
We observe that both the number of GMRES iterative steps and the average solving time $\Tave$ both grow roughly proportionally to $\log(\Nbx+\Nby+\Nbz)$ or $\log(\omega)$.
 The weak scaling efficiency is 63\% with 1000 cores, which shows that the proposed method is still well scalable. Furthermore the  trace transfer-based method  again does relatively better in efficiency  than the source transfer-based one on as expected, i.e., the GMRES iteration times are  around 1.2\%--3.1\% smaller.

\begin{figure}[!ht]
	\centerline{
          \includegraphics[width=.4\textwidth]{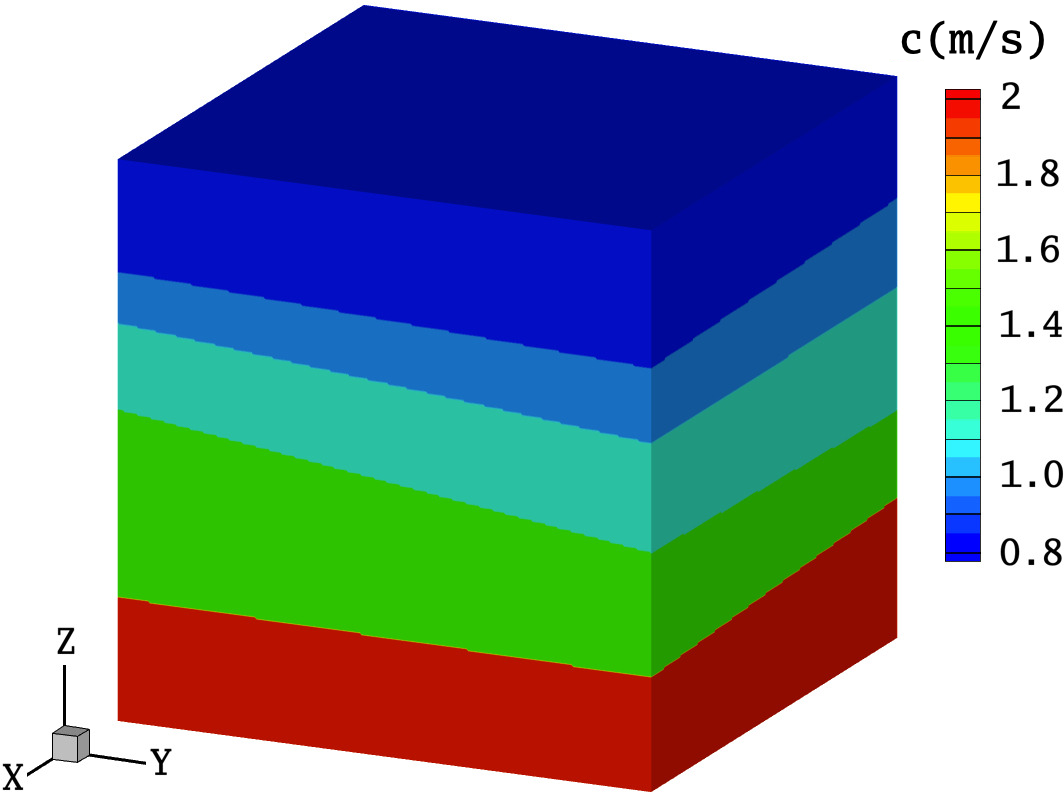}\hspace{1cm}
          \includegraphics[width=.4\textwidth]{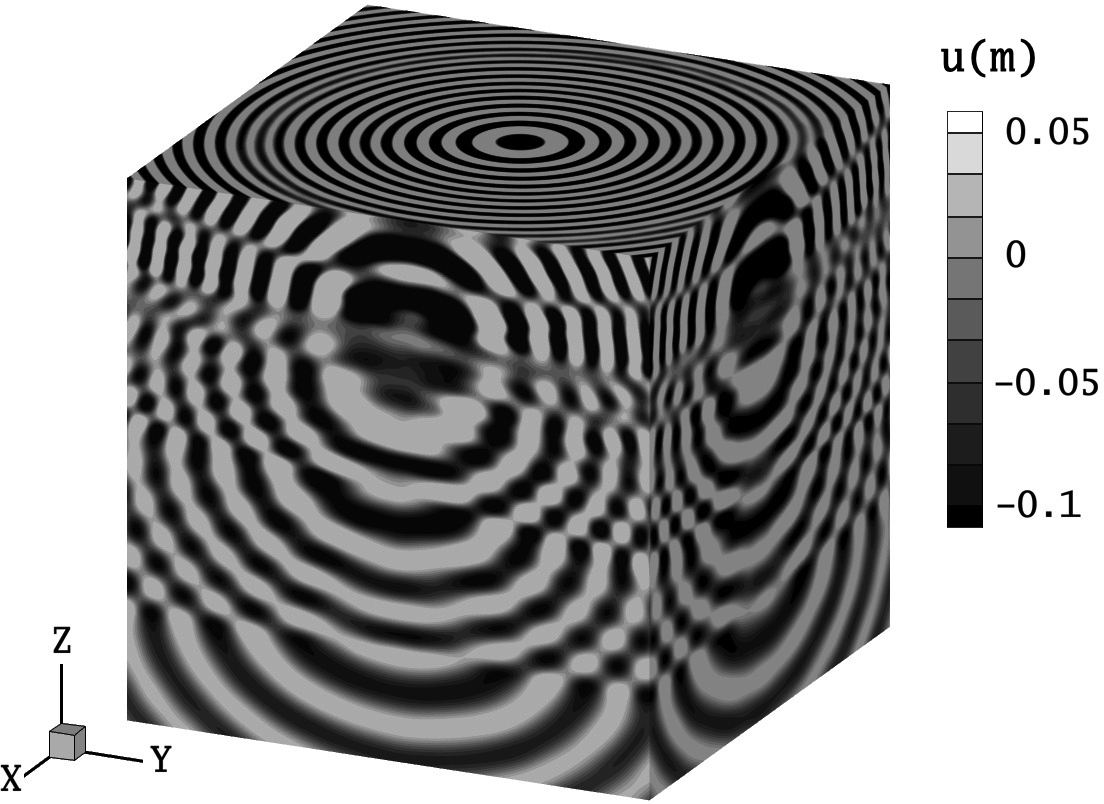} 
	}
	\caption{The velocity profile of the 3D layered media problem (left) and the real part of the approximate solution (right) to the problem of the shot located at (0.539, 0.539, 0.85) with the angular frequency $\omega/2\pi = 25.5 $ on the mesh of size $256^3$.
	}
	\label{fig:layer-3d_vel}
\end{figure}

\begin{table}[ht!]
 \centering 
\begin{tabular}{|r|r|r|r|c|c|c|c|}
\hline

  \mcol{Mesh} & \mrow{$N_1 \times N_2 \times N_3$} & Freq.         & \mrowcol{$N_{\rm RHS}$} & GMRES & \mrowcol{$\Tsolve$ } & \mrowcol{$\Tave$} & \mcol{Weak scaling}\\
  \mcol{Size} &                       & $\omega/2\pi$ &                     & $n_{\text{iter}}$ &                          &                            &  \mcol{efficiency}     \\

  \hline
  \hline 
  64$^3$  & 2 $\times$ 2 $\times$ 2    & 7.69 & 8  & 4 (4) & 261.1 (269.0)   & 32.6 (33.6) & 100\% (100\%) \\
  128$^3$ & 4 $\times$ 4 $\times$ 4    & 13.6 & 20 & 5 (5) & 767.2 (792.4)   & 38.4 (39.6) & 85\% (85\%) \\ 
  192$^3$ & 6 $\times$ 6 $\times$ 6    & 19.5 & 32 & 5 (5) & 1388.3 (1396.2) & 43.1 (43.6) & 75\% (77\%) \\
  256$^3$ & 8 $\times$ 8 $\times$ 8    & 25.5 & 44 & 5 (5) & 1915.1 (1981.5) & 43.5 (45.0) & 75\% (75\%) \\
  320$^3$ & 10 $\times$ 10 $\times$ 10 & 31.4 & 56 & 6 (6) & 2899.0 (2984.1) & 51.8 (53.3) & 63\% (63\%) \\
  \hline 

\end{tabular}
  
  \caption{The performance of the  diagonal sweeping  DDM based on trace transfer vs. source transfer  (shown in brackets) as the  precondtioner for the 3D layered media problem
    when the subdomain problem size is fixed. } \label{tab:iter_const_3D}
\end{table}

\section{Conclusions}

In this paper, we propose and analyze a  trace transfer-based diagonal sweeping DDM for solving the high-frequency Helmholtz equation. Compared 
to the source transfer-based  diagonal sweeping DDM  in \cite{Leng2020}, the basic transfer method is changed from source transfer to trace transfer, and consequently the number of directions of transferred information is greatly reduced, which makes the proposed method easier to analyze and implement and more efficient.
We prove that the proposed method not only produces the exact PML solution in the constant medium case, but also does it with at most one extra round of diagonal sweeps, which lay down the theoretical foundation of the method as direct solver or preconditioner. Extensive numerical experiments in two and three dimensions  are also carried out to demonstrate the performance and parallel scalability (with the pipeline processing) of the proposed method.
On the other hand, there still remain lots of problems worthy of further investigation. Implementations of higher-order discretizations for the diagonal sweeping DDM with source transfer are quite straightforward, but that for the  trace transfer-based one  is quite different, for example, the high-order numerical quadratures for the potential computation with  singular integrands  need to be studied, and special treatments of the quadratures around the crossings of subdomain interfaces also need to be developed. Also
the PML boundary for sweeping DDMs has been shown to be effective, however, the absorption deteriorates for the near-grazing incident waves that come from the source near the boundary, which may cause the efficiency of the sweeping DDMs deteriorates. Another difficulty involving PML emerges when extending the method to Maxwell's equation and the elastic wave equation, that is, the required number of grid points in the PML to reach satisfactory absorption effect is usually too many, which may bring much extra computation for the subdomain problems. Therefore, the development of more effective PML is also a vital direction of our future research.


\section*{References}

\end{document}
